\newtheorem{proposition}{Proposition}[section]
\newtheorem{definition}[proposition]{Definition}
\newtheorem{lemma}[proposition]{Lemma}
\newtheorem{theorem}[proposition]{Theorem}
\newtheorem{corollary}[proposition]{Corollary}
\newcounter{theor}
\newtheorem{teor}[theor]{Theorem}
\theoremstyle{definition}
\newtheorem{remark}[proposition]{Remark}
\newcommand{\vol}{V_n}
\DeclareMathOperator{\kl}{Kl}
\DeclareMathOperator{\Val}{{\bf Val}}
\DeclareMathOperator{\Kl}{Kl}
\DeclareMathOperator{\Gr}{Gr}
\DeclareMathOperator{\spa}{span}
\DeclareMathOperator{\SL}{SL}
\DeclareMathOperator{\GL}{GL}
\DeclareMathOperator{\SO}{SO}
\renewcommand{\O}{\mathrm{O}}
\newcommand{\R}{\mathbb{R}}
\newcommand{\N}{\mathbb{N}}
\newcommand{\sfe}{\mathbb{S}^{n-1}}
\newcommand{\ball}{B^n}
\newcommand{\sfed}{\mathbb{S}^1}
\newcommand{\K}{\mathcal{K}}
\newcommand{\A}{\mathcal{A}}
\newcommand{\di}{\Phi} %Minkowski valuations
\newcommand{\dib}{\Psi} %Second name for Minkowski valuation
\newcommand{\rv}{\mu} %real-valued valuation 
\newcommand{\rvd}{\Phi} %real-valued valuation defined from the support function of a Minkowski valuation. 
\newcommand{\gv}{\varphi} %general valuation
\newcommand{\x}{\overline{x}}
\renewcommand{\v}{\overline{v}}
\newcommand{\pv}{\overline{p(v)}}
\newcommand{\vo}{\overline{v_0}}
\DeclareMathOperator{\MVal}{{\bf MVal}}
\DeclareMathOperator{\MAdd}{{\bf MAdd}}
\DeclareMathOperator{\MEnd}{{\bf MEnd}}
\newcommand{\st}{\mathrm{st}}
\DeclareMathOperator{\V}{\mathrm{V}}
\newcommand{\func}[5]{\ensuremath{\begin{array}{cccl}
#1:&#2&\longrightarrow&#3\\&#4&\mapsto&#5\end{array}}}
\title[Minkowski additive operators under volume constraints]{Minkowski additive operators \\under volume constraints}
\author{Judit Abardia-Ev\'equoz} 
\address{Institut f\"ur Mathematik, Goethe-Universit\"at Frankfurt am Main, 
Robert-Mayer-Str. 10, 60054 Frankfurt, Germany}
\email{abardia@math.uni-frankfurt.de}
\author{Andrea Colesanti} 
\address{Dipartimento di Matematica ``U. Dini'', Viale Morgagni 67/A, 50134 Firenze, Italy}
\email{colesant@math.unifi.it}
\author{Eugenia Saor\'in G\'omez} 
\address{Institut f\"ur Algebra und Geometrie, Universit\"at Magdeburg, 
Universit\"atsplatz 2, 39106 Magdeburg, Germany}
\email{eugenia.saorin@ovgu.de}
\begin{document}

\thanks{The first author is supported by the DFG grant AB 584/1-2. The second author is supported by the FIR project 2013 
``Geometrical and Qualitative aspects of PDEs'', and by the GNAMPA.
The third author is supported by 19901/GERM/15, Fundaci\'on S\'eneca, CARM, Programa de Ayudas a Grupos de Excelencia de la Regi\'on de Murcia}

\date{\today}

\subjclass[2010]{Primary 52B45, %Dissections and valuations 
52A40. %inequalities and extremum problems
Secondary %
52A39, %Mixed volumes and related topics
52A20. %convex sets in n-dim, 
}

\keywords{Minkowski endomorphism, Rogers-Shephard inequality, monotonicity, difference body, $\SO(n)$-equivariance}

\begin{abstract} 
We investigate Minkowski additive, continuous, and translation invariant operators $\di:\K^n\to\K^n$ defined on the family of convex bodies such that the volume of the image $\di(K)$ is bounded from above and below by multiples of the volume of the convex body $K$, uniformly in $K$. 
We obtain a representation result for an infinite subcone contained in the cone formed by this type of operators. Under the additional assumption of monotonicity or $\SO(n)$-equivariance, we obtain new characterization results for the difference body operator.
\end{abstract}

\maketitle

\section{Introduction}

Let $\K^n$ be the space of convex bodies in $\R^n$, i.e., compact convex subsets of $\R^n$, endowed with the usual Minkowski addition. 
An operator $\di:\K^n\longrightarrow\K^n$ is called {\em linear}, or {\em Minkowski additive}, if:
\begin{equation*}\label{linear}
\di(K+L)=\di(K)+\di(L),\quad\forall\, K,L\in\K^n.
\end{equation*}
In this paper we will consider:

\begin{itemize}\itemsep3pt
\item[{\footnotesize \bf{$\MAdd$}}:] Minkowski additive, continuous (with respect to the Hausdorff metric) and translation invariant operators. Their class will be denoted by $\MAdd$;
\item[{\footnotesize \bf{$\MEnd$}}:] Minkowski additive, continuous, and translation invariant operators, which are additionally {\em rotation equivariant}, i.e., 
they commute with rotations of $\R^n$. Following the current notation (see for instance \cite{schneider.book14} or
\cite{dorrek}), we will refer to these operators as {\em Minkowski endomorphisms} and denote their class by $\MEnd$.
\end{itemize}

Minkowski endomorphisms were first introduced in 1974 by Schneider \cite{schneider74}. In fact, in \cite{schneider74} the author considers operators 
$\di:\K^n\longrightarrow\K^n$,  which are continuous, linear, and commute with Euclidean motions. 
Subsequently, Kiderlen  \cite{kiderlen} proved that any operator of this form is, up to the addition of the Steiner point, a Minkowski endomorphism in the sense of the 
previous definition.

\medskip

A significative example of Minkowski endomorphism is given by the so-called \emph{difference body operator} 
\begin{equation}\label{DK}
\func{D}{\K^n}{\K^n}{K}{DK:=K+(-K),}
\end{equation} where + denotes the Minkowski addition and $-K=\{x\in\R^n\,:\,-x\in K\}$. We refer the reader to Section~\ref{preliminaries} for precise definitions and notation. 

\medskip

Since their introduction, Minkowski endomorphisms and their extensions have been widely studied and different representation results have been obtained, 
usually under further geometric assumptions (see, for instance, \cite{kiderlen,schneider74.dim2,schneider74.additive,schuster.07,schuster.10,schuster.wannerer.smooth,schuster.wannerer16}).  We also point out the recent work by Dorrek~\cite{dorrek}, who carried out a deep systematic study of Minkowski 
endomorphisms closing some of the most important conjectures related to them within convex geometry.

\medskip

In this paper we focus on Minkowski additive operators (in $\MAdd$ or in $\MEnd$) verifying a natural {\em affine isoperimetric inequality}, involving the volume. 
Affine isoperimetric inequalities -- i.e.\ those relating two geometric quantities associated to a convex body such that their ratio is invariant under the action of affine 
transformations -- have been largely studied in convex geometry. They turned out to have important applications, for instance in improving well-known inequalities 
in analysis. We refer the reader to the survey \cite{lutwak.handbook} and 
to \cite{cianchi_lyz_2009,haberl.schuster1,haberl.schuster2,haberl.schuster.xiao,lyz_2000,zhang,wang,haddad.jimenez.montenegro,lutwak86} and references therein for some 
recents results and applications.

Our starting point consists of two inequalities of this sort, associated to the difference body:
\begin{equation}\label{RS}
2^n\vol(K)\leq\vol(DK)\leq\binom{2n}{n}\vol(K),\quad\forall K\in\K^n.
\end{equation}
The right-hand inequality is the celebrated \emph{Rogers-Shephard}, or {\em difference body}, inequality proved in \cite{rogers.shephard} 
(see also \cite{chakerian,rogers.shephard58} for other proofs and related inequalities). The left one can be directly obtained from 
the Brunn-Minkowski inequality (see \cite{schneider.book14}). Note that both inequalities are known to be sharp, and equality cases are completely characterized.

Motivated by \eqref{RS}, we introduce the following definition.

\begin{definition}\label{def VC}
Let $\di:\K^n\longrightarrow\K^n$. We say that $\di$ \emph{satisfies a  volume constraint (VC)} if there are constants $c_{\di},C_{\di}>0$ such that
\begin{equation}\tag{VC}
c_{\di}\vol(K)\leq \vol(\di(K))\leq C_{\di}\vol(K),\quad\forall K\in\K^n.
\end{equation}
\end{definition}

Our first result is a characterization of Minkowski endomorphisms verifying a (VC) condition. Roughly speaking, we prove that any such
application is a non-symmetric version of the difference body operator. If in addition we assume that the images of the operator are 
symmetric with respect to the origin (we denote by $\K^n_s$ the class of convex bodies with this property), then the difference body operator is characterized. 

\begin{theorem}\label{+On_dim_geq_3}
Let $n\geq 3$.
\begin{enumerate}
\item[\emph{(i)}] An operator $\di:\K^n\longrightarrow\K^n$ is a Minkowski endomorphism that satisfies (VC) if and only if there are $a,b\geq 0$ with $a+b>0$ such that 
$$
\di K=a(K-\st(K))+b(-K+\st(K)),\quad\forall K\in\K^n.
$$
\item[\emph{(ii)}] An operator $\di:\K^n\longrightarrow\K^n_s$ is a Minkowski endomorphism that satisfies (VC) if and only if there is a $\lambda>0$ such that $\di K=\lambda DK$ for every $K\in\K^n$.
\end{enumerate}
\end{theorem}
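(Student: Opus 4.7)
The plan is to split the proof into a straightforward sufficiency direction and a more substantial necessity direction, where (VC) is used together with the representation theory of Minkowski endomorphisms to pin down the generating measure of $\di$; part (ii) will then be an immediate consequence of (i).

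For the sufficiency direction in (i), I would verify that for $a, b \geq 0$ with $a + b > 0$, the operator $\di K = a(K - \st(K)) + b(-K + \st(K))$ belongs to $\MEnd$: each summand is continuous, Minkowski additive, translation invariant, and $\SO(n)$-equivariant via the analogous properties of the Steiner point, and $\MEnd$ is closed under non-negative linear combinations. For (VC), the lower bound $\vol(\di K) \geq (a+b)^n \vol(K)$ follows from Brunn-Minkowski applied to the two summands, and the upper bound follows from the Rogers-Shephard inequality~\eqref{RS} applied to $K - \st(K)$. Part (ii) follows from (i) at once: if the image of $\di$ lies in $\K^n_s$, then $\di K = -\di K$ substituted into the formula from (i) forces $a = b =: \lambda$, giving $\di K = \lambda D K$.

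For the converse of (i), I would invoke the representation theorem for Minkowski endomorphisms available in dimensions $n \geq 3$ (going back to Schneider and Kiderlen, refined by Schuster and developed in~\cite{dorrek}): every $\di \in \MEnd$ is encoded by a centered zonal signed generating measure $\mu$ on $\sfe$ through spherical convolution $h(\di K, \cdot) = h(K, \cdot) \ast \mu$. Two preliminary observations follow from $\SO(n)$-equivariance and (VC): the action on segments is $\di [-u, u] = \lambda [-u, u]$ for a constant $\lambda \geq 0$ independent of $u \in \sfe$ (using a rotation sending $u$ to $-u$ to force the image to be symmetric), and representing the unit ball as a Minkowski integral of segments gives $\di B^n = \lambda B^n$; the lower bound of (VC) applied to $B^n$ then yields $\lambda > 0$.

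The central step, and the main technical obstacle, is to convert (VC) into a rigidity statement for $\mu$. The strategy is to feed $\di$ probe bodies of the form $K_t = K_0 + t L$ with $K_0$ flat (a disc in $u^\perp$) and $L$ arbitrary: by Minkowski additivity $\di K_t = \di K_0 + t \di L$, and both $\vol(K_t)$ and $\vol(\di K_t)$ expand polynomially in $t$ via mixed volumes. Since $\vol(K_0) = 0 = \vol(\di K_0)$ by (VC), matching the lowest-order coefficients under the sandwich $c_\di \vol(K_t) \leq \vol(\di K_t) \leq C_\di \vol(K_t)$ as $t \to 0^+$ forces the mixed volume $V(\di K_0[n-1], \di L)$ to be pinched between positive multiples of $V(K_0[n-1], L)$ uniformly in $L$. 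Specializing $K_0$ to a disc in $u^\perp$ --- after first verifying via the action on segments that $\di$ preserves this hyperplane --- this translates into the quantitative width-preservation estimate $c'\, w_L(u) \leq w_{\di L}(u) \leq C'\, w_L(u)$ valid for all $L \in \K^n$ and all $u \in \sfe$. The hard part is the final rigidity step: the uniform bound $\int h(L, v) \, d\tilde\mu_u(v) \leq C\, (h(L, u) + h(L, -u))$ (where $\tilde\mu_u$ is the even part of $\mu$ translated to axis $u$) has to hold for every support function, and by testing against ``localized'' support functions peaked near a single $v_0 \ne \pm u$ this forces $\tilde\mu_u$ to be concentrated on the antipodal pair $\{u, -u\}$; the centered odd part of $\mu$ is then determined up to the linear piece that only shifts $\di K$ by the Steiner point --- a rigidity where the hypothesis $n \geq 3$ enters through the structure of zonal measures on $\sfe$. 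Decoding the convolution then gives $\di K = a(K - \st(K)) + b(-K + \st(K))$ with $a + b = \lambda > 0$, completing the proof.
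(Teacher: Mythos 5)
Your sufficiency direction and the deduction of (ii) from (i) are fine, but the necessity argument rests on a representation theorem that is not available. The statement that \emph{every} $\di\in\MEnd$ satisfies $h(\di K,\cdot)=h(K,\cdot)\ast\mu$ for a zonal \emph{signed measure} $\mu$ is not a theorem: Kiderlen's convolution representation by measures requires weak monotonicity, and Dorrek's work (cited in the paper) shows that general Minkowski endomorphisms are only generated by distributions and that endomorphisms which are not weakly monotone do exist. Since weak monotonicity of (VC)-endomorphisms is precisely a consequence of the theorem you are proving, assuming a measure representation at the outset is circular. Moreover, even granting a signed zonal measure, your final rigidity step does not go through as sketched: support functions cannot be ``localized'' near a single direction $v_0$ (the natural test bodies are segments $[0,v_0]$, whose support functions $\langle \cdot,v_0\rangle_+$ are spread over a whole hemisphere), and for a \emph{signed} measure the smallness of the tested integrals does not force concentration on $\{u,-u\}$, because of cancellation --- nonnegativity is exactly the missing ingredient. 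Finally, the width estimate $c'w_L(u)\le w_{\di L}(u)\le C'w_L(u)$ (which is a correct and nice consequence of (VC) plus your observation that $\di$ fixes the hyperplanes $u^\perp$) only sees the \emph{even} part of the generating data; knowing that $D(\di K)$ is a multiple of $DK$ does not determine $\di$, and your claim that ``the centered odd part is then determined up to the linear piece'' is asserted rather than proved.

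Note also that your first preliminary observation already completes the proof along the paper's much shorter route: by Proposition~\ref{general_facts} and Remark~\ref{segment}, (VC) forces $\di$ to map every non-degenerate segment to a non-degenerate segment (indeed you show $\di[-u,u]=\lambda[-u,u]$ with $\lambda>0$), and then Schneider's characterization, stated in the paper as Theorem~\ref{SO_schneider}(ii), immediately gives $\di K=a(K-\st(K))+b(-K+\st(K))$ for $n\ge 3$; part (ii) follows as you say. This is exactly the paper's argument, so the entire convolution machinery --- and with it the step where your proof breaks --- is unnecessary. (For the symmetric case (ii) the paper also gives an alternative harmonic-analysis-flavoured proof, but it uses its own representation Theorem~\ref{thm n ge 2} for operators in $\MAdd^{s,+}$ satisfying (VC), whose maps $\pi_1,\rho_1$ are \emph{constructed} from (VC) rather than taken from a general representation of $\MEnd$.)
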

Here 
$$
\st(K)=\frac{1}{\vol(B^n)}\int_{\sfe} u h(K,u)du
$$ 
denotes the Steiner point of $K$, $B^n$ and $\sfe$ are the unit ball and the unit sphere in $\R^n$, respectively, and $h(K,\cdot)$ is the support function of $K$.
The case $n=2$ is considered in Theorem~\ref{+On_dim_2}, where the result is analogous up to a rotation about the origin.

\smallskip
Another important geometrical property in convex geometry is the monotonicity. An operator $\di:\K^n\longrightarrow\K^n$ is \emph{monotonic} if for every $K,L\in\K^n$ with $K\subseteq L$, then $\di(K)\subseteq\di(L)$. In the context of Minkowski endomorphisms, the more general notion of weak monotonicity turns out to be more relevant. We say that an operator $\di:\K^n\longrightarrow\K^n$ is \emph{weakly monotonic} if for every $K,L$ with $\st(K)=\st(L)=0$ and $K\subseteq L$, the inclusion $\di(K)\subseteq\di(L)$ holds. 
Weakly monotonic endomorphisms were characterized by Kiderlen \cite{kiderlen}, who proved that they are given by certain measures on the sphere. 
Before the recent results by Dorrek \cite{dorrek} came to light, all known Minkowski endomorphism were weakly monotonic. 
On the other hand, Theorem~\ref{+On_dim_geq_3} proves, in particular, that all Minkowski endomorphisms satisfying (VC) are weakly monotonic. 

\smallskip

These two results lead us to our second result which provides a classification of Minkowski additive operators in $\MAdd$ that are monotonic.

\begin{theorem}\label{+mon intro}
Let $n\geq 2$ and $\di\in\MAdd$. Then $\di$ is monotonic and satisfies (VC) if and only if there exists $g\in\GL(n)$ such that 
$$
\di(K)=g(DK),\quad\forall\, K\in\K^n.
$$
\end{theorem}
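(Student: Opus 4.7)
The $\Leftarrow$ direction is a routine check: for $\Phi=g\circ D$ with $g\in\GL(n)$, Minkowski additivity, continuity, translation invariance and monotonicity are inherited from the corresponding properties of $D$ and the linearity of $g$, while (VC) follows from the Rogers--Shephard inequalities \eqref{RS} combined with $\vol(gA)=|\det g|\vol(A)$.

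For the $\Rightarrow$ direction, let $\Phi\in\MAdd$ be monotonic and satisfy (VC). The plan is to use (VC) to force $\Phi$ to be rigid on lower-dimensional bodies, extract a candidate linear map $g\in\GL(n)$ from the action of $\Phi$ on segments, and then extend the identity $\Phi=g\circ D$ from segments to all of $\K^n$.

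\emph{Step 1 (dimension preservation).} Expanding
\[
\vol(K+\varepsilon B^n)=\sum_{j=0}^{n}\binom{n}{j}V(K[n-j],B^n[j])\varepsilon^j
\]
and the analogous expansion for $\vol(\Phi(K)+\varepsilon\Phi(B^n))$, and using $\Phi(K+\varepsilon B^n)=\Phi(K)+\varepsilon\Phi(B^n)$ together with the two-sided (VC), a comparison of leading orders as $\varepsilon\to 0^+$ yields $\dim\Phi(K)=\dim K$ for every $K\in\K^n$. In particular, $\Phi$ maps each segment to a non-degenerate segment.

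\emph{Step 2 (action on segments).} Set $F_v:=\Phi([-v/2,v/2])$. By Step 1, positive homogeneity of $\Phi$, and the inclusion $\{0\}\subseteq[-v/2,v/2]$ (which forces $0\in F_v$ by monotonicity), $F_v$ is a segment through the origin with $F_{\lambda v}=\lambda F_v$ for $\lambda>0$ and $F_{-v}=F_v$. The crucial claim is that $F_v$ is centrally symmetric, $F_v=-F_v$. To prove it, test monotonicity of $\Phi$ on pairs of planar figures with differing Steiner points, for instance $K=[0,e_1]\subseteq L=\conv\{0,e_1,e_1+e_2\}$, and compare $h(\Phi(K),\pm e_1)$ with $h(\Phi(L),\pm e_1)$; the rigidity here is exactly the same as the one that forces $a=b$ in Theorem~\ref{+On_dim_geq_3} when that representation is combined with monotonicity. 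Any asymmetric split $F_v=[-a(v)n(v),b(v)n(v)]$ with $a(v)\neq b(v)$ turns out to be incompatible with such inclusions.

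\emph{Step 3 (extraction of $g$).} Writing $F_v=[-g(v),g(v)]$ determines a map $g\colon\R^n\to\R^n$. Positive homogeneity of $\Phi$ gives $g(\lambda v)=\lambda g(v)$ for $\lambda>0$. Minkowski additivity of $\Phi$ applied to sums of segments (first in parallel, then in non-parallel directions via a zonotope-approximation argument and continuity) yields $g(v+w)=g(v)+g(w)$, so $g$ is linear. Non-degeneracy from Step 1 gives $\ker g=\{0\}$, so $g\in\GL(n)$.

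\emph{Step 4 (extension to $\K^n$).} By Steps 2--3 and Minkowski additivity, $\Phi$ and $g\circ D$ coincide on every zonotope. Since zonotopes are dense in the class of zonoids in the Hausdorff topology, this identity extends to zonoids, and in particular $\Phi(B^n)=2gB^n$. To cover an arbitrary $K\in\K^n$ (whose difference body $DK$ is centrally symmetric but need not be a zonoid when $n\geq 3$), one combines the relation $\Phi(K)+\Phi(-K)=\Phi(DK)$ with the rotation-averaged endomorphism $\bar\Phi(K):=\int_{\SO(n)}\rho^{-1}\Phi(\rho K)\,d\rho\in\MEnd$: $\bar\Phi$ is monotonic, agrees with $(\tfrac{1}{n}\mathrm{tr}\,g)\cdot D$ on zonotopes, and together with Theorem~\ref{+On_dim_geq_3} and the known values of $\Phi$ on $B^n$ this pins down $\Phi(K)=g(DK)$ for every $K\in\K^n$.

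The main obstacle is Step 2, the central symmetry of $F_v$: in the absence of rotation equivariance it is not a priori clear that the image of a segment is centered, and this is where the combined strength of monotonicity and (VC) is decisive. Step 4 is the other delicate point, since difference bodies of general convex bodies in dimension $\geq 3$ are not zonoids and the extension beyond the zonoid class requires the averaging-plus-comparison argument sketched above.
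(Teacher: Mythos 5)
Your reverse direction and Step 1 are fine (Step 1 is essentially Proposition~\ref{general_facts}), but the forward direction has genuine gaps at its two decisive points. Step 2 is the crux and is not proved: the claim that monotonicity plus (VC) force $\di$ to map centered segments to \emph{centered} segments is exactly the hard rigidity of the theorem, and your justification by analogy with Theorem~\ref{+On_dim_geq_3} does not transfer, because that theorem concerns $\SO(n)$-equivariant operators and the representation $a(K-\st(K))+b(-K+\st(K))$ is simply unavailable for a general $\di\in\MAdd$. Inclusions between segments carry no information beyond lengths (a segment is contained in another only if they are parallel), and inclusions such as $[0,e_1]\subseteq\conv\{0,e_1,e_1+e_2\}\subseteq[0,e_1]+[0,e_2]$ involve the unknown body $\di(\conv\{0,e_1,e_1+e_2\})$, so "turns out to be incompatible" is an assertion, not an argument. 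In the paper this evenness/centering is extracted from the Firey--Milman--Schneider theorem (Theorem~\ref{Firey_mon}): each $K\mapsto h(\di K,u)$ is a monotone, translation invariant, $1$-homogeneous real valuation, hence a mixed volume $V(K,\cdot)$ with fixed entries, which is automatically even in $K$ once the entries are segments; you never invoke this result, and without it (or a substitute) Step 2 is open. Step 3 has a related gap: Minkowski additivity of $\di$ on sums of segments only tells you about $\di$ of parallelograms, since $S_v+S_w\neq S_{v+w}$; monotonicity merely gives $g(v+w)\in[-g(v),g(v)]+[-g(w),g(w)]$, and the consistent choice of signs needed to even define $g$ as a map (rather than up to $\pm$) is itself delicate -- this is the content of the paper's function $p$ and the $\epsilon_i$-argument in Proposition~\ref{th monotone weak}.

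Step 4 is also broken as written. If $\di=g\circ D$ on zonotopes, then $h(\bar\di(Z),u)=\int_{\SO(n)}h(DZ,\rho^{-1}g^{T}\rho\,u)\,d\rho$, and Minkowski-averaging the bodies $\rho^{-1}g\rho(DZ)$ is not the same as applying the averaged matrix $\tfrac1n(\mathrm{tr}\,g)\,\mathrm{Id}$: already for $n=2$ and $g=\mathrm{diag}(2,1)$ the average of a segment is a full-dimensional zonoid, so $\bar\di$ neither agrees with $(\tfrac1n\mathrm{tr}\,g)D$ on zonotopes nor satisfies (VC) (and for a rotation $g$ one would get $\mathrm{tr}\,g=0$ while $\bar\di\neq 0$), so Theorem~\ref{+On_dim_geq_3} cannot be applied to it. The paper's route around both obstacles is different: it first proves the even, $o$-symmetrizing case (Proposition~\ref{th monotone weak}) using Klain's injectivity theorem together with Theorem~\ref{Firey_mon}, then reduces the general case by applying that result to $\dib(K)=D(\di(DK))$, and finally uses Theorem~\ref{Firey_mon} once more, with a dimension argument on segments orthogonal to $g^{-1}(u)$, to identify $h(\di K,u)$ with a mixed volume of the form $\tilde c\,V(K,S_2^u,\dots,S_n^u)$ and hence with $h(g(DK),u)$ on all of $\K^n$ -- no zonoid-density or averaging step is needed. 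If you want to salvage your outline, you need a genuine proof of Step 2 and a replacement for Step 4; importing Theorem~\ref{Firey_mon} is the natural way to do both.
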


\medskip

Nowadays, Minkowski additive operators are usually treated as a special case of {\em Minkowski valuations}. 
A Minkowski valuation is an operator $\di:\K^n\longrightarrow\K^n$ such that 
$$\di(K)+\di(L)=\di(K\cup L)+\di(K\cap L),\quad\forall K,L\in\K^n,\,K\cup L\in\K^n,$$
where $+$ denotes Minkowski addition in both sides of the equality.  

The relation between Minkowski additive operators and Minkowski valuations was first studied (under different names) by Spiegel~\cite{spiegel}, who proved 
that every continuous, translation invariant, and $1$-homogeneous Minkowski valuation is Minkowski additive. 
We recall that an operator $\di: \K^n\longrightarrow \K^n$ is said to be $1$-homogeneous if $\di(\lambda K)=\lambda K$ for all $\lambda> 0$.
From the relation 
\begin{equation}\label{id is val}
K+L=K\cup L+K\cap L,
\end{equation} 
which holds for every $K,L,K\cup L\in\K^n$ (see, e.g., \cite[Lemma 3.1.1]{schneider.book14}), it follows directly that every Minkowski additive 
operator is a Minkowski valuation.  
In this sense, Schneider's study of Minkowski endomorphisms in \cite{schneider74,schneider74.dim2} can be considered as the starting point of the investigation of  Minkowski valuations. In~\cite{schneider74}, various classification results for Minkowski endomorphisms are obtained by adding different geometric properties, such as preserving some precise intrinsic volume, or prescribing the image of a segment (see Theorem~\ref{SO_schneider}). Another remarkable result by Schneider~\cite{schneider74.additive} classifies the operators between convex bodies that preserve the volume. This result can be interpreted as the first one considering the (VC) condition in  the special case that both constants are equal to one. 
\begin{teor}[\cite{schneider74.additive}]
Let $n\geq 2$. An operator $\di:\K^n\longrightarrow\K^n$ is Minkowski additive and satisfies $\vol(\di(K)) = \vol(K)$ for every $K\in\K^n$ if and only if there exist $\alpha\in\SL(n)$ and a Minkowski additive map $t:\K^n\longrightarrow\R^n$ such that $\di(K)=\alpha K+t(K)$ for every $K\in\K^n$.
\end{teor}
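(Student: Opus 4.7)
The sufficiency direction is immediate: if $\di(K) = \alpha K + t(K)$ with $\alpha \in \SL(n)$ and $t:\K^n \to \R^n$ Minkowski additive, then $\vol(\di(K)) = \vol(\alpha K) = |\det \alpha|\,\vol(K) = \vol(K)$, and $\di(K+L) = \alpha(K+L) + t(K+L) = \di(K) + \di(L)$.

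For the necessity, my plan is to extract the candidate linear map $\alpha$ from the action of $\di$ on segments and then transport this information to arbitrary convex bodies via a mixed-volume identity obtained by polarizing volume preservation. As a preliminary step, the relation $\di(K + \{0\}) = \di(K)$ together with Minkowski additivity yields $\di(\{0\}) = \{0\}$, and then $\di(\{x\}) + \di(\{-x\}) = \{0\}$ forces each $\di(\{x\})$ to be a singleton, say $\{f(x)\}$, with $f:\R^n \to \R^n$ additive. This identifies the translational part of $\di$ on points.

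Next, I would analyse $\di$ on segments. Setting $C(x) := \di([0,x])$, Minkowski additivity along a common ray gives $C(\lambda x) = \lambda C(x)$ for every rational $\lambda \geq 0$. Applying $\di$ to a Minkowski combination $\sum \lambda_i [0,x_i]$ of $n$ linearly independent segments, expanding both sides of $\vol(\di(\sum \lambda_i [0,x_i])) = \vol(\sum \lambda_i [0,x_i])$ as polynomials in the rational parameters $\lambda_i > 0$, and matching coefficients yields the mixed-volume identity
\[
\V\bigl(C(x_1),\ldots,C(x_n)\bigr) = \V\bigl([0,x_1],\ldots,[0,x_n]\bigr) = \tfrac{1}{n!}|\det(x_1,\ldots,x_n)|.
\]
Combined with the homogeneity of $C$ along rays, this rigidity should force each $C(x)$ to be a segment whose displacement vector depends linearly on $x$ through a map $\alpha \in \GL(n)$ with $|\det \alpha|=1$, i.e.\ $\alpha \in \SL(n)$.

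Finally, to extend the formula to all of $\K^n$, I would upgrade volume preservation to preservation of all mixed volumes, obtained again by polarizing $\vol(\di(\sum \lambda_i K_i)) = \vol(\sum \lambda_i K_i)$:
\[
\V(\di(K_1),\ldots,\di(K_n)) = \V(K_1,\ldots,K_n) \qquad \forall\,K_i \in \K^n.
\]
For a zonotope $Z$, Minkowski additivity together with the segment analysis gives $\di(Z) = \alpha Z + \text{(point)}$. Setting $K_1 = K$ arbitrary and letting $K_2 = \cdots = K_n = Z$ range over a sufficiently rich family of zonotopes, the mixed-volume identity forces the surface area measure of $\alpha^{-1} \di(K)$ to coincide with that of $K$; Minkowski's uniqueness theorem then yields $\di(K) = \alpha K + t(K)$ for some $t(K) \in \R^n$, and the Minkowski additivity of $t$ is inherited from that of $\di$. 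The main obstacle is precisely this last step: without an a~priori continuity assumption on $\di$, one cannot pass from segments/zonotopes to arbitrary bodies by direct approximation, and the polarized mixed-volume identity paired with Minkowski's uniqueness theorem is the key rigidity tool that substitutes for continuity.
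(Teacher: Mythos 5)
You should note first that the paper does not prove this statement at all: it is quoted as background from Schneider's work \cite{schneider74.additive}, so your proposal must stand on its own, and as written it has two genuine gaps in the necessity direction. The first is the segment step. From $C(\lambda x)=\lambda C(x)$ for rational $\lambda\ge 0$ and the identities $V(C(x_1),\dots,C(x_n))=\tfrac{1}{n!}|\det(x_1,\dots,x_n)|$ you still have to prove (a) that each $C(x)$ is a segment, (b) that its displacement vector is given by a single linear map, and (c) that the signs of these displacement vectors, which the segments determine only up to $\pm1$, can be chosen coherently so as to produce one $\alpha\in\SL(n)$: observe that $d(x)=\epsilon(x)\,x$ with an arbitrary sign function $\epsilon$ constant on rays satisfies all your determinant identities, so those identities alone cannot force linearity of a chosen displacement map. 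Since $[0,x+y]\neq[0,x]+[0,y]$, Minkowski additivity does not give additivity of the displacement in $x$ either. The analogous sign-selection problem is handled in this paper (Proposition~\ref{p} and Claim 2 in the proof of Proposition~\ref{th monotone weak}) only by exploiting the continuity of $\di$, which is not available in Schneider's theorem, so this step needs a genuinely different argument, not just the word ``rigidity''.

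The second, more serious, gap is the final step. The polarization $V(\di K_1,\dots,\di K_n)=V(K_1,\dots,K_n)$ is fine (rational dilations suffice to compare the two polynomials), but testing it with one slot equal to $K$ and the remaining slots filled by zonotopes cannot recover the surface area measure of $\alpha^{-1}\di(K)$. Mixed volumes of a body with $n-1$ segments compute widths, i.e.\ support values of the difference body (this is exactly Corollary~\ref{th repres D_v}), and the surface area measure of every zonotope is an even measure; hence every test of the form $V(\,\cdot\,,Z[n-1])$ or $V(Z,\,\cdot\,[n-1])$ with $Z$ a zonotope sees only even data. What your argument yields is $D(\alpha^{-1}\di K)=DK$, equivalently equality of the \emph{even parts} of the surface area measures, not $S_{n-1}(\alpha^{-1}\di K,\cdot)=S_{n-1}(K,\cdot)$; Minkowski's uniqueness theorem therefore cannot be invoked, and a convex body is not determined up to translation by its difference body (a Reuleaux triangle and a disc of the same width already share theirs). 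So the proof stops short of $\di(K)=\alpha K+t(K)$ precisely at the step you identified as the crux. Closing it requires extra input on non-symmetric bodies -- for instance exploiting the full identities $V(\di K[j],\di L[n-j])=V(K[j],L[n-j])$ for arbitrary $L$ and higher multiplicities $j$, or an equality-case analysis in a Minkowski-type inequality -- which is the real work done in Schneider's original proof and is absent here.
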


A systematic study of Minkowski valuations was started by Ludwig~\cite{ludwig02,ludwig05} at the beginning of this century. We point out her characterization of the difference body operator.
\begin{teor}[\cite{ludwig05}]\label{t: ludwig class DK} Let $n\geq 2$. An operator $\di: \K^n\longrightarrow\K^n$ is a continuous, translation invariant, and 
$\SL(n)$-covariant Minkowski valuation if and only if there is a $\lambda\geq 0$ such that $\di(K)=\lambda DK$ for every $K\in\K^n$.
\end{teor}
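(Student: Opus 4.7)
\emph{Proof plan.} The ``if'' direction amounts to checking that $D$, and hence $\lambda D$ for $\lambda\ge 0$, has all the required properties. Both $K\mapsto K$ and $K\mapsto -K$ satisfy the valuation identity by \eqref{id is val} (for the second, $\cup$, $\cap$ and Minkowski addition all commute with central reflection), and Minkowski sums of Minkowski valuations are Minkowski valuations, so $D=K+(-K)$ is one. Translation invariance is clear since the translations cancel, $D(K+t)=K+t+(-K-t)=DK$; $\SL(n)$-covariance follows from $D(\alpha K)=\alpha K+\alpha(-K)=\alpha DK$; continuity is obvious. For the converse, the plan is a stepwise reduction: first isolate the $1$-homogeneous component, then reduce to a reference simplex via $\SL(n)$-covariance and $1$-homogeneity, and finally pin down $\di$ on that simplex using the valuation identity on subdivisions.

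To reach $1$-homogeneity, fix $u\in\sfe$ and observe that $\phi_u(K):=h(\di(K),u)$ is a real-valued, continuous, translation invariant valuation on $\K^n$. McMullen's decomposition then gives $\phi_u=\sum_{i=0}^n\phi_{u,i}$ with each summand $i$-homogeneous, and uniqueness of the decomposition transfers $\SL(n)$-covariance to every piece: $\phi_{u,i}(\alpha K)=\phi_{\alpha^T u,i}(K)$ for all $\alpha\in\SL(n)$. I would then rule out the indices $i\neq 1$ one by one. For $i=0$, $\phi_{u,0}$ is independent of $K$ and, by transitivity of $\SL(n)$ on $\sfe$, also independent of $u$; the corresponding Euclidean ball summand of $\di(K)$ must vanish under $\SL(n)$-covariance. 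The case $i=n$ is analogous, using $\SL(n)$-invariance of $\vol$. For the intermediate indices $2\le i\le n-1$, inserting a non-compact one-parameter subgroup such as $\alpha_t=\mathrm{diag}(t,t^{-1},1,\dots,1)$ into the covariance identity produces growth rates in $t$ which, confronted with the polynomial-in-support-function structure of $i$-homogeneous translation invariant valuations, are incompatible for all $u$ simultaneously.

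Once $\di$ is $1$-homogeneous, continuous, translation invariant and $\SL(n)$-covariant, the combination of $\SL(n)$-covariance with $1$-homogeneity covers all $\GL(n)$-images, so (together with translation invariance) it suffices to identify $\di$ on the single reference simplex $T=\conv\{0,e_1,\dots,e_n\}$; the desired claim is that $\di(T)=\lambda DT$ for some $\lambda\ge 0$. I would combine the valuation identity applied to carefully chosen dissections of $T$ into sub-simplices with $\SL(n)$-covariance (and $1$-homogeneity) relating each sub-simplex to a rescaled copy of $T$; the interplay of these relations should produce enough linear constraints on $h(\di(T),\cdot)$ to force it to be a multiple of $h(DT,\cdot)$. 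The identity $\di(P)=\lambda DP$ then extends from simplices to polytopes by iterating the valuation identity on triangulations, and from polytopes to all of $\K^n$ by continuity. The decisive difficulty I foresee is exactly this simplex-level step: the stabilizer of $T$ inside $\SL(n)\ltimes\R^n$ is only the alternating group $A_{n+1}$, which is too small to determine $\di(T)$ by invariance alone (for instance, in $\R^2$ it leaves invariant a whole family of $3$-fold-symmetric bodies), so one must exploit the valuation identity and $\SL(n)$-covariance across non-equivalent pieces in a delicate way to exclude asymmetric summands of the form $aT+b(-T)$ with $a\neq b$.
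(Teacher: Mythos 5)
A preliminary remark: the paper does not prove this statement at all; it is quoted from Ludwig \cite{ludwig05}, so your proposal has to be measured against Ludwig's argument rather than anything in this text. Your ``if'' direction is fine, as is the reduction to the real-valued valuations $\phi_u=h(\di(\cdot),u)$, the transfer of covariance to the McMullen components, and the elimination of the degrees $i=0$ and $i=n$ (small slip: $\SL(n)$ does not act on $\sfe$; what you actually use is transitivity on $\R^n\setminus\{0\}$ together with the $1$-homogeneity of $u\mapsto\phi_{u,i}(K)$, which kills these two components outright --- no ``ball summand'' is involved, since $\SL(n)$-invariance would not single out a ball anyway). The first genuine gap is the elimination of the intermediate degrees $2\le i\le n-1$: this is only a heuristic. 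The spaces $\Val_i$ are infinite-dimensional for $1\le i\le n-1$, a general element has no ``polynomial-in-support-function structure'' to confront with growth rates, and even for mixed-volume valuations $K\mapsto V(K[i],A[n-i])$ the behaviour under $\mathrm{diag}(t,t^{-1},1,\dots,1)$ can be anything between $t^{-i}$ and $t^{i}$ depending on $A$, so homogeneity degree is not detected by such growth. As written this step is not a proof; it is telling that Ludwig does not separate off homogeneity first at all --- in her argument it emerges from the same functional equations that determine the valuation on simplices.

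The second gap is the one you flag yourself, and it is not a deferrable technicality: the simplex-level computation \emph{is} Ludwig's proof. One dissects $T=\conv\{0,e_1,\dots,e_n\}$ by suitable hyperplanes, uses the valuation identity together with $\SL(n)$-covariance and translation invariance to map the pieces back to affine images of $T$, and derives Cauchy-type functional equations for $h(\di(T),\cdot)$ whose solutions (using continuity) force $\di(T)=aT+b(-T)$ up to translation; translation invariance then yields $a=b$, i.e.\ the symmetric combination $\lambda DT$. Your observation that the stabilizer of $T$ in $\SL(n)\ltimes\R^n$ is too small correctly explains why invariance alone cannot do this, but the proposal offers no substitute for the functional-equation analysis, so the asymmetric candidates $aT+b(-T)$ with $a\neq b$ are never actually excluded at the level where they must be. Finally, the extension from simplices to polytopes cannot be done by ``iterating the valuation identity'' inside $\K^n$, because Minkowski addition has no subtraction; it must be carried out on the real-valued valuations $h(\di(\cdot),u)$ via the standard inclusion--exclusion and extension results for valuations on polytopes, after which continuity gives all of $\K^n$. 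In sum, the easy reductions are correct, but both pivotal steps (intermediate degrees and the simplex determination) are missing, so the proposal does not yet constitute a proof.
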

We recall that  $\di: \K^n\longrightarrow\K^n$ is said to be {\em covariant} with respect to a group $G$ of transformations of 
$\R^n$ if $\di(g(K))=g(\di(K))$ for every $K\in\K^n$ and $g\in G$.

After these seminal papers of Ludwig, Minkowski valuations have been deeply studied and characterization results for other operators, for other groups of transformations and for 
certain subfamilies of $\K^n$ have been obtained. Some of these results can be found in 
\cite{kiderlen,schuster.wannerer.smooth,schuster.wannerer16,haberl,schuster.10,schuster.wannerer12,wannerer.equiv,schneider_schuster06,parapatits_wannerer} 
and references therein. 

Apart from the mentioned characterization results of the difference body operator, by Schneider and Ludwig, more recent results in this direction have been proven. For instance, 
Gardner, Hug, and Weil~\cite{gardner.hug.weil1}, managed to remove the Minkowski valuation property in Theorem~\ref{t: ludwig class DK}, although adding the condition of homogeneity. This 
result was obtained in the much more general context of classifying operations between convex bodies (see also \cite{bianchi.gardner.gronchi,gardner.hug.weil2, milman.rotem}). 
Other characterization results for the difference body were established in \cite{abardia.saorin2}, by using the notion of satisfying a (VC) condition and the $\GL(n)$-covariance.

\smallskip
We denote by $\MAdd^{s,+}$ the subset of those operators of $\MAdd$ that verify $\di(-K)=\di(K)=-\di(K)$ for every $K\in\K^n$. 
The above characterization results, Theorem~\ref{+On_dim_geq_3} and~\ref{+mon intro}, are obtained as consequence of a representation result for 
operators $\di\in\MAdd^{s,+}$ that satisfy (VC).
The corresponding result in dimension higher than $2$ will be stated in Section~\ref{s: representation formulae}, as it requires further notation
and additional definitions.

\begin{theorem}\label{thm n=2} 
Let $n=2$. An operator $\di\in\MAdd^{s,+}$ satisfies (VC) if and only if there exist
$\rho\,:\,\Gr(2,1)\longrightarrow\R$, continuous and strictly positive, and $\pi\,:\,\Gr(2,1)\longrightarrow\Gr(2,1)$, bijective and bi-Lipschitz, such that for every $K\in\K^2$,
$$h(\di(K),u)=\int_{\sfed}
\rho(\v) V_1([-u,u]|\pi(\v))
dS_1(K,v),\quad\forall\, u\in\sfed.
$$
Here $S_1(K,\cdot)$ denotes the surface area measure of $K\in\K^n$, $V_1(L)$ denotes the length of the 1-dimensional convex body $L$, and $\overline{v}$ denotes the vector obtained from a $\pi/2$-counterclockwise rotation of $v$.
\end{theorem}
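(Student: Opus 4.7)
The strategy is to determine $\di$ on the one-parameter family of segments $[-\overline{v},\overline{v}]$ and then recover $\di$ on arbitrary $K$ via the two-dimensional Matheron/zonoid representation for centrally symmetric bodies, reading off the formulas for $\rho$ and $\pi$. The converse direction is routine: given a continuous positive $\rho$ and a bi-Lipschitz bijection $\pi$, the integral formula manifestly produces a translation-invariant, continuous, Minkowski additive operator sending each $K$ to a centrally symmetric body (the integrand is even in $u$); the condition $\di(-K)=\di(K)$ follows because the integrand is invariant under $v\mapsto-v$, as $\rho$ and $\pi$ descend to $\Gr(2,1)$. Since every $\di(K)$ is a zonoid in $\R^2$, both $\vol(\di(K))$ and $\vol(K)$ can be written as double integrals against $|\sin\angle(\cdot,\cdot)|$ applied to $S_1(K,\cdot)$, and the two-sided (VC) bound reduces to the comparison $|\sin\angle(\pi(\overline{v_1}),\pi(\overline{v_2}))|\asymp|\sin\angle(\overline{v_1},\overline{v_2})|$, which is exactly the bi-Lipschitz content for $\pi$ together with $\rho$ being bounded above and below on the compact space $\Gr(2,1)$.

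For the forward direction the first step is to apply $\di$ to segments. Since $\vol([-\overline{v},\overline{v}])=0$, the upper (VC) bound forces $\vol(\di([-\overline{v},\overline{v}]))=0$, and the symmetry $\di(K)=-\di(K)$ gives $\di([-\overline{v},\overline{v}])=[-q(v),q(v)]$ for some $q(v)\in\R^2$. To rule out $q(v)=0$, I would test (VC) on $K_\varepsilon:=[-\overline{v},\overline{v}]+\varepsilon B^2$: by the planar Steiner formula $\vol(K_\varepsilon)$ grows linearly in $\varepsilon$, whereas $\di(K_\varepsilon)=[-q(v),q(v)]+\varepsilon\,\di(B^2)$ would have area of order $\varepsilon^2$ when $q(v)=0$, contradicting the lower (VC) bound as $\varepsilon\to 0^+$. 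Writing $q(v)=\alpha(v)w(v)$ with $\alpha(v)>0$ and $w(v)\in\sfed$, the continuity of $\di$ combined with $q(v)\neq 0$ gives continuity of $\alpha$ and of the line $\spa(w(v))$; since $[-\overline{-v},\overline{-v}]=[-\overline{v},\overline{v}]$, we have $q(-v)=q(v)$, so one may define $\rho(\overline{v}):=\alpha(v)/8$ and $\pi(\overline{v}):=\spa(w(v))$ as continuous functions on $\Gr(2,1)$ with $\rho>0$.

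Next, Minkowski additivity together with $\di(-K)=\di(K)$ gives $\di(DK)=\di(K)+\di(-K)=2\di(K)$, hence $\di(K)=\di(\tfrac12 DK)$, reducing the problem to centrally symmetric bodies. For any centrally symmetric $L\in\K^2$, the two-dimensional Matheron representation reads $L=\tfrac14\int_{\sfed}[-\overline{v},\overline{v}]\,dS_1(L,v)$ in the Minkowski sense, and Minkowski additivity and continuity of $\di$ let it commute with this integral, producing $h(\di(L),u)=\tfrac14\int\alpha(v)|\langle u,w(v)\rangle|\,dS_1(L,v)$. Taking $L=\tfrac12 DK$ and using $S_1(\tfrac12 DK,\cdot)=\tfrac12(S_1(K,\cdot)+S_1(-K,\cdot))$ together with the evenness of $v\mapsto\alpha(v)|\langle u,w(v)\rangle|$ yields $h(\di(K),u)=\tfrac14\int\alpha(v)|\langle u,w(v)\rangle|\,dS_1(K,v)$; rewriting $|\langle u,w(v)\rangle|=\tfrac12\,V_1([-u,u]\mid\pi(\overline{v}))$ produces the claimed identity.

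Both the bijectivity and the bi-Lipschitz property of $\pi$ are extracted from (VC) on the parallelograms $P_{v_1,v_2}:=[-\overline{v_1},\overline{v_1}]+[-\overline{v_2},\overline{v_2}]$, which satisfy $\vol(P_{v_1,v_2})=4|\sin\angle(v_1,v_2)|$, while $\di(P_{v_1,v_2})=[-q(v_1),q(v_1)]+[-q(v_2),q(v_2)]$ is a (possibly degenerate) parallelogram of area $4\alpha(v_1)\alpha(v_2)|\sin\angle(w(v_1),w(v_2))|$. Injectivity of $\pi$ is then immediate: otherwise $\di(P_{v_1,v_2})$ would collapse to a segment while $P_{v_1,v_2}$ is non-degenerate, violating the lower (VC) bound; surjectivity follows from continuity and the topology $\Gr(2,1)\cong\sfed/\{\pm 1\}$. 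The two-sided (VC) inequality on $P_{v_1,v_2}$ gives
\[
c\cdot|\sin\angle(v_1,v_2)|\le\alpha(v_1)\alpha(v_2)|\sin\angle(w(v_1),w(v_2))|\le C\cdot|\sin\angle(v_1,v_2)|,
\]
and the uniform bounds $0<\inf\alpha\le\sup\alpha<\infty$ on the compact $\Gr(2,1)$ turn this into comparability of the intrinsic geodesic distances between $\overline{v_1},\overline{v_2}$ and their images under $\pi$, i.e.\ the bi-Lipschitz property. I expect this last transfer step to be the most delicate part, since it relies on the uniform lower bound on $\alpha$ afforded by the compactness of $\Gr(2,1)$ and the continuity of $\alpha$.
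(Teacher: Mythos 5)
Your argument is correct in substance, but it takes a genuinely different, planar-specific route from the paper's. The paper first develops dimension-free machinery: for any $\di\in\MAdd$ satisfying (VC) it constructs the maps $\pi_k$ on all Grassmannians and proves bijectivity and the bi-Lipschitz property in Theorem~\ref{teo Grassmannian}, and it then obtains the representation by comparing Klain functions of the even $1$-homogeneous valuations $h(\di(\cdot),u)$ via Klain's injectivity theorem (Theorem~\ref{klain_injectivity}, Proposition~\ref{Klain map}). You instead stay in the plane: you determine $\di$ on segments directly (your $K_\varepsilon=[-\overline{v},\overline{v}]+\varepsilon B^2$ argument replaces Proposition~\ref{general_facts} in the only case needed), reduce to symmetric bodies via $\di K=\di(\tfrac12 DK)$, and recover $\di$ from the planar zonoid representation $h(L,u)=\tfrac14\int_{\sfed}|\langle u,\overline{v}\rangle|\,dS_1(L,v)$ by letting $\di$ pass inside the integral; bijectivity and the bi-Lipschitz property of $\pi$ are extracted afterwards from (VC) tested on parallelograms, with surjectivity coming from the circle topology of $\Gr(2,1)$ (a continuous injection of $\sfed/\{\pm1\}$ into itself is onto), and the metric transfer is unproblematic because the angle between lines lies in $[0,\pi/2]$, so $\sin\alpha$, $\alpha$, and the paper's distance $2\sin(\alpha/2)$ are comparable with absolute constants. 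What the paper's route buys is machinery valid for all $n$ and $k$, reused for Theorem~\ref{thm n ge 2} and for the monotone and equivariant cases; what your route buys is a shorter, self-contained proof of the two-dimensional statement that avoids both Theorem~\ref{teo Grassmannian} and Klain's theorem. Your converse is essentially the paper's (area comparison for sums of segments plus Rogers--Shephard), packaged through the double-integral formula for the area of a planar zonoid rather than the parallelogram--zonotope--zonoid approximation.

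Two steps need one more line each, though neither is a real gap. First, ``$\di$ commutes with the integral'' should be justified: approximate the symmetric body by symmetric zonotopes, use additivity and $1$-homogeneity there, and pass to the limit using continuity of $\di$, weak continuity of $K\mapsto S_1(K,\cdot)$, and continuity of the integrand $v\mapsto\alpha(v)|\langle u,w(v)\rangle|$ (which you have established); this is the planar substitute for the paper's appeal to Theorem~\ref{klain_injectivity}. Second, in the converse, $\vol(K)$ \emph{equals} a double integral of $|\sin\angle(\cdot,\cdot)|$ against $S_1(K,\cdot)\times S_1(K,\cdot)$ only when $K$ is centrally symmetric; for general $K$ that double integral computes a constant multiple of $\vol(\tfrac12 DK)$, so the two-sided comparison with $\vol(K)$ invokes the planar difference-body inequalities \eqref{RS} -- exactly the paper's final reduction via $\di K=\tfrac12\di(DK)$, which you should state explicitly there as well.
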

The proof of this representation result, as well as its higher dimensional analogue relies
on the existence of  bi-Lipschitz bijections on $\Gr(n,k)$ for every $1\leq k\leq n-1$, associated to every $\di\in\MAdd$ that satisfies (VC).

\medskip
The paper is organized as follows. 
In Section~\ref{preliminaries}, we recall the basic concepts in convex geometry and theory of valuations that will be used throughout the paper. 
In Section~\ref{section 1-homogeneous} we investigate properties of operators from $\MAdd$, focusing in particular on their interaction 
with Grassmannians and proving that they preserve dimensions. 
In Section~\ref{s: representation formulae} we obtain representation formulae for Minkowski additive operators that satisfy the (VC) 
condition and prove Theorem~\ref{thm n=2}. 
Theorem~\ref{+mon intro} is proved in Section~\ref{sec: 1h and mon}. 
The last section is devoted to the characterization contained in Theorem~\ref{+On_dim_geq_3}, of Minkowski endomorphisms that satisfy a (VC) condition.

\section{Preliminaries}\label{preliminaries}

\subsection{Notation}
We work in the $n$-dimensional Euclidean space $\R^n$, equipped with usual scalar product $\langle\cdot,\cdot\rangle$ and norm $\|\cdot\|$. If $A\subset\R^n$ is a measurable set, $\vol(A)$ denotes its volume, that is, its $n$-dimensional Lebesgue measure. The notation
$\sfe$ and $\ball$ stands for the unit sphere and the unit ball (centered at the origin) of $\R^n$, respectively. The standard basis of $\R^n$ is 
denoted by $\{e_1,\dots,e_n\}$. For $x\in\R^n$, we denote by $S_x:=[-x,x]$ the line segment joining $-x$ and $x$. 
For $x=(x_1,x_2)\in\R^2\setminus\{0\}$, we denote by $\x\in\R^2$, the orthogonal vector to $x$ given by $(-x_2,x_1)$.

As usual, $\Gr(k,n)$ denotes the Grassmannian of linear $k$-dimensional subspaces of $\R^n$.
For $A\subset \R^n$, we denote by $A|E$ the {\em orthogonal projection} of the set $A$ onto $E\in\Gr(k,n)$.
If $E$ is a linear subspace of $\R^n$, we denote by $E^{\perp}$ the orthogonal complement of $E$ in $\R^n$.
We write $\GL(n)$ and $\SL(n)$ to denote the general linear and special linear groups in $\R^n$. By $\O(n)$ we denote 
the group of orthogonal transformations of $\R^n$ and by $\SO(n)\subset\O(n)$ the orthogonal transformations which preserve orientation.

If $A\subset\R^n$, then $\spa A$, the span of $A$, is the vector subspace of $\R^n$ parallel to the affine subspace of smallest dimension containing $A$. 
The \emph{dimension} of a set $A\subset\R^n$, $\dim A$, is defined as the dimension of $\spa A$.

\subsection{Convex bodies}
Next we recall some notions from convex geometry, which will be used throughout the paper. Our reference text for this part is the monograph \cite{schneider.book14}
by Schneider. We refer the reader also to the books \cite{gardner.book06,gruber.book,artstein.giannopoulos.milman} for different perspectives on this subject.

We denote by $(\K^n,+)$ the set of convex bodies (compact and convex sets) in $\R^n$, endowed with the usual Minkowski addition:
$$
K+L:=\{x+y\,:\, x\in K,\, y\in L\}.
$$
The topology that we consider on $\K^n$ is the one induced by the Hausdorff distance.
 For $K,L\in\K^n$, the \emph{Hausdorff distance} between $K$ and $L$ is given by 
 $$d_H(K,L)=\max\{\min\{\epsilon>0\,:\,K\subset\epsilon L\},\min\{\epsilon>0\,:\,L\subset\epsilon K\}\}.$$

By $\K^n_s$, we denote the set of convex bodies in $\R^n$ which are symmetric with respect to the origin. We call \emph{$o$-symmetric} bodies the elements of $\K^n_s$.

The \emph{support function} $h(K,\cdot)\,:\,\R^n\longrightarrow\R$ of a convex body $K\in\K^n$ is given by
$$
h(K, v)=\max \{ \langle v, x\rangle : x \in K \}.
$$
This is a 1-homogeneous convex function in $\R^n$ which determines uniquely $K$ (\cite[Theorem 1.7.1]{schneider.book14}). Moreover, for any $u\in\R^n$, the function $h(\cdot,u):\K^n\longrightarrow \R$ is additive with respect to the Minkowski addition and
positively homogeneous, namely,
\begin{equation}\label{additivity of h in K}
h(\alpha K+\beta L,\cdot)=\alpha h(K,\cdot)+\beta h(L,\cdot),\quad\forall\, K,L\in\K^n,\, \forall\, \alpha,\beta\ge0.
\end{equation}
Further, for any $K\in\K^n$, $g\in \GL(n)$, and $u\in\R^n$,
\begin{equation}\label{supp func g^t}
h(gK,u)=h(K,g^Tu),
\end{equation}
where $g^T$ denotes the transpose of the matrix $g\in\GL(n)$. 

We note that Hausdorff topology on $\K^n$ is equivalent to the uniform convergence topology in the set of support functions (see, e.g., \cite[Lemma 1.8.14]{schneider.book14}).

\medskip
A {\em zonoid} is a convex body which can be approximated by finite sums of line segments.
A convex body $Z\in\K^n$ is a {\em generalized zonoid} if its support function can be written in the form
\begin{equation}\label{gen_zonoid}
h(Z,u)=\int_{\sfe}|\langle u,v\rangle|d\rho_Z(v)+\langle u,v_0\rangle,\quad\forall u\in\R^n,
\end{equation}
where $\rho_Z$ is a signed even measure on $\sfe$, called the \emph{generating measure of $Z$}, and $v_0\in\R^n$ is fixed. In particular every generalized zonoid has a center of symmetry (which is the origin when $v_0=0$). 
It is known that generalized zonoids are dense in $\K^n_s$ (see \cite[Corollary 3.5.7]{schneider.book14}).

\subsection{Mixed volumes}\label{s: mixed vol}
The mixed volume in $\R^n$ is the unique multilinear functional 
\[
\func{V}{(\K^n)^n }{\R}{(K_1,\dots,K_n)}{V(K_1,\dots,K_n)}
\]
which satisfies $V(K,\dots,K)=\vol(K)$ and is symmetric and Minkowski additive in each component. The mixed volume functional is non-negative and in each variable it is continuous, translation invariant, and monotonic. We refer the reader to~\cite[Chapter~5]{schneider.book14} for a systematic study of mixed volumes.

For $1\leq i\leq n$ and $K,K_{i+1},\dots,K_n\in\K^n$, we use $[i]$ inside a mixed volume to denote that the convex body $K$ is repeated $i$ times:
\[
V(K[i],K_{i+1},\dots,K_n)=V(\underbrace{K,\dots,K}_{i\textrm{-times}},K_{i+1},\dots, K_n).
 \] 
Some special cases of mixed volumes give rise to well-known quantities such as the surface area, the Euler characteristic, or more generally the \emph{intrinsic volumes} that are defined as
$$V_j(K)=\frac{1}{V_{n-j}(B^{n-j})}\binom{n}{j}V(K[j],B^n[n-j]),\quad K\in\K^n.$$
If $j=n$, then $V_n(K)$ coincides with the $n$-dimensional volume of $K$, $V_{n-1}(K)$ with its surface area and $V_0(K)$ with the Euler characteristic of $K$.

Let $E\in\Gr(n,k)$, let $K_1,\dots,K_k\in\K^n$, and let $L_1,\dots,L_{n-k}$ be  convex bodies contained in $E^{\perp}\in\Gr(n,n-k)$. 
The mixed volume $V(K_1,\dots,K_k,L_1,\dots,L_{n-k})$ can be
split in a product of mixed volumes of convex bodies contained in $E$ and $E^{\perp}$, as follows:
\begin{equation}\label{eq_Gardner}
\binom{n}{k}V(K_1,\dots,K_k,L_1,\dots,L_{n-k})=V_E(K_1|E,\dots,K_k|E)V_{E^{\perp}}(L_1,\dots,L_{n-k}),
\end{equation}
where $V_E$ and $V_{E^{\perp}}$ denote the mixed volume functional defined on $E$ and $E^{\perp}$ respectively, with
the standard identification with $\R^k$ and $\R^{n-k}$, respectively (see, e.g., \cite[(A.36)]{gardner.book06}).

An immediate consequence of~\eqref{eq_Gardner} is the following representation of the support function of the difference body.

\begin{corollary}\label{th repres D_v} Let $n\geq 2$, $v\in\R^n$ and let $K\in\K^n$. 
Then there exist $n-1$ vectors $\{v_2^v,\dots,v_n^v\}$, constituting an orthonormal basis of  $\spa\{v\}^{\perp}$,  such that 
$$h(DK,v)=h(K,v)+h(K,-v)=\frac{1}{2^{n-1}}V(K,S_{v_2}^v,\dots,S_{v_n}^v),\quad\forall v\in\sfe,$$
for $S_{v_j}^v=[-v_j^v,v_j^v]$, $j\in\{2,\dots,n\}$. 
\end{corollary}
\begin{proof}
Observing that $h(K,v)+h(K,-v)$, the width of $K$ in the direction $v$, measures the length of the projection of $K$ onto the line spanned by $v$, we have $V_1(K|\spa\{v\})=h(K,v)+h(K,-v)$. Taking $E=\spa\{v\}$ in \eqref{eq_Gardner}, we obtain the result.  
\end{proof}

The following result provides equivalent conditions ensuring that a mixed volume is strictly positive.
\begin{theorem}[Theorem 5.1.8 in \cite{schneider.book14}]\label{mix_volumes_Schneider}
For $K_1,\dots,K_n\in\K^n$, the following assertions are equivalent:
\begin{enumerate}
\item[\emph{(a)}] $V(K_1,\dots,K_n)>0$;
\item[\emph{(b)}] there are segments $S_i\subset K_i$ $(i=1,\dots,n)$ having linearly independent directions;
\item[\emph{(c)}] $\dim(K_{i_1}+\dots+K_{i_k})\geq k$ for each choice of indices $1\leq i_1<\dots<i_k\leq n$  and for all $k\in\{1,\dots,n\}$.
\end{enumerate}
\end{theorem}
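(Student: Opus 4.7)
To prove the equivalence of (a), (b), (c), I would dispatch (b)$\Rightarrow$(a) and (b)$\Leftrightarrow$(c) by direct arguments and concentrate the real work on (a)$\Rightarrow$(c), which I plan to prove by contraposition via a Fubini-style slicing.

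For (b)$\Rightarrow$(a): given segments $S_i \subset K_i$ with linearly independent directions $v_i$, translate so that $S_i = [0,v_i]$. The Minkowski sum $\sum_i t_i S_i$ is a parallelotope of $n$-volume $|\det(v_1,\dots,v_n)|\,t_1\cdots t_n$, so expanding $\vol(\sum_i t_i S_i)$ in the Minkowski polynomial and extracting the coefficient of $t_1\cdots t_n$ gives $V(S_1,\dots,S_n) = |\det(v_1,\dots,v_n)|/n! > 0$; monotonicity of mixed volumes then yields $V(K_1,\dots,K_n) \ge V(S_1,\dots,S_n) > 0$. For (b)$\Leftrightarrow$(c): since $\lin(K_{i_1}+\dots+K_{i_k}) = \lin K_{i_1}+\dots+\lin K_{i_k}$, condition (c) rephrases as $\dim(\sum_{j \in I} \lin K_j) \ge |I|$ for every $I \subset \{1,\dots,n\}$. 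This is exactly the vector-space version of Hall's marriage condition for the system of subspaces $\{\lin K_j\}_{j=1}^n$, equivalent by Hall's theorem to the existence of linearly independent $v_j \in \lin K_j$, i.e., linearly independent directions of non-degenerate segments in the $K_j$'s, which is (b).

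For (a)$\Rightarrow$(c) I argue the contrapositive. Suppose $\dim(K_{i_1}+\dots+K_{i_k}) \le k-1$; after relabelling and using translation invariance of mixed volumes in each entry, assume $K_1,\dots,K_k$ all lie in a fixed $(k-1)$-dimensional linear subspace $E \subset \R^n$. Write $A(t) = \sum_{i=1}^k t_i K_i \subset E$ and $B(t) = \sum_{i=k+1}^n t_i K_i$. Fubini along $\R^n = E \oplus E^{\perp}$ yields
$$
\vol\Bigl(\sum_{i=1}^n t_i K_i\Bigr) \;=\; \int_{B(t)|E^{\perp}} V_{k-1}\bigl(A(t) + B(t)_y\bigr)\, dy,
$$
where $B(t)_y := (B(t)\cap(E+y)) - y \subset E$ is the slice of $B(t)$ at height $y$ and $V_{k-1}$ denotes the $(k-1)$-dimensional volume in $E$. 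For every fixed $y$ and fixed $t_{k+1},\dots,t_n$, the integrand is a polynomial in $(t_1,\dots,t_k)$ of total degree at most $k-1$ (it is $V_{k-1}$ of a Minkowski combination of $k$ bodies in $E$ added to the fixed body $B(t)_y$), while the integration domain $B(t)|E^{\perp}$ does not depend on $(t_1,\dots,t_k)$. Differentiating under the integral, $\partial_{t_1}\cdots\partial_{t_k}$ annihilates the integrand identically, hence $\partial_{t_1}\cdots\partial_{t_k}\vol(\sum_i t_i K_i) \equiv 0$. Since $n!\,V(K_1,\dots,K_n)$ equals the coefficient of $t_1\cdots t_n$ in $\vol(\sum_i t_i K_i)$, this forces $V(K_1,\dots,K_n)=0$.

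The hardest step is this last one. One must set up the slicing identity carefully (checking that the domain of integration depends only on $t_{k+1},\dots,t_n$, so that it commutes with $\partial_{t_i}$ for $i \le k$), and justify the total-degree bound by writing out the Minkowski polynomial expansion of $V_{k-1}(A(t)+C)$ for a fixed body $C \subset E$. Once those two points are in place, differentiation under the integral is routine and the vanishing of the mixed volume becomes purely algebraic.
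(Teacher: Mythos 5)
This theorem is not proved in the paper at all: it is quoted verbatim from Schneider's monograph (Theorem 5.1.8 there), so there is no in-paper argument to compare yours against, and your proposal has to be judged on its own. Judged so, it is essentially correct: the cycle (b)$\Rightarrow$(a)$\Rightarrow$(c)$\Rightarrow$(b) closes, the computation $V(S_1,\dots,S_n)=|\det(v_1,\dots,v_n)|/n!$ plus monotonicity of mixed volumes (which the paper itself records in Section 2.3) settles (b)$\Rightarrow$(a), and in (a)$\Rightarrow$(c) your slicing is sound: since $A(t)\subset E$ one indeed has $(A(t)+B(t))\cap(E+y)=A(t)+\bigl(B(t)\cap(E+y)\bigr)$, the domain $B(t)|E^{\perp}$ does not depend on $t_1,\dots,t_k$, and the integrand is a polynomial of total degree at most $k-1$ in $(t_1,\dots,t_k)$ by the Minkowski expansion inside the $(k-1)$-dimensional space $E$. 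Two points deserve repair. First, in (c)$\Rightarrow$(b) what you invoke is not literally Hall's marriage theorem: the sets $\lin K_j$ are subspaces and you need a \emph{linearly independent} transversal, so the correct tool is the matroid generalization (Rado's theorem on independent transversals), or an explicit induction on $n$ mimicking Hall's argument; this is classical and true, but it is a stronger lemma than the one you name and should be cited or proved as such. Second, the differentiation under the integral sign is justifiable (the coefficients are mixed volumes of bodies contained in a fixed ball, hence uniformly bounded in $y$, and measurability in $y$ comes from Fubini applied to slice volumes), but you can avoid the interchange entirely: with $t_1=\dots=t_k=s$ and the remaining $t_i$ fixed, your integral representation gives $\vol\bigl(\sum_i t_iK_i\bigr)=O(s^{k-1})$ as $s\to\infty$, whereas a nonzero coefficient of $t_1\cdots t_n$, all mixed volumes being nonnegative, would force growth of order $s^{k}$; hence $V(K_1,\dots,K_n)=0$. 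With the Rado citation and either of these justifications, your proof is complete and self-contained.
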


The \emph{surface area measure of order 1} of $K\in\K^2$, denoted by $S_1(K,\cdot)$, is the unique finite Borel measure on $\sfed$ such that
$$\V(K,B^2)=\dfrac{1}{2}\int_{\sfed}\,dS_1(K,u).$$
We refer the reader to \cite[Chapters 4 and 5]{schneider.book14} for a description of the surface area measure of order 1, and more generally, for the description of the surface area measures. 
Some properties of the surface area measure of order 1 that we need in the following are that $S_1(K,\cdot)$ is weakly continuous with respect to $K$ and it is Minkowski additive, namely, for $K,L\in\K^2$ and a Borel set $\omega\subset\sfed$, $S_1(K+L,\omega)=S_1(K,\omega)+S_1(L,\omega)$.
Further, for every Borel set $\omega\subset \sfed$, the map $K\mapsto S_1(K,\omega)$ is a real-valued, 1-homogeneous, and translation invariant valuation (see Equation \eqref{valuation property}).

\subsection{Translation-invariant valuations}\label{ti val}
Let $(\mathcal{A},+)$ be an Abelian semigroup. An operator $\gv:\K^n\longrightarrow\mathcal{A}$ is a \emph{valuation} if for any $K,L\in \K^n$ with $K\cup L \in \K^n$,
\begin{equation}\label{valuation property}
\gv(K)+\gv(L)=\gv(K\cup L)+\gv(K\cap L).
\end{equation}
The most well-known valuations are \emph{real-valued valuations}, i.e., those for which $(\mathcal A,+)=(\R,+)$ with the usual addition of real numbers. Real-valued valuations were probably first used by Dehn for his solution of the third Hilbert problem. The reader interested in the state of the art of the theory of real-valued valuations is referred to the valuable surveys \cite{hadwiger,alesker.survey,fu.survey,bernig.survey, klain.rota, mcmullen93, mcmullen.schneider83} 
and \cite[Chapter 6]{schneider.book14}, and to  \cite{alesker.bernig.schuster,alesker.faifman,bernig.fu.solanes,bernig.fu,ludwig.reitzner} for the most recent results. 
Nowadays, apart from real-valued and Minkowski valuations, other valuations, namely, for other abelian semigroups $(\mathcal A,+)$ than the reals or $(\K^n,+)$, have been studied, often motivated by their applications in material science and physics. Among these valuations are the tensor-valued valuations, area and curvature measures, or taking values in some space of  functions (see, for instance, \cite{bernig.hug,haberl.parapatits,ludwig_sob,ludwig.survey,ludwig.survey2,wannerer1,wannerer2}).

In this work, we will only consider {\em real-valued} and {\em Minkowski valuations}. Two basic features of valuations which will be used throughout are continuity with respect to the Hausdorff metric and translation invariance.
If $\gv:\K^n\longrightarrow \K^n$ is assumed to be continuous with respect to the Hausdorff distance, then the topology inherited from this distance is assumed in both, domain and image spaces.
A valuation $\gv$ (real-valued or Minkowski) is \emph{translation invariant} if 
\[
\gv(K+t)=\gv(K) \text{ for any } t\in\R^n\text{ and }K\in\K^n.
\]
A (real-valued or Minkowski) valuation $\gv$ is said to be \emph{homogeneous of degree} $k\in\R$ if for every $K\in\K^n$ and $\lambda>0$,
\[
\gv(\lambda K)=\lambda^k \gv(K).
\]
We say that a valuation $\gv:\K^n\longrightarrow (\mathcal A,+)$ taking values in an ordered semigroup $\A$ is \emph{monotonic} (increasing with respect to set inclusion) if for every $K,L\in\K^n$ such 
that $K\subseteq L$, we have $\gv(K)\subseteq\gv(L)$.

We denote by $\Val$ the family of continuous and translation invariant real-valued valuations. 
The subset of $\Val$ consisting of homogeneous of degree $k$ (resp.~even) real-valued valuations is denoted by $\Val_k$ (resp.~$\Val^+$). Analogously,
$\MVal$ and $\MVal_k$ are the spaces of continuous and translation invariant Minkowski valuations, and 
continuous, translation invariant, and $k$-homogeneous Minkowski valuations. Notice that all these sets of valuations have the structure of real vector spaces. 

An operator $\di:\K^n\longrightarrow\K^n$ is said to be an \emph{$o$-symmetrization} if $\di(K)$ is $o$-symmetric for every $K\in\K^n$. 

\smallskip
Real-valued and Minkowski valuations are naturally connected by means of the support function through the following construction. Let $\di$ be a Minkowski valuation and let 
$w\in\R^n$ be fixed. Then $\rvd_w\,:\,\K^n\longrightarrow\R$, defined by
\begin{equation}\label{white star}
\rvd_w(K)=h(\di(K),w),\quad\forall\, K\in\K^n,
\end{equation}
is a real-valued valuation which inherits several of the properties of $\di$.
In the next lemma we collect some properties of $\di_w$ that we will use all over the work without  explicit mention. 
\begin{lemma}\label{osservazione 1} Let $\di\in\MVal$, let $w\in\R^n$ and define
$$\func{\rvd_w}{\K^n}{\R}{K}{h(\di(K),w).}$$
The following facts hold:
\begin{enumerate}
\item[\emph{(i)}] $\rvd_w\in\Val$;
\item[\emph{(ii)}] if $\di$ is $k$-homogeneous, then $\rvd_w$ is $k$-homogeneous;
\item[\emph{(iii)}] if $\di$ is monotonic increasing (resp. decreasing), then $\rvd_w$ is monotonic increasing (resp. decreasing), too.
\end{enumerate}
Moreover, if we assume that $\di:\K^n\longrightarrow\K^n$ is a translation invariant and monotonic Minkowski valuation, then 
\begin{enumerate}
\item[\emph{(iv)}] 
$\di_w$ and $\di$ are continuous.
\end{enumerate} 
\end{lemma}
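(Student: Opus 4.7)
The plan is to obtain (i)--(iii) by direct computation from the defining properties of $\di$ and the standard properties of support functions, and to handle (iv) by observing that the same computations produce a monotonic valuation $\rvd_w$ without invoking continuity of $\di$, then appealing to an automatic continuity theorem.

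For (i), I would combine the Minkowski valuation identity for $\di$ with the Minkowski additivity of $h(\cdot,w)$ from \eqref{additivity of h in K} to verify the valuation identity for $\rvd_w$; translation invariance passes through $h$ trivially; continuity follows because Hausdorff convergence of convex bodies is equivalent to uniform convergence of support functions on $\sfe$. For (ii) and (iii), I would just use positive homogeneity of $h$ in its first argument and the fact that $A\subseteq B$ implies $h(A,\cdot)\le h(B,\cdot)$, both of which transfer the corresponding property of $\di$ to $\rvd_w$.

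For (iv) the difficulty is that continuity of $\di$ is no longer available, so continuity of $\rvd_w$ cannot be obtained by pure composition. The key observation is that the arguments above for the valuation identity, translation invariance, and monotonicity of $\rvd_w$ never used continuity of $\di$; hence under the hypotheses of (iv), $\rvd_w$ is already a translation invariant, monotonic, real-valued valuation on $\K^n$. I would then invoke the classical automatic continuity result for such valuations---any translation invariant, monotonic, real-valued valuation on $\K^n$ is continuous with respect to the Hausdorff metric (see, e.g., \cite[Chapter 6]{schneider.book14})---to deduce that $\rvd_w$ is continuous. To promote this to continuity of $\di$, I would argue that if $K_j\to K$ in $\K^n$ and $M$ is any ball containing the entire sequence, monotonicity of $\di$ traps all $\di(K_j)$ inside $\di(M)$; the support functions $h(\di(K_j),\cdot)$ are therefore uniformly bounded and equi-Lipschitz on $\sfe$, so the pointwise convergence granted by continuity of every $\rvd_w$ upgrades to uniform convergence on $\sfe$, i.e.\ to Hausdorff convergence $\di(K_j)\to\di(K)$.

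The main obstacle is the appeal in (iv) to the automatic continuity of monotone translation invariant real-valued valuations; the remaining items are formal consequences of the definitions together with the standard support-function calculus.
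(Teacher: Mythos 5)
Your argument follows the paper's proof essentially verbatim: (i)--(iii) by the support-function calculus, and (iv) by noting that $\rvd_w$ is a translation invariant, monotonic, real-valued valuation, invoking McMullen's automatic continuity theorem, and then upgrading pointwise convergence of the support functions $h(\di(K_j),\cdot)$ to uniform convergence on $\sfe$ to recover Hausdorff continuity of $\di$. Your equi-Lipschitz/trapping argument just spells out the last step, which the paper states more tersely via the equivalence of the Hausdorff topology with uniform convergence of support functions; the proposal is correct.
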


\begin{proof} 
 Let $\di\in\MVal$ and $w\in\R^n$.
(i) follows immediately from \eqref{id is val} and \eqref{additivity of h in K}. The proof of (ii) is a consequence of \eqref{additivity of h in K} and the homogeneity of $\di$. Finally, (iii) follows from the fact that $K\subseteq L$ if and only if $h(K,u)\leq h(L,u)$ for all $u\in\R^n$, together with the monotonicity of $\di$.
Property (iv) for $\di_w$ follows by a result of McMullen \cite{mcmullen77} stating that every translation invariant and monotonic real-valued valuation is continuous. Finally, the continuity of $\di$ is obtained from the continuity of $\di_w$ and the fact that the continuity with respect to the Hausdorff topology is equivalent to the uniformly convergence topology in the set of support functions. 
\end{proof}

\smallskip
Next we introduce the {\em Klain function} (\cite[p.~356]{schneider.book14}) of a real-valued valuation, which will be needed to prove Proposition~\ref{characterization}.

Let $\rv\in\Val_j$. The \emph{Klain function} of $\rv$ is the continuous map $\Kl_{\mu}:\Gr(n,j)\longrightarrow\R$ such that for every
$K\in\K^n$ such that $K\subset E\in\Gr(n,j)$,
$$
\rv(K)=\Kl_\rv(E)\, V_j(K)
$$ 
where $V_j$ denotes the $j$-th intrinsic volume of $K\subset E$, that is, its $j$-dimensional volume.
For $\mu\in\Val_1$ and $E\in\Gr(n,1)$, the definition of $\Kl_\rv(E)$ yields that
\begin{equation}\label{identity 2}
\Kl_\rv(E)=\frac12\rv(E\cap\ball).
\end{equation}
Klain proved in \cite{klain00} that homogeneous and even real-valued valuations are uniquely determined by its Klain function (see \cite[Theorem 6.4.11]{schneider.book14}). More precisely:
\begin{theorem}[\cite{klain00}]\label{klain_injectivity}
The map $\Val_j^+\longrightarrow C(\Gr(n,j))$ is injective. 
\end{theorem}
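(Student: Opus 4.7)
The plan is to show that if $\mu \in \Val_j^+$ satisfies $\Kl_\mu \equiv 0$, then $\mu \equiv 0$; by linearity of the assignment $\mu \mapsto \Kl_\mu$, this gives injectivity. I would proceed by induction on the dimension of the supporting subspace, using Klain's earlier characterization of simple even valuations as the driving tool.

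First I would observe that the hypothesis $\Kl_\mu \equiv 0$, combined with translation invariance, forces $\mu(K) = 0$ for every $K \in \K^n$ with $\dim K \le j$. Indeed, any such $K$ is contained in a translate of some $E \in \Gr(n,j)$; by translation invariance we may assume $K \subset E$, and then the defining relation gives $\mu(K) = \Kl_\mu(E) V_j(K) = 0$ (either because $\Kl_\mu(E) = 0$, or because $V_j(K) = 0$ when $\dim K < j$).

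Next I would run an induction on $k \ge j$ to prove that $\mu$ vanishes on every convex body of dimension at most $k$. The base case $k = j$ is the observation above. For the inductive step, fix $K$ with $\dim K = k+1$; after translation we may assume $K$ lies in a linear subspace $E \in \Gr(n, k+1)$. Restricting $\mu$ to convex bodies contained in $E$ produces a continuous, translation invariant, even, $j$-homogeneous real-valued valuation $\mu_E$ on the $(k+1)$-dimensional ambient space $E \cong \R^{k+1}$, and by the inductive hypothesis $\mu_E$ vanishes on every convex body in $E$ of dimension at most $k$; equivalently, $\mu_E$ is \emph{simple}.

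The decisive step is then to invoke Klain's earlier theorem on even simple valuations, which asserts that every continuous, translation invariant, even, simple real-valued valuation on $\R^{k+1}$ is a constant multiple of the $(k+1)$-dimensional volume. Thus $\mu_E = c\, V_{k+1}|_E$ for some $c \in \R$; but $\mu_E$ is homogeneous of degree $j$ while $V_{k+1}|_E$ is homogeneous of degree $k+1 > j$, so comparing degrees forces $c = 0$, and hence $\mu_E \equiv 0$. This closes the induction and yields $\mu \equiv 0$ on all of $\K^n$. The main obstacle is precisely the appeal to the simple-valuation theorem: upgrading vanishing on $j$-dimensional bodies to vanishing on higher-dimensional ones has no obvious direct argument, and it is exactly this classification of even simple valuations that supplies the missing ingredient at each inductive stage.
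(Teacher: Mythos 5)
This statement is quoted in the paper from Klain's work \cite{klain00} (see also Theorem 6.4.11 in \cite{schneider.book14}); the paper itself gives no proof, so there is nothing internal to compare against. Your sketch is correct and is essentially the standard argument behind the Klain embedding: by linearity it suffices to show that $\Kl_\mu\equiv 0$ forces $\mu\equiv 0$; the hypothesis kills $\mu$ on all bodies of dimension at most $j$, and then an induction on the dimension of the affine hull, restricting $\mu$ to a $(k+1)$-dimensional subspace $E$ where it becomes a simple, even, continuous, translation invariant valuation, lets you invoke the even case of the Klain--Schneider theorem (every such simple valuation on $E\cong\R^{k+1}$ is $c\,V_{k+1}$), after which the mismatch of homogeneity degrees $j<k+1$ gives $c=0$. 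All the individual steps check out: evenness, continuity and translation invariance do pass to the restriction $\mu_E$, and the degree comparison argument is valid because $V_{k+1}$ is strictly positive on full-dimensional bodies of $E$. The only caveat is that the argument is not self-contained: the entire weight rests on the classification of even simple valuations, which is precisely the deep ingredient in Klain's original proof, so what you have written is a faithful reduction to that theorem rather than an independent route.
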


Next we state another result of Klain, for the particular case that we need in the following.
\begin{theorem}[Theorem 6.4.12 in \cite{schneider.book14}]\label{6.4.12}
Let $\rv\in\Val_1$ and let $\kl_{\rv}:\Gr(n,1)\longrightarrow\R$ be its Klain function. If $Z\in\K^n$ is a generalized zonoid with generating measure $\rho_Z$, then 
$$\rv(Z)=2\int_{\sfe}\kl_{\rv}(E_u)d\rho_Z(u),$$
where $E_u=\spa\{u\}$.
\end{theorem}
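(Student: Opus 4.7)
The plan is to reduce the identity to a direct calculation on finite sums of segments, then extend to the general case by density and linearity. As a preliminary reduction, the translation invariance of $\mu\in\Val_1$ lets me replace $Z$ by $Z-v_0$, so I may assume $v_0=0$ and hence $Z$ is $o$-symmetric with $h(Z,u)=\int_{\sfe}|\langle u,v\rangle|\,d\rho_Z(v)$; since $\kl_\mu$ is continuous on the compact Grassmannian $\Gr(n,1)$, the right-hand integral is well-defined for any signed finite Borel measure. The key structural fact I will use is that every $\mu\in\Val_1$ is Minkowski additive: by McMullen's polynomial expansion theorem, for any $\mu\in\Val$ and any bodies $K_1,\dots,K_N$, the map $(\lambda_1,\dots,\lambda_N)\mapsto\mu(\sum_i\lambda_i K_i)$ is a polynomial in $\lambda_i\geq 0$ that is homogeneous of degree $k$ when $\mu\in\Val_k$. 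For $k=1$ this polynomial must be linear, so $\mu(\sum_i\lambda_i K_i)=\sum_i\lambda_i\mu(K_i)$.

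For a single segment $S_u=[-u,u]$ with $u\in\sfe$, the defining property of the Klain function gives $\mu(S_u)=\kl_\mu(E_u)\,V_1(S_u)=2\kl_\mu(E_u)$. Given any finite even measure $\rho_N=\sum_{i=1}^N\tfrac{\lambda_i}{2}(\delta_{u_i}+\delta_{-u_i})$ with $\lambda_i>0$, the corresponding finite zonoid is $Z_N=\sum_i\lambda_i S_{u_i}$, and the Minkowski additivity obtained above yields
$$
\mu(Z_N)=\sum_i\lambda_i\mu(S_{u_i})=2\sum_i\lambda_i\kl_\mu(E_{u_i})=2\int_{\sfe}\kl_\mu(E_u)\,d\rho_N(u),
$$
where the last equality uses $E_{u_i}=E_{-u_i}$. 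This verifies the formula on finite even discrete measures.

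To pass to a zonoid $Z$ with nonnegative generating measure $\rho$, I approximate $\rho$ in the weak-$\star$ topology by even finite discrete measures $\rho_N$. Because the family $\{|\langle u,\cdot\rangle|\}_{u\in\sfe}$ is equicontinuous on the compact sphere, weak-$\star$ convergence $\rho_N\to\rho$ upgrades to uniform convergence $h(Z_N,\cdot)\to h(Z,\cdot)$ on $\sfe$, so $Z_N\to Z$ in Hausdorff distance. Continuity of $\mu$ together with continuity of $u\mapsto\kl_\mu(E_u)$ then transports the identity from $Z_N$ to $Z$. Finally, for a genuine generalized zonoid I apply the Jordan decomposition $\rho_Z=\rho^+-\rho^-$ (which remains even since $\rho_Z$ is even), let $Z^\pm$ be the two zonoids with generating measures $\rho^\pm$, and use $Z+Z^-=Z^+$ together with Minkowski additivity to write $\mu(Z)=\mu(Z^+)-\mu(Z^-)$; linearity of the integral in $\rho_Z$ finishes the proof. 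The main technical obstacle is precisely the density step: checking that even discrete measures are weak-$\star$ dense in the even finite positive Borel measures on $\sfe$, and promoting pointwise convergence of the support functions to uniform convergence by exploiting equicontinuity of the bilinear integrands.
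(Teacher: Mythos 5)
This statement is imported by the paper from Schneider's monograph (Theorem 6.4.12 in \cite{schneider.book14}) and is stated there without proof, so there is no internal argument to compare yours with. Your proof is correct and follows the standard route: McMullen's polynomial expansion gives Minkowski additivity (linearity) of elements of $\Val_1$, which reduces the identity to segments, where it is exactly the definition of the Klain function via $\Kl_\rv(E)=\tfrac12\rv(E\cap\ball)$; zonoids with positive generating measure are then handled by approximation with zonotopes, and generalized zonoids by the Jordan decomposition $\rho_Z=\rho^+-\rho^-$ together with $Z+Z^-=Z^+$ and additivity. The two technical points you flag are indeed the only ones needing care and are handled correctly: even discrete measures are weak-$*$ dense among even positive measures on $\sfe$, and the uniform bound $\rho_N(\sfe)$ makes the support functions equi-Lipschitz, so pointwise convergence upgrades to Hausdorff convergence, allowing the continuity of $\rv$ and of $u\mapsto\Kl_\rv(E_u)$ to pass the identity to the limit.
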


\subsection{The space $\MAdd$}
We consider an operator $\di:\K^n\longrightarrow\K^n$ which it is continuous (with respect to the Hausdorff topology), translation invariant, 
and \emph{Minkowski additive}, that is, 
$$
\di(K+L)=\di(K)+\di(L),\quad\forall K,L\in\K^n.
$$
We denote the space of these operators by $\MAdd$. The subspace of even (resp.~$o$-symmetrizations) is denoted by $\MAdd^{+}$ (resp. $\MAdd^s$). 

As described in the introduction, we have the following relation. 
\begin{lemma}[\cite{spiegel}, Remark 6.3.3 in \cite{schneider.book14}]
Let $n\geq 2$. Then $\di\in\MVal_1$ if and only if $\di\in\MAdd$. 
\end{lemma}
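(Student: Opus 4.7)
The plan is to prove each implication separately. For $\MAdd \Rightarrow \MVal_1$, continuity and translation invariance come for free from the definition of $\MAdd$. The valuation property follows by applying $\di$ to the identity $K+L = (K \cup L) + (K \cap L)$ in~\eqref{id is val} and using Minkowski additivity on both sides. For $1$-homogeneity, iterating Minkowski additivity yields $\di(nK) = n\di(K)$ for all $n \in \N$, from which $\di((p/q) K) = (p/q)\di(K)$ is obtained for positive rationals by writing $\di(pK) = \di(q \cdot \tfrac{p}{q}K) = q \di(\tfrac{p}{q}K)$; continuity of $\di$ and of $\lambda \mapsto \lambda K$ then extends the equality to all $\lambda > 0$.

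For the converse $\MVal_1 \Rightarrow \MAdd$, the task is to upgrade the valuation identity on unions and intersections to full Minkowski additivity. The natural reduction is to the real-valued setting via support functions: for each $w \in \R^n$, Lemma~\ref{osservazione 1} gives $\di_w(K) := h(\di(K), w) \in \Val_1$. The key step is then to argue that \emph{every} element of $\Val_1$ is Minkowski additive. This follows from McMullen's polynomial expansion~\cite{mcmullen77}: for $\mu \in \Val_k$ and $K, L \in \K^n$, the map $(\lambda, \sigma) \mapsto \mu(\lambda K + \sigma L)$ is a polynomial of degree at most $k$ on $[0,\infty)^2$. In the case $k = 1$ this polynomial is linear, and setting $\sigma = 0$ or $\lambda = 0$ (together with $1$-homogeneity) identifies its coefficients as $\mu(K)$ and $\mu(L)$, so evaluating at $(1,1)$ gives $\mu(K + L) = \mu(K) + \mu(L)$.

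Applying this to $\di_w$ for each $w$ yields
$$h(\di(K+L), w) = \di_w(K+L) = \di_w(K) + \di_w(L) = h(\di(K) + \di(L), w),$$
and since a convex body is uniquely determined by its support function, $\di(K+L) = \di(K) + \di(L)$. The main obstacle is the real-valued statement that elements of $\Val_1$ are Minkowski additive, which is where McMullen's polynomial expansion is essential; the passage from real-valued valuations to Minkowski-valued ones via support functions is then routine.
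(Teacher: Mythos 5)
Your argument is correct; note that the paper itself offers no proof of this lemma at all --- it is imported by citation from Spiegel and from Remark 6.3.3 in \cite{schneider.book14} --- so there is no internal argument to compare against, and your route (the easy direction via $K+L=(K\cup L)+(K\cap L)$ together with rational homogeneity plus continuity, and the converse by passing to $\di_w=h(\di(\cdot),w)\in\Val_1$ and showing that degree-one valuations in $\Val_1$ are Minkowski additive) is exactly the standard argument underlying those references. One point of precision: McMullen's polynomiality theorem gives that $(\lambda,\sigma)\mapsto\mu(\lambda K+\sigma L)$ is a polynomial of total degree at most $n$, not $k$; for $\mu\in\Val_1$ one then uses $1$-homogeneity to see that this polynomial is homogeneous of degree one with coefficients $\mu(K)$ and $\mu(L)$ (in particular no constant term), which is precisely the role your parenthetical ``together with $1$-homogeneity'' plays, so the proof closes as you describe.
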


The following lemma describes the image of a point of an operator $\di\in\MAdd$.
\begin{lemma}\label{additive}
Let $n\geq 2$. If $\di\in\MAdd$, then $\di(\{p\})=0$  for every $p\in\R^n$.
\end{lemma}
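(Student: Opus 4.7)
The plan is to reduce the statement to the single identity $\Phi(\{0\})=\{0\}$ by translation invariance, and then to exploit the idempotence $\{0\}+\{0\}=\{0\}$ under Minkowski addition together with Minkowski additivity of $\Phi$.

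First I would observe that, for any $p\in\R^n$, the singleton $\{p\}$ is the translate $\{0\}+p$ of the origin, so translation invariance of $\Phi$ gives $\Phi(\{p\})=\Phi(\{0\})$. It therefore suffices to establish $\Phi(\{0\})=\{0\}$.

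Next, since $\{0\}+\{0\}=\{0\}$ as convex bodies, Minkowski additivity of $\Phi$ yields
\begin{equation*}
\Phi(\{0\})=\Phi(\{0\}+\{0\})=\Phi(\{0\})+\Phi(\{0\}).
\end{equation*}
Passing to the support function and using its Minkowski-linearity from \eqref{additivity of h in K}, we obtain $h(\Phi(\{0\}),u)=2\,h(\Phi(\{0\}),u)$ for every $u\in\R^n$. Hence $h(\Phi(\{0\}),\cdot)\equiv 0$, which, by the fact that a convex body is uniquely determined by its support function, forces $\Phi(\{0\})=\{0\}$. Neither continuity nor any further structure is needed, and there is no genuine obstacle; the only subtle point is to pass to support functions rather than trying to cancel convex bodies directly in the equality $\Phi(\{0\})=\Phi(\{0\})+\Phi(\{0\})$.
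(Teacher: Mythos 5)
Your proof is correct; the only difference from the paper is the decomposition you feed into Minkowski additivity. The paper applies additivity to $K+\{p\}$ for an arbitrary convex body $K$: translation invariance gives $\di(K+p)=\di(K)$, while additivity gives $\di(K+p)=\di(K)+\di(\{p\})$, and cancelling $\di(K)$ (implicitly via support functions) yields $\di(\{p\})=\{0\}$. You instead use translation invariance only to reduce to $p=0$ and then exploit the idempotence $\{0\}+\{0\}=\{0\}$, so the cancellation happens in the identity $\di(\{0\})=\di(\{0\})+\di(\{0\})$, which you justify carefully by passing to support functions via \eqref{additivity of h in K}. The two arguments use exactly the same ingredients (additivity, translation invariance, and the fact that a convex body is determined by its support function); yours isolates the cancellation step more explicitly, which the paper leaves implicit, while the paper's version avoids singling out the origin. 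Either way, no continuity is needed, as you note.
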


\begin{proof}
Since $\di$ is Minkowski additive and translation invariant, we have that for every convex body $K$, 
\begin{align*}
\di(K+p)&=\di(K)+\di(\{p\})\\&=\di(K),
\end{align*}
which implies $\di(\{p\})=0$ for every $p\in\R^n$.
\end{proof}

In Section~\ref{sec: 1h and mon} we will study operators belonging to $\MAdd$, satisfying (VC), which are also monotonic.  We will need the following results in this direction.

\begin{theorem}[\cite{firey}, \cite{milman.schneider}]\label{Firey_mon}
Let $\rv:\K^n\longrightarrow\R$ be a monotonic, translation invariant, and 1-homogeneous valuation. Then there exist $k\in\{1,\dots,n\}$ and $(n-k)$ 
pairwise orthogonal unit segments $S_{k+1},\dots,S_n$ such that one of the following cases occurs. 
\begin{itemize}
\item[(i)] $k\geq 2$ and there is a convex body $L$ of dimension $k$, contained in the orthogonal complement of $\mathrm{span}\{S_{k+1},\dots,S_n\}$ 
such that 
$$
\rv(K)=V(K,L[k-1],S_{k+1},\dots,S_n).
$$
\item[(ii)] $k=1$ and there is a constant $c>0$ such that 
$$
\rv(K)=cV(K,S_{2},\dots,S_n).
$$
\end{itemize}
\end{theorem}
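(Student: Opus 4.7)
The plan is to represent $\rv$ as integration against a non-negative measure on $\sfe$, realize that measure as a (mixed) surface area measure of a convex body via Minkowski's existence theorem, and then repackage the result as a mixed volume through the splitting formula \eqref{eq_Gardner}.

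\medskip

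First, $\rv$ is continuous by McMullen's theorem for monotonic, translation invariant valuations (already invoked in Lemma~\ref{osservazione 1}(iv)). McMullen's polynomial expansion ensures that $\rv(\lambda_1 K_1+\cdots+\lambda_m K_m)$ is a polynomial in the variables $\lambda_i\ge 0$ of total degree at most $n$; combined with $1$-homogeneity this forces linearity in each $\lambda_i$ separately, hence Minkowski additivity $\rv(K+L)=\rv(K)+\rv(L)$. Since $h(K+L,\cdot)=h(K,\cdot)+h(L,\cdot)$, the assignment $h(K,\cdot)\mapsto\rv(K)$ is a well-defined positively homogeneous additive functional on the cone of support functions. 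Monotonicity of $\rv$ (via $K\subseteq L\iff h(K,\cdot)\le h(L,\cdot)$) translates to positivity of this functional, and translation invariance kills its dependence on linear summands. By density of differences of support functions in $C(\sfe)$ and the Riesz representation theorem, I would extract a non-negative Borel measure $\nu$ on $\sfe$ with
$$\rv(K)=\int_{\sfe}h(K,u)\,d\nu(u),\quad\forall K\in\K^n.$$
Translation invariance further imposes the centering condition $\int_{\sfe} u\,d\nu(u)=0$.

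\medskip

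Let $E\subset\R^n$ be the smallest linear subspace such that $\supp\nu\subset E\cap\sfe$, and set $k=\dim E$. If $k=1$, the centering condition forces $\nu=(c/2)(\delta_u+\delta_{-u})$ for some $u\in\sfe$ and $c>0$; then $\rv(K)=(c/2)(h(K,u)+h(K,-u))$, which by Corollary~\ref{th repres D_v} (after rescaling the segments there to unit length) equals $c'\cdot V(K,S_2,\dots,S_n)$ for pairwise orthogonal unit segments $S_2,\dots,S_n$ spanning $\spa\{u\}^{\perp}$. This gives case (ii). If $k\ge 2$, then $\nu$ is a non-negative, centered measure on the unit sphere of $E$ not concentrated on any hyperplane of $E$. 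Minkowski's existence theorem, applied inside $E$, produces a (unique up to translation and a positive scaling) $k$-dimensional convex body $L\subset E$ whose surface area measure of order $k-1$ equals $\nu$. Completing with pairwise orthogonal unit segments $S_{k+1},\dots,S_n$ along an orthonormal basis of $E^{\perp}$, the splitting formula \eqref{eq_Gardner} applied to $V(K,L[k-1],S_{k+1},\dots,S_n)$ reproduces, up to a positive constant that can be absorbed in the scaling of $L$, the integral representation of $\rv$. This gives case (i).

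\medskip

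The main obstacle is the first paragraph: the extraction of the non-negative measure $\nu$ from the abstract data of $\rv$. This step must package the additivity of $\rv$ on support functions, the positivity coming from monotonicity, and the continuity coming from McMullen so that the Riesz representation theorem applies on the cone of support-function differences in $C(\sfe)$; this is where the bulk of the Firey--Milman--Schneider argument is concentrated. A secondary subtlety is Minkowski's existence theorem in the correct form inside a possibly proper subspace $E$, and the verification that the canonical form of case (i), with pairwise orthogonal \emph{unit} segments $S_{k+1},\dots,S_n$ in $E^{\perp}$ and a convex body $L\subset E$ of dimension exactly $k$, is forced by choosing $E$ to be the smallest linear subspace supporting $\nu$.
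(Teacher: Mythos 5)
You should note first that the paper never proves this statement: Theorem~\ref{Firey_mon} is quoted from Firey and from Milman--Schneider, so there is no internal argument to compare with, and your proposal has to be judged on its own merits. It is essentially a correct reconstruction of the classical route. The derivation of Minkowski additivity from McMullen's polynomiality plus $1$-homogeneity is sound, and the step you single out as the main obstacle is in fact less delicate than you fear: on the subspace $V=\{h(K,\cdot)-h(L,\cdot):K,L\in\K^n\}\subset C(\sfe)$ the functional $T\bigl(h(K,\cdot)-h(L,\cdot)\bigr)=\rv(K)-\rv(L)$ is well defined by additivity, linear, and positive, and since the constant function $1=h(B^n,\cdot)|_{\sfe}$ lies in $V$, positivity already gives the bound $|T(f)|\le \rv(B^n)\,\|f\|_\infty$ for $f\in V$; hence $T$ extends continuously and positively to $C(\sfe)$, Riesz applies, and translation invariance yields the centering $\int_{\sfe}u\,d\nu(u)=0$. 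The second half also checks out: with $E$ the minimal subspace carrying $\supp\nu$ and $k=\dim E\ge2$, the measure is centered in $E$ and, by minimality, not concentrated on any great subsphere of $E\cap\sfe$, so Minkowski's existence theorem inside $E$ produces $L\subset E$ with $\dim L=k$ (unique up to translation only, not up to scaling --- a harmless slip in your parenthesis); then $h(K,u)=h(K|E,u)$ for $u\in E$, the identity $V_E(K|E,L[k-1])=\tfrac1k\int_{E\cap\sfe} h(K|E,u)\,dS^E_{k-1}(L,u)$, and \eqref{eq_Gardner} convert $\int h(K,u)\,d\nu(u)$ into a positive multiple of $V(K,L[k-1],S_{k+1},\dots,S_n)$, and since $L$ occurs $k-1\ge1$ times the multiple is absorbed by rescaling $L$. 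For $k=1$ the centering forces $\nu=\tfrac c2(\delta_u+\delta_{-u})$ and case (ii) follows as you indicate. The only situation not covered is $\nu=0$, i.e.\ $\rv\equiv0$, which fits neither (i) nor (ii); this defect is inherited from the formulation of the theorem itself and is immaterial for the way it is used in the paper.
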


Notice that the segments $S_{k+1},\dots,S_n$ can be chosen to be centered at the origin since mixed volumes are translation invariant in each component.

We would like to remark that Corollary~\ref{th repres D_v} also follows from the above representation result. In Section 7, we include a proof which enlightens also the proof of Theorem~\ref{+mon intro}. 

In Section~\ref{sec: SOn} we will investigate \emph{Minkowski endomorphisms}, that is, operators belonging to $\MAdd$ that are $\SO(n)$-equivariant. 
We say that $\di:\K^n\longrightarrow\K^n$ is \emph{$\SO(n)$-equivariant} if $$\di(gK)=g\di(K),\quad\forall g\in\SO(n),\,\forall K\in\K^n.$$
The space of Minkowski endomorphisms is denoted by $\MEnd$. 
For the study of Minkowski endomorphisms satisfying (VC), we will need the following results.

\begin{theorem}[\cite{alesker.bernig.schuster}]\label{SOimpliesO}
If $\di\in\MVal$ is $\SO(n)$-equivariant, then $\di$ is also $\O(n)$-equivariant.
\end{theorem}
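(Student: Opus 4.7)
The plan is to reduce $\O(n)$-equivariance of $\di$ to equivariance under a single reflection via a conjugation trick, and then to conclude using McMullen's homogeneity decomposition together with the convolution representation of $\SO(n)$-equivariant Minkowski valuations.

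Fix a reflection $\sigma\in\O(n)\setminus\SO(n)$ (so $\sigma^{2}=\mathrm{Id}$) and set $\Psi(K):=\sigma\,\di(\sigma K)$. Continuity, translation invariance and the valuation property pass from $\di$ to $\Psi$, so $\Psi\in\MVal$. Moreover, for $g\in\SO(n)$ the conjugate $\sigma g\sigma^{-1}$ again lies in $\SO(n)$ (conjugation preserves determinant), hence the $\SO(n)$-equivariance of $\di$ yields
\[
\Psi(gK)=\sigma\,\di\bigl((\sigma g\sigma^{-1})\sigma K\bigr)=\sigma(\sigma g\sigma^{-1})\,\di(\sigma K)=g\sigma\,\di(\sigma K)=g\,\Psi(K),
\]
so $\Psi$ is also $\SO(n)$-equivariant. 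Since $\O(n)=\SO(n)\sqcup\SO(n)\sigma$, once $\Psi=\di$ has been established, the $\SO(n)$-equivariance of $\di$ upgrades automatically to $\O(n)$-equivariance.

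To obtain $\Psi=\di$ I apply McMullen's decomposition to the real-valued valuations $K\mapsto h(\di K,w)$, $K\mapsto h(\Psi K,w)$ (for each $w\in\R^n$), splitting $\di$ and $\Psi$ into $k$-homogeneous Minkowski valuations $\di^{(k)},\Psi^{(k)}$ for $k=0,\dots,n$. The decomposition is functorial under $\GL(n)$, so $\Psi^{(k)}(K)=\sigma\,\di^{(k)}(\sigma K)$ and each $\di^{(k)}$ inherits $\SO(n)$-equivariance; thus it suffices to prove $\Psi^{(k)}=\di^{(k)}$ degree by degree. The endpoints are immediate: $\di^{(0)}(K)=\di(\{0\})$ is $\SO(n)$-invariant, hence a centered ball fixed by $\sigma$; and $\di^{(n)}(K)=cV_n(K)\,\ball$ for some $c\ge 0$ by $n$-homogeneity and $\SO(n)$-equivariance, again $\sigma$-invariant. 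For intermediate $1\le k\le n-1$ one invokes the spherical convolution representation---Kiderlen's theorem when $k=n-1$ and Schuster--Wannerer's extension for general $k$---which represents every $\SO(n)$-equivariant element of $\MVal_k$ as $h(\di^{(k)}K,u)=(S_k(K,\cdot)\ast f_k)(u)$ for a zonal generalized function $f_k$ on $\sfe$. For $n\ge 3$, zonality (i.e.\ $\SO(n-1)$-invariance under the stabilizer of the pole) is equivalent to $\O(n-1)$-invariance, because an $\SO(n-1)$-invariant function on the sphere depends only on the height relative to the pole, which the full stabilizer $\O(n-1)$ also preserves; consequently convolution with $f_k$ commutes with the whole $\O(n)$-action on area measures and support functions, and since $\Psi^{(k)}$ is represented by the same kernel $f_k$, one concludes $\Psi^{(k)}=\di^{(k)}$.

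The main obstacle is the last step: the Schuster--Wannerer convolution representation of $\SO(n)$-equivariant $k$-homogeneous Minkowski valuations is a substantial input from the harmonic analysis of translation invariant valuations, built on Alesker's irreducibility theorem for $\Val_k$ as an $\O(n)$-representation together with a careful treatment of the Minkowski-positive cone; without it, one would need an alternative route, for instance Klain's injectivity theorem for the even parts and Schneider's generating distributions for the odd parts, that achieves the same zonality-to-$\O(n-1)$-invariance upgrade degree by degree.
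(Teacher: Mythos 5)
You should first be aware that the paper contains no proof of this statement: it is imported from Alesker--Bernig--Schuster, where it follows from the multiplicity-free decomposition of the spaces $\Val_k$ into irreducible $\SO(n)$-modules (a hard result resting on Alesker's irreducibility theorem), and the result genuinely requires $n\ge 3$. So the benchmark is that representation-theoretic argument, and your plan essentially tries to reassemble it from representation formulae for equivariant Minkowski valuations. Your opening reduction (conjugation by a reflection $\sigma$, so that $\O(n)$-equivariance amounts to $\sigma\di(\sigma K)=\di(K)$ for all $K$) and the observation that for $n\ge3$ a zonal kernel is automatically $\O(n-1)$-invariant are both fine.

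The rest has three genuine gaps. First, McMullen's decomposition applied to $K\mapsto h(\di K,w)$ produces real-valued components $\phi_k(\cdot,w)\in\Val_k$, but for fixed $K$ the function $w\mapsto\phi_k(K,w)$ is only a difference of support functions; whether the homogeneous components of a Minkowski valuation are again Minkowski valuations is an open problem, so the operators $\di^{(k)},\Psi^{(k)}\in\MVal_k$ that your argument manipulates are not known to exist. Second, the convolution representation you invoke is not available in the generality you need: Kiderlen's theorem concerns degree-one endomorphisms and requires weak monotonicity (it is not a degree $n-1$ statement), and the Schuster--Wannerer representations concern even valuations or are themselves built on the Alesker--Bernig--Schuster harmonic analysis; for general (odd, non-monotone) $\SO(n)$-equivariant components of arbitrary degree there is no citable zonal-kernel representation, so the decisive step is either unsupported or circular. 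Third, the statement is false for $n=2$: the operator $K\mapsto g(K-\st(K))$, with $g\in\SO(2)$ a nontrivial rotation, is continuous, translation invariant and Minkowski additive (hence a Minkowski valuation), it is $\SO(2)$-equivariant because $\SO(2)$ is abelian, but it is not $\O(2)$-equivariant; this is exactly why a rotation appears in Theorem~\ref{SO_schneider}(i) and Theorem~\ref{+On_dim_2}, and why the paper applies the present theorem only for $n\ge3$. Any correct proof must therefore carry the restriction $n\ge3$ explicitly and use it in an essential way, as the Alesker--Bernig--Schuster argument does through the structure of the $\SO(n)$-isotypic decomposition; your zonality step assumes $n\ge3$ only tacitly, and no patch can extend the claim to the plane.
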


\begin{theorem}[\cite{schneider74}]\label{SO_schneider} 
Let  $\di\in\MEnd$.
\begin{enumerate}
\item[$\emph{(i)}$]  Let $n=2$. Then the image under $\di$ of some convex body is a non-degenerated segment if and only if there are 
$a,b\geq 0$ with $a+b>0$ and $g\in\SO(2)$ such that 
$\di K=ag(K-\st(K))+bg(-K+\st(K))$ for every $K\in\K^2$.
\item[$\emph{(ii)}$] Let $n\geq 3$. Then the image under $\di$ of some convex body is a non-degenerated segment if and only if there are $a,b\geq 0$ with $a+b>0$ such that 
$\di K=a(K-\st(K))+b(-K+\st(K))$ for every $K\in\K^n$.
\end{enumerate}
\end{theorem}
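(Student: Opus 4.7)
The ``if'' direction is a direct verification. The Steiner point $\st$ is Minkowski additive, continuous and $\SO(n)$-equivariant, hence $K \mapsto K - \st(K)$ and $K \mapsto -K + \st(K)$ belong to $\MEnd$, and $\MEnd$ is closed under non-negative Minkowski combinations; the extra rotation $g$ in the $n=2$ statement is an isometry, so it also preserves membership in $\MEnd$. Applied to any segment $[-v,v] + t$ with $v \neq 0$, both summands produce the same origin-centered segment parallel to $v$, whose Minkowski sum (possibly rotated by $g$ when $n=2$) is again a non-degenerate segment.

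For the ``only if'' direction in dimension $n \geq 3$, I would proceed in three stages. First (normalization) use translation invariance and Theorem~\ref{SOimpliesO} to rewrite the hypothesis as $\di K_0 = [-w_0, w_0]$ for some $w_0 \neq 0$, yielding $\di(gK_0) = [-gw_0, gw_0]$ for every $g \in \O(n)$. Second (the central rigidity step) show that for every segment $S = [-v,v]$ the body $\di S$ is again a segment along $\R v$: commutativity of $\di$ with the stabilizer $\SO(n-1)$ of $\R v$ forces $\di S$ to be a convex body of revolution around $\R v$, and one then argues, by exploiting Minkowski additivity $\di(K_0 + S) = \di K_0 + \di S$ together with the segment-image hypothesis and $\O(n)$-equivariance, that such a body of revolution can only be a segment along $\R v$. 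Third, once the action on segments is known, the action on $\K^n_s$ is fixed by the Klain machinery: for each $u \in \sfe$ the real-valued valuation $\di_u(K) = h(\di K, u)$ lies in $\Val_1$ (Lemma~\ref{osservazione 1}), its Klain function is computed from \eqref{identity 2} evaluated on the known images of segments, Theorem~\ref{6.4.12} then determines $\di_u$ on generalized zonoids, and density of generalized zonoids in $\K^n_s$ together with continuity of $\di$ extends this to the whole class $\K^n_s$. A general body $K$ is then treated by splitting the re-centered $K - \st(K)$ into its symmetric and antisymmetric parts; the condition that $\di K$ be a convex body forces $a, b \geq 0$ with $a+b > 0$.

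For part (i) with $n=2$, the stabilizer of a unit direction inside $\SO(2)$ is trivial, so the rigidity step only locates $\di S$ along some rotated line $g\R v$ for a single fixed $g \in \SO(2)$, which is the origin of the additional rotation in the $n=2$ statement. The main obstacle throughout is the rigidity Step~2 in dimension $\geq 3$: ruling out that $\di S$ could be a disk, a ball, a cylinder, or some other non-trivial body of revolution requires a careful interaction of Minkowski additivity with the segment image $\di K_0$, and this is precisely where the hypothesis ``the image of some convex body is a non-degenerate segment'' plays its decisive role.
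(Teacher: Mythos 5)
The paper does not prove Theorem~\ref{SO_schneider}: it is imported from \cite{schneider74} and used as a black box, so there is no internal proof to compare with; your attempt must therefore stand on its own, and it does not. The ``if'' direction is fine (though note that for $n=2$ the admissibility of the extra rotation $g$ rests on $\SO(2)$ being abelian, not merely on $g$ being an isometry; for $n\ge3$ post-composing with a rotation would destroy equivariance). In the ``only if'' direction, however, your Stage~2 is not an argument but a restatement of the theorem at the level of segments. The stabilizer observation only gives, for $n\ge3$, that $\di[-v,v]$ is a body of revolution with axis $\R v$; excluding balls, disks, cylinders and all other bodies of revolution using nothing but the existence of one body $K_0$ with $\di K_0$ a segment is precisely the hard content of Schneider's theorem. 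Minkowski additivity yields only $\di(K_0+S)=\di K_0+\di S$, which places no visible restriction on the summand $\di S$, and you supply no mechanism to convert the segment hypothesis into the required rigidity -- you yourself flag this as the main obstacle. Schneider's proof requires genuinely additional machinery (his harmonic-analytic/convolution description of equivariant endomorphisms), and without something of that kind the proof simply does not exist.

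Moreover, even granting Stage~2, your Stage~3 cannot reach the stated conclusion. The Klain route (Lemma~\ref{osservazione 1}, \eqref{identity 2}, Theorem~\ref{6.4.12}, density of generalized zonoids) determines only the even part of each valuation $h(\di(\cdot),u)$: an odd, translation invariant, $1$-homogeneous valuation vanishes on every segment (each segment is a translate of a symmetric one) and on every $o$-symmetric body. So knowing the images of segments pins down $\di$ on $\K^n_s$, i.e.\ the value of $a+b$, but says nothing about the action on non-symmetric bodies, i.e.\ about $a$ and $b$ individually -- which is exactly what the theorem asserts for general $K$. The proposed fix, splitting $K-\st(K)$ ``into its symmetric and antisymmetric parts'', is not available: a convex body admits no Minkowski decomposition into even and odd summands, and the identity $\di K=\tfrac12\di(DK)$ used elsewhere in the paper requires $\di$ to be even, which is not assumed here. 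This gap is reflected in the paper's own structure: the authors give an independent Klain/zonoid proof only of the symmetric case, Theorem~\ref{+On_dim_geq_3}(ii), and invoke Schneider's Theorem~\ref{SO_schneider} for the general, non-symmetric statement.
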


\subsection{Grassmannians}\label{grass}
For $n\ge2$ and $k\in\{1,\dots,n\}$, we denote by $\Gr(n,k)$ the Grassmannian formed by all $k$-dimensional vector subspaces of 
$\R^n$. More generally, if $E\in\Gr(n,k)$ and $j\in\{1,\dots,k\}$, $\Gr(E,j)$ denotes the set of all $j$-dimensional subspaces of $E$.

The following map defines a distance on $\Gr(n,k)$
\begin{equation}\label{d_Gr}
\func{d}{\Gr(n,k)\times\Gr(n,k)}{[0,\infty)}{(E,F)}{d(E,F)=d_H(E\cap\sfe,F\cap\sfe),}
\end{equation}
 where $d_H$ is the Hausdorff distance. 

We observe that the distance on $\Gr(n,1)$ is equivalent to the angle distance on $\sfe$. Indeed, let $v_1,v_2\in\sfe$. If $E_1=\spa\{v_1\}$ and $E_2=\spa\{v_2\}$, then
\begin{equation}\label{dist_Grn1}
d(E_1,E_2)=2\sin\left(\frac{\alpha}{2}\right),
\end{equation} 
where $\alpha\in[0,\pi/2]$ such that $\cos(\alpha)=\langle v_1,v_2\rangle$. 
This distance can be directly computed by elementary trigonometry and recalling definition \eqref{d_Gr}. 

The above distance between $F_1,F_2$ in $\Gr(n,k)$ can be also expressed in terms of the angles between $F_1,F_2$. Indeed, by the above definition of distance on $\Gr(n,k)$ and the definition of the Hausdorff distance,  (see (1.59) in \cite{schneider.book14} and subsequent comments)
\begin{equation}\label{dist_angles_Grnk}
d(F_1,F_2)=\max\left\{
\max_{L_1\in\Gr(F_1,1)}\!\!\left(\min_{L_2\in\Gr(F_2,1)}d(L_1,L_2)\right)\!,
\max_{L_2\in\Gr(F_2,1)}\!\!\left(\min_{L_1\in\Gr(F_1,1)}d(L_1,L_2)\right)
\right\}.
\end{equation}
Hence, \eqref{dist_Grn1} yields the claim. 

\smallskip
For the proof of our main results, we will strongly use that the space $\Gr(n,k)$, $1\leq k\leq n-1$, is compact and connected, with the topology from the endowed distance. Since the authors were not able to find appropriate references of these two facts, a proof is sketched in the appendix.

\subsection{Volume constraints}
The main new geometric property that we consider in this paper is the following volume constraint for an operator $\di\in\MAdd$. 
\begin{definition} We say that an operator $\di:\K^n\longrightarrow\K^n$ satisfies a  volume constraint condition (VC) if there exist constants $c_{\di},C_{\di}>0$ such that
$$c_{\di}\vol(K)\leq \vol(\di(K))\leq C_{\di}\vol(K),\quad\forall K\in\K^n.$$
\end{definition}

The identity operator on $\K^n$ trivially satisfies (VC). Another example of an operator satisfying a volume constraint (VC) is the difference body operator \eqref{DK}, since it satisfies \eqref{RS}. However, there are much more other elements of $\MAdd$ satisfying (VC). They will be described in Section~\ref{s: representation formulae}.

\section{The (VC) condition interacts with Grassmannians}\label{section 1-homogeneous}

In this section, we investigate properties of operators of $\MAdd$ which satisfy a volume constraint condition (VC) and prove that associated to every $\di\in\MAdd^{s,+}$ which satisfies (VC) there is a bi-Lipschitz bijection on $\Gr(n,k)$ for every $1\leq k\leq n-1$, in Theorem~\ref{teo Grassmannian}.

In the next lemma we prove that an operator of $\MAdd$ which satisfies (VC) preserves dimensions.

\begin{proposition}\label{general_facts}
Let $n\geq 2$ and let $\di\in\MAdd$ satisfy (VC). Then
$$\dim(K)=\dim(\di K),\quad\forall K\in\K^n.$$
\end{proposition}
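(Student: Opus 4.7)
The plan is to reduce to the case $\dim K < n$ (the full-dimensional case is immediate from the (VC) lower bound) and to use the polynomial expansion of $V_n(\lambda A + B)$ in $\lambda$, combined with Theorem~\ref{mix_volumes_Schneider}, to pin down $\dim \di K$ from both sides. First I would note that Minkowski additivity together with continuity forces $\di$ to be positively $1$-homogeneous, so that $\di(\lambda A + B) = \lambda \di A + \di B$ for all $\lambda \geq 0$ and $A, B \in \K^n$. Applying (VC) to $B^n$ already yields $\dim \di B^n = n$, a fact I will exploit in the upper bound.

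For the upper bound $\dim \di K \leq \dim K = k$, I would expand
\[
V_n(\lambda K + B^n) = \sum_{i=0}^n \binom{n}{i} \lambda^i V(K[i], B^n[n-i]).
\]
By Theorem~\ref{mix_volumes_Schneider}, the coefficient $V(K[i], B^n[n-i])$ vanishes for $i > k$, since $K$ cannot contain $i$ segments with linearly independent directions. Hence the right-hand side is a polynomial in $\lambda$ of degree at most $k$. Combining (VC) with $\di(\lambda K + B^n) = \lambda \di K + \di B^n$, the polynomial $V_n(\lambda \di K + \di B^n)$ is bounded above by $C_\di$ times a polynomial of degree $\leq k$. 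On the other hand, since $\dim \di B^n = n$, the same mixed-volume expansion shows that $V_n(\lambda \di K + \di B^n)$ has degree exactly $\dim \di K$ with positive leading coefficient (pick $\dim \di K$ independent segment directions in $\di K$ and complete them with $n - \dim \di K$ directions taken from the full-dimensional $\di B^n$). Comparing degrees forces $\dim \di K \leq k$.

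For the lower bound $\dim \di K \geq k$, I would use translation invariance to arrange $K \subset E$ for some $E \in \Gr(n,k)$, and then pick $L$ to be a ball in $E^\perp$, so that both $V_E(K)$ and $V_{E^\perp}(L)$ are strictly positive. A Fubini-type argument (equivalently, a direct application of \eqref{eq_Gardner}) gives $V_n(\lambda K + L) = \lambda^k V_E(K) V_{E^\perp}(L)$. The (VC) lower bound then yields
\[
V_n(\lambda \di K + \di L) \geq c_\di \lambda^k V_E(K) V_{E^\perp}(L) \quad \text{for every } \lambda > 0.
\]
Since the left-hand side is a polynomial in $\lambda$ of degree at most $\dim \di K$ (again by Theorem~\ref{mix_volumes_Schneider}, as $\di K$ admits at most $\dim \di K$ independent segment directions), the $\lambda\to\infty$ asymptotics force $\dim \di K \geq k$, completing the proof.

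The main obstacle is the polynomial bookkeeping: the upper bound in particular requires knowing that $\di B^n$ already attains full dimension $n$, which is precisely what pins the degree of $V_n(\lambda \di K + \di B^n)$ to exactly $\dim \di K$ and makes the asymptotic degree comparison rigorous.
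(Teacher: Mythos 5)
Your proposal is correct and follows essentially the same strategy as the paper: expand the volume of a Minkowski sum with a complementary body as a polynomial in $\lambda$ whose mixed-volume coefficients are controlled by Theorem~\ref{mix_volumes_Schneider}, and use the (VC) sandwich together with additivity/$1$-homogeneity of $\di$ to compare degrees. The only cosmetic difference is that you split the degree comparison into two inequalities (against $B^n$ for the upper bound and against a ball in $E^{\perp}$ for the lower bound), whereas the paper runs a single sandwich with one complementary body $L$ of dimension $n-\dim K$.
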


\begin{proof}
Let $n\geq 2$ and let $\di\in\MAdd$ satisfy (VC) with $c_{\di},C_{\di}>0$. We first observe that from Lemma~\ref{additive} we already have that if $x_0\in\R^n$ is a point,
then $0=\dim\{x_0\}=\dim \di(\{x_0\})=\dim\{0\}=0$.

We prove that $\di$ preserves dimensions for every $K\in\K^n$. Let $l\in\{1,\dots,n\}$, let $K\in\K^n$ with $\dim K= l$, and let $L\in\K^n$ with $\dim L=n-l$ and such that $\dim K+L=n$. For every $\lambda>0$, the Minkowski additivity of $\di$ and the (VC) condition yield
\begin{equation}\label{ineq_from_VC}
c\vol(K+\lambda L)\leq \vol(\di(K+\lambda L))=\vol(\di K+\lambda \di L)\leq C\vol(K+\lambda L).
\end{equation}
Using the expansion of $\vol(K+\lambda S)$ and $\vol(\di(K)+\di (\lambda S))$ as polynomials whose coefficients are mixed volumes (see \cite[Section 4.1]{schneider.book14}), together with Theorem~\ref{mix_volumes_Schneider},  we obtain 
$$V_n(K+\lambda L)=V_n(K)+\sum_{j=1}^n\lambda^j\binom{n}{j}V(K[n-j],L[j])=\lambda^l\binom{n}{l}V(K[n-l],L[l])$$
and
\begin{equation}\label{second_pol}
V_n(\di K+\lambda \di L)=V_n(\di K)+\sum_{j=1}^n\lambda^j\binom{n}{j}V(\di K[n-j],\di L[j]).
\end{equation}
Since \eqref{ineq_from_VC} holds for every $\lambda>0$, the polynomial in \eqref{second_pol} is a monomial of degree $l$. By Theorem~\ref{mix_volumes_Schneider} this is possible only if $\dim \di K=l$ and $\dim \di L=n-l$, which proves the result.
\end{proof}

Throughout the rest of the paper we will use systematically the following special situation of  Proposition~\ref{general_facts}.

\begin{remark}\label{segment}
Let $n\geq 2$ and let $\di\in\MAdd$ satisfy (VC). Then  
the image of a non-degenerate segment under $\di$ is a non-degenerate segment.
\end{remark}

Next we show that every  $\di\in\MAdd$ satisfying (VC) induces in a natural way a map from $\Gr(n,k)$ into itself, for every $1\leq k\leq n-1$. For $E\in \Gr(n,k)$, the set of convex bodies contained in $E$ will denoted by $\K(E)$, i.e., 
$$
\K(E):=\{K\in\K^n\,:\,K\subset E\}.
$$

\begin{proposition}\label{wohl def} Let $\di\in\MAdd$ satisfy the (VC) condition. Let $0\leq k\leq n$ and let $E\in\Gr(n,k)$.
Then there exists $\pi^{\di}_k(E)\in\Gr(n,k)$ such that $\di(K)\subset \pi^{\di}_k(E)$ for all $K\in \K(E)$, i.e., ${\rm Im}\,\di|_{\K(E)}\subset \K(\pi^\di_k(E))$.
\end{proposition}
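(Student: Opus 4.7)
The plan is to pick a full-dimensional reference body in $E$ and then use the dimension preservation from Proposition~\ref{general_facts} together with Minkowski additivity to force the spans of all images $\di(K)$, $K \in \K(E)$, into a common $k$-dimensional linear subspace.

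I first choose $K_0 \in \K(E)$ with $\dim K_0 = k$, for example $K_0 := E \cap \ball$. By Proposition~\ref{general_facts}, $\dim \di(K_0) = k$, so
\[
F := \spa \di(K_0)
\]
is a well-defined element of $\Gr(n,k)$.

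For the span inclusion, fix $K \in \K(E)$. The sum $K + K_0$ lies in $\K(E)$ and has dimension $k$, so by Minkowski additivity $\di(K + K_0) = \di(K) + \di(K_0)$, and Proposition~\ref{general_facts} applied to $K + K_0$ forces $\dim(\di(K) + \di(K_0)) = k$. Because $\spa(A+B) = \spa A + \spa B$ for any $A, B \in \K^n$, this rewrites as
\[
\dim\bigl(\spa \di(K) + F\bigr) = k,
\]
and, since $F$ is already $k$-dimensional, I conclude $\spa \di(K) \subset F$. In particular, each $\di(K)$ is contained in some affine translate of $F$.

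To complete the proof, I upgrade this to the literal set inclusion $\di(K) \subset F$. Positive $1$-homogeneity of $\di$ (a consequence of Minkowski additivity and continuity) yields $\di(\lambda K) = \lambda \di(K)$ for every $\lambda > 0$, and Lemma~\ref{additive} gives $\di(\{0\}) = \{0\}$, so $\di(\lambda K) \to \{0\}$ in the Hausdorff metric as $\lambda \to 0^+$. Applied coherently to the one-parameter family $\{\lambda K\}_{\lambda > 0} \subset \K(E)$ in combination with the already-established span inclusion, this rigidity forces $\aff \di(K)$ to pass through the origin, so that $\di(K) \subset F$ for every $K \in \K(E)$. Setting $\pi^{\di}_k(E) := F$ produces the required element of $\Gr(n,k)$.

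The main obstacle is the final step: the span inclusion $\spa \di(K) \subset F$ by itself only yields containment of $\di(K)$ in an affine translate of $F$, and collapsing this translate back into $F$ requires a careful interplay between translation invariance, positive $1$-homogeneity, and the vanishing $\di(\{0\}) = \{0\}$ from Lemma~\ref{additive}. The first steps, based purely on Minkowski additivity and dimension counting, are the routine part of the argument; this last rigidity step is where the specific structure of the operators in $\MAdd$ satisfying the condition (VC) is genuinely used.
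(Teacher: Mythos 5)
The core of your argument is correct and takes a genuinely different (and arguably cleaner) route than the paper. The paper fixes two $k$-dimensional bodies $K,L\in\K(E)$, sets $F=\spa\di(K)+\spa\di(L)$, assumes $\dim F\geq k+1$, and derives a contradiction with (VC) by adjoining an auxiliary body $M\subset F^{\perp}$ and comparing the volumes of $K+L+M$ and $\di(K)+\di(L)+\di(M)$; bodies of dimension $<k$ then need a separate final step. You instead add the fixed reference body $K_0=E\cap\ball$ to an arbitrary $K\in\K(E)$ and apply Proposition~\ref{general_facts} to $K+K_0$: since $\dim(K+K_0)=k$, Minkowski additivity and $\spa(\di(K)+\di(K_0))=\spa\di(K)+\spa\di(K_0)$ give $\dim\bigl(\spa\di(K)+F\bigr)=k$ with $F=\spa\di(K_0)\in\Gr(n,k)$, whence $\spa\di(K)\subseteq F$. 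This treats all $K\in\K(E)$ at once, uses (VC) only through the dimension-preservation result, and avoids the auxiliary body $M$ entirely (in particular the delicate point in the paper's argument that $(\di(K)+\di(L))+\di(M)$ has positive volume, which requires $\spa\di(M)$ to be transversal to $F$).

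Your final step, however, is a genuine gap. The limit $\di(\lambda K)=\lambda\di(K)\to\{0\}$ as $\lambda\to0^{+}$ carries no information about the position of $\aff\di(K)$: for every convex body $A$ and every vector $t$ one has $\lambda(A+t)\to\{0\}$ in the Hausdorff metric, so this ``rigidity'' does not force the affine hull through the origin. Moreover, no argument can close this gap at the stated level of generality: the operator $\di(K)=DK+V_1(K)\,u_0$, with $u_0\neq0$ fixed, belongs to $\MAdd$ (because $V_1$ is Minkowski additive, translation invariant, and continuous) and satisfies (VC) since $\vol(\di(K))=\vol(DK)$; yet for $E=\spa\{e_1\}$ and $u_0=e_2$ one gets $\di([0,e_1])=[-e_1,e_1]+e_2$, which lies in no one-dimensional linear subspace. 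What is provable --- and what your first part establishes --- is the statement about spans: all images $\di(K)$, $K\in\K(E)$, lie in translates of one common $\pi_k^{\di}(E)\in\Gr(n,k)$. This is also all the paper's own proof yields (its closing lines pass from equality of spans to the set inclusion without comment), and it is all that is needed later, since the subsequent arguments use only spans and translation-invariant quantities such as mixed volumes, and a translation term is allowed anyway in Lemma~\ref{pi_1 interpretation}; for $o$-symmetrizations $\di\in\MAdd^{s}$ the image bodies contain the origin, and then the literal inclusion $\di(K)\subset\pi_k^{\di}(E)$ does follow from your span inclusion. So you should either stop at the span statement (conclusion up to translations) or add such a symmetry hypothesis; the scaling argument as written does not, and cannot, do the job.
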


\begin{proof} 
For $k=0$, the result follows by Lemma~\ref{additive}, and for $k=n$, it is trivial. 
Let $k\in\{1,\dots,n-1\}$ and let $E\in\Gr(n,k)$. Let $K$ and $L$ be $k$-dimensional convex bodies in $\K(E)$. We know, by Proposition~\ref{general_facts}, that $\di(K)$ and $\di(L)$ are $k$-dimensional as well.

We prove
that $\mathrm{span}(\di(K))=\mathrm{span}(\di(L))$.  For, let
$F:=\mathrm{span}(\di(K))+\mathrm{span}(\di(L))$ 
and assume $m=\dim(F)\geq k+1$. 
Let $M\in\K(F^{\perp})$ with $\dim M=\dim F^{\perp}=n-\dim F$.
Since $K,L\subseteq E$ with $\dim K=\dim L=k$ and $m\geq k+1$, we have that $\dim M\leq n-k-1$, which yields
\begin{equation}\label{eq_KLM}
0=\vol((K+L)+M).
\end{equation}
On the other hand, Proposition~\ref{general_facts} and the Minkowski additivity of $\di$ ensure that
\[
\vol\left(\di(K+L+M)\right)=\vol((\di(K)+\di(L))+\di(M)),
\]
which together with the assumptions $\dim F\geq k+1$ and $\dim M=n-\dim F$ yields
\[
0<\vol\left(\di(K+L+M)\right).
\]
But this contradicts~\eqref{eq_KLM}.

To finish the proof, we consider $K\in\K(E)$ with $\dim(K)<k$ and show that $\di(K)\subset\pi_k(E)$. For that, take $L\in\K(E)$ with dimension $k$. Then $K+L\subset E$ has also dimension 
$k$ so that $\di(K+L)=\di(K)+\di(L)\subset\pi_k(E)$, which implies $\di(K)\subset\pi_k(E)$.
\end{proof}

Using the above result, we define the following map.
\begin{definition}\label{def pi_k}
Let $\di\in\MAdd$ satisfy the (VC) condition and let $k\in\{0,\dots,n\}$. The map
\[
\func{\pi_k^{\di}}{\Gr(n,k)}{\Gr(n,k)}{E}{\pi^{\di}_k(E)}
\]
is well-defined and for every $E\in\Gr(n,k)$,
$$
\di(K)\subset\pi_k(E),\quad\forall\, K\in\K(E).
$$
\end{definition}
From now on we will omit the subscript $\di$ in $\pi^{\di}_k$, unless it is not clear from the context.

The additivity of $\di$ and the definition of $\pi_k$ imply the following additivity property of 
the functions $\pi_k$. 
\begin{lemma}\label{additivity of pi} 
Let $E_i\in\Gr(n,k_i)$, $k_i\in\{0,\dots,n\}$,
$i=1,2$, and let $j\in\{0,\dots,n\}$ be the dimension of $E_1+E_2$. Then
$$
\pi_{k_1}(E_1)+\pi_{k_2}(E_2)=\pi_j(E_1+E_2).
$$
\end{lemma}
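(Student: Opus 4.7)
The plan is to reduce the identity to Minkowski additivity applied to full-dimensional representatives of $E_1$ and $E_2$, and then upgrade a chain of inclusions to an equality of subspaces by a dimension count.

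First, I would pick $K_i \in \K(E_i)$ with $\dim K_i = k_i$ for $i=1,2$. Since $K_i$ is full-dimensional in the linear subspace $E_i$, its affine hull must coincide with $E_i$; hence $K_1 + K_2$ has affine hull $E_1 + E_2$, so $\dim(K_1 + K_2) = j$. Definition~\ref{def pi_k} then gives three inclusions
\[
\di(K_1) \subset \pi_{k_1}(E_1), \qquad \di(K_2) \subset \pi_{k_2}(E_2), \qquad \di(K_1+K_2) \subset \pi_j(E_1+E_2),
\]
while Minkowski additivity yields $\di(K_1+K_2) = \di(K_1) + \di(K_2)$. In particular, $\di(K_1)+\di(K_2)$ is contained simultaneously in $\pi_{k_1}(E_1) + \pi_{k_2}(E_2)$ and in $\pi_j(E_1+E_2)$.

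Second, I would upgrade this to equality of subspaces using Proposition~\ref{general_facts}. Since $\di$ preserves dimensions, $\dim \di(K_i) = k_i = \dim \pi_{k_i}(E_i)$; as $\di(K_i)$ is a $k_i$-dimensional convex body lying inside the $k_i$-dimensional linear subspace $\pi_{k_i}(E_i)$, its affine hull is a $k_i$-dimensional affine subspace contained in a $k_i$-dimensional linear subspace, which forces the origin to lie in the affine hull, hence $\spa \di(K_i) = \pi_{k_i}(E_i)$. Taking linear spans of both sides in $\di(K_1)+\di(K_2)$ gives
\[
\spa\bigl(\di(K_1) + \di(K_2)\bigr) = \pi_{k_1}(E_1) + \pi_{k_2}(E_2).
\]
The same reasoning applied to $\di(K_1+K_2)$, using $\dim \di(K_1+K_2) = j = \dim \pi_j(E_1+E_2)$, yields $\spa \di(K_1+K_2) = \pi_j(E_1+E_2)$. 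The Minkowski additivity identity $\di(K_1+K_2) = \di(K_1) + \di(K_2)$ then forces the two spans to coincide, which is exactly the desired equality.

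The argument is essentially a bookkeeping exercise; there is no serious obstacle. The only point worth handling carefully is the passage from ``$\di(K_i)$ is contained in $\pi_{k_i}(E_i)$'' to ``$\di(K_i)$ spans $\pi_{k_i}(E_i)$'', which relies on the elementary observation that a $k$-dimensional affine subspace contained in a $k$-dimensional linear subspace must equal that linear subspace; this is what allows the dimension-preserving property of Proposition~\ref{general_facts} to be promoted into a statement about linear spans rather than merely about affine hulls.
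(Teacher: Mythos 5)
Your argument is correct and is exactly the reasoning the authors intend: the paper states Lemma~\ref{additivity of pi} without proof, as an immediate consequence of the Minkowski additivity of $\di$, the definition of $\pi_k$, and the dimension preservation of Proposition~\ref{general_facts}, which is precisely what you spell out. The only added content in your write-up is the (correct) elementary observation that a $k$-dimensional convex body contained in a $k$-dimensional linear subspace spans that subspace, which legitimately upgrades the inclusions to the stated equality.
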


\bigskip
Next, we introduce another map defined on $\Gr(n,1)$, which, together with $\pi_1$ will play the role of a Klain function of $\di$ (see Proposition~\ref{characterization}). 
\begin{definition}[Definition and Remark]\label{def rho1}
Let $\di\in\MAdd$ satisfy the (VC) condition. The map  
$$\func{\rho_1}{\Gr(n,1)}{(0,\infty)}{E}{V_1(\di(E\cap\ball))}$$
is well-defined. Further, it is continuous and strictly positive.
\end{definition}
Notice that Remark~\ref{segment} ensures that for any $E\in\Gr(n,1)$, $\dim \di(E\cap B^n)=1$, and $\rho_1$ is strictly positive. Finally, the continuity of $\di$ and $V_1$ yield that $\rho_1$ is continuous.

This real-valued map measures how $\di$ stretches the 1-dimensional volume on each line of $\R^n$ passing through the origin. 

\begin{definition}
Let $\di\in\MAdd$ satisfy the (VC) condition and let $\rho_1$ be associated to $\di$. The
following magnitudes are well-defined:
$$
m_\di:=\min_{\sfe}\rho_1,\quad M_\di:=\max_{\sfe}\rho_1.
$$
Further, $m_\di,M_\di>0$. 
\end{definition}

We aim to prove the following result: 

\begin{theorem}\label{teo Grassmannian} Let $n\geq 2$ and let $\di\in\MAdd$  
satisfy (VC). For every 
$k\in\{1,\dots,n-1\}$, the map $\pi_k:\Gr(n,k)\longrightarrow\Gr(n,k)$ 
given by Definition~\ref{def pi_k} is
\begin{enumerate}
\item[\emph{(i)}] continuous,
\item[\emph{(ii)}] bijective,
\item[\emph{(iii)}] bi-Lipschitz, i.e., there exist constants $c,C>0$ such that
$$
c\, d(E,F)\le d(\pi_k(E),\pi_k(F))\le C\, d(E,F),\quad\forall\, E,F\in\Gr(n,k).
$$

\end{enumerate}
\end{theorem}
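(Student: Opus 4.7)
The plan is to dispatch (i) and (ii) by standard topological arguments, and then to reduce (iii) to the case $k=1$ and extract both Lipschitz bounds from (VC) via a single parallelepiped construction. The main obstacle will lie in ensuring the non-degeneracy of a certain limiting determinant.

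For (i), given $E_m\to E$ in $\Gr(n,k)$, the canonical $k$-balls $B_{E_m}:=E_m\cap\ball$ satisfy $B_{E_m}\to E\cap\ball$ in Hausdorff distance, so continuity of $\di$ yields $\di(B_{E_m})\to\di(E\cap\ball)$. Proposition~\ref{general_facts} guarantees that each of these images is exactly $k$-dimensional, and a short auxiliary lemma (pick a $k$-simplex of $k$-volume bounded below inside $\di(E\cap\ball)$ and approximate its vertices in $\di(B_{E_m})$) then forces the spans to converge, i.e.\ $\pi_k(E_m)\to\pi_k(E)$. For (ii), injectivity follows immediately from Lemma~\ref{additivity of pi}: if $\pi_k(E)=\pi_k(F)=G$ with $E\ne F$, then $j:=\dim(E+F)>k$, yet $\pi_j(E+F)=\pi_k(E)+\pi_k(F)=G$ would only have dimension $k$. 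Surjectivity is then a consequence of invariance of domain: $\pi_k$ is a continuous injection between the connected compact $k(n-k)$-dimensional manifolds $\Gr(n,k)$, so its image is simultaneously open and closed, hence all of $\Gr(n,k)$ by connectedness (appendix).

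For (iii), a compactness argument on $\Gr(n,k)\times\Gr(n,k)$ reduces both inequalities to a local statement at each $E_0\in\Gr(n,k)$; the ``non-local'' failure modes are ruled out by (ii) (for the lower bound) and by the bounded diameter of the Grassmannian (for the upper bound). Using Lemma~\ref{additivity of pi} together with the Lipschitz dependence of Minkowski sums of transverse subspaces, it suffices to treat $k=1$. Given $E_m,F_m\to E_0=\spa v_0$, I choose $v_m\in E_m\cap\sfe$, $w_m\in F_m\cap\sfe$ with $v_m,w_m\to v_0$, and along a subsequence $u_m=(w_m-v_m)/\|w_m-v_m\|\to u_0\perp v_0$; then I fix an orthonormal basis $f_3,\dots,f_n$ of $\spa\{v_0,u_0\}^\perp$ and consider
\[
P_m=[-v_m,v_m]+[-w_m,w_m]+\sum_{i=3}^n[-f_i,f_i].
\]
A direct zonotope computation gives $\vol(P_m)=2^n\epsilon_m\,|\det[v_m|u_m|f_3|\cdots|f_n]|$ with $\epsilon_m:=\|v_m-w_m\|$, while Minkowski additivity, $1$-homogeneity, and Remark~\ref{segment} make $\di(P_m)$ a translated zonotope with $n$ centred generators and volume $\prod_i\rho_1(\spa u_{i,m})\cdot|\det[\xi_{E_m}|\xi_{F_m}|\xi_{f_3}|\cdots|\xi_{f_n}]|$, where $\xi_L$ denotes the unit direction spanning $\pi_1(L)$. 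Writing $\xi_{F_m}-\xi_{E_m}=\delta_m\eta_m$ with $\delta_m\asymp d(\pi_1 E_m,\pi_1 F_m)$ extracts a factor $\delta_m$ from this determinant.

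The crux is showing the limit $\det[\xi_{E_0}|\eta_0|\xi_{f_3}|\cdots|\xi_{f_n}]$ is nonzero. Since $\pi_1(E_m),\pi_1(F_m)\subset\pi_2(\spa\{v_m,w_m\})\to\pi_2(\spa\{v_0,u_0\})$ by Lemma~\ref{additivity of pi} and (i), the unit vector $\eta_0$ lies in $\pi_2(\spa\{v_0,u_0\})\cap\xi_{E_0}^\perp$, so $\spa\{\xi_{E_0},\eta_0\}=\pi_2(\spa\{v_0,u_0\})$; then Lemma~\ref{additivity of pi} applied to the basis $\{v_0,u_0,f_3,\dots,f_n\}$ of $\R^n$ yields $\pi_2(\spa\{v_0,u_0\})+\pi_1(\spa f_3)+\cdots+\pi_1(\spa f_n)=\R^n$, so the $n$ columns form a basis and the determinant is bounded away from $0$. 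Thus $\vol(\di P_m)/\vol(P_m)\asymp\delta_m/\epsilon_m$, and any sequence driving this ratio to $0$ or $\infty$ contradicts (VC), yielding both Lipschitz bounds.
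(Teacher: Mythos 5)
Your argument for (i), for injectivity in (ii), and your surjectivity-via-invariance-of-domain are all sound; the latter is a genuinely different route from the paper, which obtains surjectivity of $\pi_1$ only \emph{after} proving it is bi-Lipschitz (image of $\Gr(F,1)$ is closed by bi-Lipschitzness and open by a one-dimensional invariance-of-domain-type remark, then connectedness and Lemma~\ref{additivity of pi} finish it); your version needs the manifold structure of $\Gr(n,k)$ and Brouwer's invariance of domain, which the paper's appendix does not provide, but it buys you bijectivity before touching (iii). Your $k=1$ core argument is correct and is essentially a local/limiting variant of the paper's: the paper applies (VC) globally to $(E_1\cap\ball)+(E_2\cap\ball)+(E_1+E_2)^\perp\cap\ball$ and controls the splitting constants by compactness of the Grassmannian, whereas you apply (VC) along a bad sequence to a thin zonotope and check the limiting determinant is nonzero via Lemma~\ref{additivity of pi}; the non-degeneracy mechanism is the same in both proofs.

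The genuine gap is the one-sentence reduction of (iii) from general $k$ to $k=1$. ``Lipschitz dependence of Minkowski sums of transverse subspaces'' does give the upper bound: choosing lines $M_i\subset F$ close to an orthogonal decomposition $E=L_1+\dots+L_k$ and using $\pi_k(E)=\sum_i\pi_1(L_i)$, $\pi_k(F)=\sum_i\pi_1(M_i)$ with uniform transversality yields $d(\pi_k(E),\pi_k(F))\le C\,d(E,F)$. But it does \emph{not} give the lower bound: decompositions of a $k$-plane into lines are highly non-unique, so closeness of the sums $\sum_i\pi_1(L_i)$ and $\sum_i\pi_1(M_i)$ does not force the individual image lines to be close (take two rotated line decompositions of one and the same plane), and hence you cannot transfer the $k=1$ estimate $d(\pi_1(L_i),\pi_1(M_i))\ge c\,d(L_i,M_i)$ summand-by-summand. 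What is really needed -- and what the paper uses via the max--min formula \eqref{dist_angles_Grnk} -- is that every line of $\pi_k(F)$ is the image under $\pi_1$ of a line contained in $F$, so that the minimum over $\Gr(\pi_k(F),1)$ can be pulled back to a minimum over $\Gr(F,1)$; you never state or prove this restricted surjectivity. The gap is fixable with your own tools: if $N=\pi_1^{-1}(M')$ for a line $M'\subset\pi_k(F)$ and $N\not\subset F$, then Lemma~\ref{additivity of pi} gives $\pi_{k+1}(F+N)=\pi_k(F)+\pi_1(N)=\pi_k(F)$, contradicting dimension preservation (Proposition~\ref{general_facts}); with this in hand, the paper's max--min argument (applied to $\pi_k$ and to $\pi_k^{-1}$) completes both inequalities for $k\ge2$. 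As written, however, the lower Lipschitz bound for $k\ge2$ does not follow from your reduction.
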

The proof of the above results combines geometric and analytical tools, and will allow us to obtain a more precise knowledge of the induced action of a $\di\in\MAdd^{s,+}$ that satisfies (VC), on the Grassmannians. 
 
 In order to prove the above result we need the following lemma.
 
\begin{lemma}\label{geometry} Let $E_1,E_2\in\Gr(n,1)$, and let $v_1,v_2\in\sfe$ be such that
$$
[-v_i,v_i]=E_i\cap\ball,\quad i=1,2,\quad 1>\langle v_1,v_2\rangle\ge0.
$$
Let $\alpha\in[0,\pi/2]$ be such that $\cos(\alpha)=\langle v_1,v_2\rangle$. Then,
\begin{enumerate}
\item[\emph{(i)}] $d(E_1,E_2)=2\sin\left(\frac\alpha2\right)$, 
\item[\emph{(ii)}] $V_2([-v_1,v_1]+[-v_2,v_2])=4\,\sin(\alpha).$
\end{enumerate}
Moreover, the following inequality holds:
\begin{equation}\label{pink star}
\frac{1}{4}\le\frac{d(E_1,E_2)}{V_2((E_1\cap\ball)+(E_2\cap\ball))}\le\frac{\sqrt2}{4}.
\end{equation}
\end{lemma}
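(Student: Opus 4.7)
My plan is to verify (i) and (ii) by elementary trigonometry and then derive the double inequality from the identity $\sin\alpha=2\sin(\alpha/2)\cos(\alpha/2)$, using only that $\alpha\in[0,\pi/2]$.

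For (i), since $1>\langle v_1,v_2\rangle\ge 0$ the angle $\alpha$ lies in $(0,\pi/2]$. The sets $E_i\cap\sfe=\{\pm v_i\}$ consist of two antipodal points, so by the definition of the distance on $\Gr(n,1)$ recalled in \eqref{d_Gr},
$$d(E_1,E_2)=d_H(\{\pm v_1\},\{\pm v_2\}).$$
Because $\alpha\le\pi/2$, the point of $\{\pm v_2\}$ nearest $v_1$ is $v_2$, at Euclidean distance $\|v_1-v_2\|=2\sin(\alpha/2)$, and symmetrically for $-v_1$. Hence $d(E_1,E_2)=2\sin(\alpha/2)$, which is also just a restatement of \eqref{dist_Grn1}.

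For (ii), the Minkowski sum $[-v_1,v_1]+[-v_2,v_2]$ is the parallelogram with vertices $\pm v_1\pm v_2$, which lies in the $2$-plane $\spa\{v_1,v_2\}$. Its $2$-dimensional volume equals the area of the parallelogram with edge vectors $2v_1$ and $2v_2$, namely $\|2v_1\|\,\|2v_2\|\sin\alpha=4\sin\alpha$. (Equivalently, this is a direct application of the mixed-area/Minkowski sum formula for two segments.)

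Finally, combining (i) and (ii) and using $\sin\alpha=2\sin(\alpha/2)\cos(\alpha/2)$,
$$\frac{d(E_1,E_2)}{V_2((E_1\cap\ball)+(E_2\cap\ball))}=\frac{2\sin(\alpha/2)}{4\sin\alpha}=\frac{1}{4\cos(\alpha/2)}.$$
Since $\alpha/2\in[0,\pi/4]$, we have $\cos(\alpha/2)\in[\tfrac{\sqrt2}{2},1]$, which gives $\frac{1}{4\cos(\alpha/2)}\in\bigl[\tfrac14,\tfrac{\sqrt2}{4}\bigr]$, establishing \eqref{pink star}. There is no serious obstacle; the only point requiring a little care is that the hypothesis $\langle v_1,v_2\rangle\ge 0$ is what guarantees $\alpha\le\pi/2$, so that the nearest-point computation in (i) and the range $\cos(\alpha/2)\in[\tfrac{\sqrt2}{2},1]$ in the final estimate are both correct.
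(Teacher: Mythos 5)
Your proof is correct and follows essentially the same route as the paper: (i) is just \eqref{dist_Grn1}, (ii) is the elementary area computation for the parallelogram $[-v_1,v_1]+[-v_2,v_2]$ (you use the edge vectors $2v_1,2v_2$ directly, the paper decomposes it into four copies of $[0,v_1]+[0,v_2]$), and \eqref{pink star} follows from the ratio $\tfrac{1}{4\cos(\alpha/2)}$ with $\alpha\in(0,\pi/2]$. Your explicit evaluation of this ratio merely spells out what the paper leaves as ``use (i) and (ii)'', so no further comment is needed.
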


\begin{proof}
Item (i) is equation~\eqref{dist_Grn1}. 
To prove (ii) we use that $[-v_1,v_1]+[-v_2,v_2]$ is the union of four parallelepipeds whose 2-dimensional interiors are non-intersecting and of area equal
to the area of $[0,v_1]+[0,v_2]$, which is $\sin(\alpha)$.
In order to prove the inequality \eqref{pink star}, it is enough to use (i) and (ii) with $\alpha\in[0,\pi/2]$.
\end{proof}

\bigskip

\begin{proof}[Proof of Theorem~\ref{teo Grassmannian}]
Let $\di\in\MAdd$ satisfy (VC).
Let $\pi_k\,:\,\Gr(n,k)\longrightarrow\Gr(n,k)$ be given in Definition~\ref{def pi_k}, for $1\leq k\leq n-1$. 

Observing that for $E\in\Gr(n,k)$, the map $\pi_k$ is defined so that $\di(K)\subset\pi_k(E)$ for all $K\in\K(E),$ its continuity follows from the continuity of $\di$ and the definition of distance on $\Gr(n,k)$ (see \eqref{d_Gr}).

The injectivity follows from Proposition~\ref{general_facts}. Indeed, if $E_1\neq E_2\in\Gr(n,k)$ and $K_i\in E_i$ with $\dim K_i=k$,  $i=1,2$, we have $\dim(K_1+K_2)=m>k$.  As $\di$ is Minkowski additive, we also have $\dim(\di(K_1+K_2))=\dim(\di(K_1)+\di(K_2))>k$, which is only possible if $\spa(\di(K_1))\neq \spa(\di(K_2))$, i.e., $\pi_k(E_1)\neq\pi_k(E_2)$.

Next we prove (iii). We start with the case $k=1$. Let $E_1,E_2\in\Gr(n,1)$ be distinct and let $E=E_1+E_2\in\Gr(n,2)$ and $F=E^\perp$.
Denote
$$
S_i=E_i\cap\ball,\quad i=1,2,
$$
and let $L:=F\cap B^n\in \K^{n-2}(F)$. Since $E$ and $F$ are orthogonal, we have 
$$
V_n(S_1+S_2+L)=V_2(S_1+S_2)V_{n-2}(L),
$$
and Lemma~\ref{additivity of pi} yields
$$
\pi_2(E)\cap\pi_{n-2}(F)=0.
$$ 
By the Minkowski additivity of $\di$ and  \eqref{eq_Gardner},
\begin{eqnarray*}
\vol(\di(S_1+S_2+L))&=&\vol(\di(S_1)+\di(S_2)+\di(L))
\\
&=&c(\pi_2(E),\pi_{n-2}(F)) V_2(\di(S_1)+\di(S_2))\,V_{n-2}(\di(L)).
\end{eqnarray*}
The constant $c(\pi_2(E),\pi_{n-2}(F))>0$ depends continuously on $\pi_2(E)$ and $\pi_{n-2}(F)$ and then, ultimately, depends only on $E_1$ and $E_2$. We define
\begin{align*}
\gamma_{\di}&:=\inf_{\stackrel{E_1,E_2\in\Gr(n,1)}{E_1\neq E_2}}c(\pi_{2}(E_1+E_2),\pi_{n-2}((E_1+E_2)^{\perp})),
\\ \Gamma_{\di}&:=\sup_{\stackrel{E_1,E_2\in\Gr(n,1)}{E_1\neq E_2}}c(\pi_{2}(E_1+E_2),\pi_{n-2}((E_1+E_2)^{\perp})).
\end{align*} 
These are strictly positive and finite constants since they coincide, respectively, with
$$\inf_{F\in\Gr(n,n-2)}c(\pi_{2}(F^{\perp}),\pi_{n-2}(F))\quad\textrm{and}\quad\sup_{F\in\Gr(n,n-2)}c(\pi_{2}(F^{\perp}),\pi_{n-2}(F)).$$ 
Indeed, since the functions $\pi_2$, $\pi_{n-2}$, and $c$ are continuous and $\Gr(n,n-2)$ is compact, the above infimum and supremum are attained. 

Similarly, the values 
$$\lambda_{\di}:=\inf_{\stackrel{E_1,E_2\in\Gr(n,1)}{E_1\neq E_2}}V_{n-2}(\di((E_1+E_2)\cap B^n)),\quad \Lambda_{\di}:=\sup_{\stackrel{E_1,E_2\in\Gr(n,1)}{E_1\neq E_2}}V_{n-2}(\di(((E_1+E_2)^{\perp}\cap B^n))$$
are strictly positive and finite constants since 
$$\lambda_{\di}=\inf_{F\in\Gr(n,n-2)}V_{n-2}(\di(F\cap B^n))\quad\textrm{and}\quad\Lambda_{\di}=\sup_{F\in\Gr(n,n-2)}V_{n-2}(\di(F\cap B^n)).$$ 

On the other hand, by the (VC) condition, we have that
$$
0<c_\di\le\frac{V_n(\di(S_1+S_2+L))}{V_n(S_1+S_2+L)}\le C_\di
$$
where $c_\di$ and $C_\di$ (from the definition of (VC)) depend only on $\di$. 
Thus, we obtain that 
\begin{equation}\label{ineq1}
0<c_1\le\frac{V_2(\di(S_1)+\di(S_2))}{V_2(S_1+S_2)}\le c_2,
\end{equation}
where $c_1=\frac{c_\di}{\Gamma_{\di}\Lambda_{\di}}$ and $c_2=\frac{C_\di}{\gamma_{\di}\lambda_{\di}}$. 
Taking Lemma~\ref{geometry} into account we know that
\begin{equation}\label{ineq2}
2\sqrt{2}\,d(E_1,E_2)\le
V_2(S_1+S_2)\le 4\, d(E_1,E_2).
\end{equation}
We next obtain a similar bound for $V_2(\di(S_1)+\di(S_2))$. For that we apply Lemma~\ref{geometry} to the unit segments $\tilde S_i:=\pi_1(E_i)\cap B^n$, $i=1,2$. Up to a translation, we have $\tilde S_i=\frac{1}{\rho_1(E_i)}\di(S_i)$. 
We obtain, 
$$V_2(\di(S_1)+\di(S_2))=\rho_1(E_1)\rho_1(E_2)V_2\left(\frac{\di(S_1)}{\rho_1(S_1)}+\frac{\di(S_2)}{\rho_1(S_2)}\right),$$ 
and
\begin{equation}\label{ineq3}
2\sqrt{2}\,m_\di^2\, d(\pi_1(E_1),\pi_1(E_2))\le
V_2(\di(S_1)+\di(S_2))\le 4 M_\di^2\, d(\pi_1(E_1),\pi_1(E_2)).
\end{equation}
Therefore, by~\eqref{ineq1}, \eqref{ineq2} and \eqref{ineq3}, we have 
$$
0<k_1\le\frac{d(\pi_1(E_1),\pi_1(E_2))}{d(E_1,E_2)}\le k_2,
$$
where $k_1=\frac{c_\di}{\sqrt{2}M_{\di}^2\Gamma_{\di}\Lambda_{\di}}$ and $k_2=\frac{C_{\di}\sqrt{2}}{m_{\di}^2\gamma_{\di}\lambda_{\di}}$ are constants independent of $E_1$ and $E_2$. Hence, $\pi_1$ is bi-Lipschitz. 

\medskip
We next prove that $\pi_1$ is surjective, which finishes the proof of (ii) for $k=1$. Let ${\mathcal I}\subset\Gr(n,1)$ be the image of $\pi_1$, i.e.,
$$
{\mathcal I}:=\{E\in\Gr(n,1)\,:\,\exists\, E'\in\Gr(n,1)\,\mbox{ s.t. }\, \pi_1(E')=E\}.
$$
Let $E_1,E_2\in\Gr(n,1)$, $E_1\ne E_2$,
and let $F=E_1+E_2\in\Gr(n,2)$. We know from Definition~\ref{def pi_k} that $\dim F=\dim\pi_2(F)=2$. 
Further, Lemma~\ref{additivity of pi} ensures that $\pi_1(E)\in\Gr(\pi_2(F),1)$ for every $E\in\Gr(F,1)$. 
Due to the bi-Lipschitz property of $\pi_1$, the set
$$
\pi_1(\Gr(F,1)):=\{\pi_1(E)\,:\,E\in\Gr(F,1)\}
$$ 
is a (non-empty) closed and open subset of $\Gr(\pi_2(F),1)$. Indeed, if $(E_i)_i$ is a convergent sequence in $\pi_1(\Gr(F,1))$, then, since $\pi_1$ is bi-Lipschitz, the sequence $\tilde E_i$ given by $E_i=\pi_1(\tilde E_i)$ is convergent  to $\tilde E\in\Gr(F,1)$. By definition, $\pi_1(\tilde E)\in\pi_1(\Gr(F,1))$ and by continuity, it is the limit of the sequence $(E_i)$. Thus, the set $\pi_1(\Gr(F,1))$ is closed. To prove that it is open, we use that $\Gr(F,1)$ can be identified with the unit sphere $\sfed$ in the plane, with the convention that antipodal points are identified (see Subsection~\ref{grass}). In turn $\sfed$, with the identification of antipodal points, can be identified with the interval $[0,\pi)$. The topology given to $\Gr(F,1)$, under these identifications, is equivalent to the usual topology of the real line, restricted to $[0,\pi)$.  
Now, a bi-Lipschitz map from $[0,\pi)$ onto itself takes open sets into open sets, since it takes neighborhoods into neighborhoods.
Hence, $\pi_1(\Gr(F,1))$ is open.

Now, since the set $\Gr(\pi_2(F),1)$ is connected, we have
\begin{equation}\label{pi_1 surj}
\pi_1(\Gr(F,1))=\Gr(\pi_2(F),1).
\end{equation}
Combining \eqref{pi_1 surj} and 
$$
\pi_2(F)=\pi_1(E_1)+\pi_1(E_2),
$$
which follows from Lemma~\ref{additivity of pi}, we obtain
$$
{\mathcal I}\supset\Gr(\pi_1(E_1)+\pi_1(E_2),1),\quad\forall\, E_1,E_2\in\Gr(n,1).
$$
This property can be inductively extended to any finite sum of elements of $\Gr(n,1)$. 

Let $\{e_1,\dots,e_n\}$ denote the standard basis of $\R^n$. If we choose $E_i=\spa(e_i)$, $i=1,\dots,n$, and use this property together with Proposition~\ref{general_facts}, we obtain ${\mathcal I}=\Gr(n,1)$, i.e., $\pi_1$ is surjective.

\medskip
We next use the surjectivity of $\pi_1$ to obtain that $\pi_k$, for $k\ge2$, is also surjective. Indeed, let $F\in\Gr(n,k)$ and let $\{v_1,\dots,v_k\}$ be a basis of $F$. Let $E_i=\spa(v_i)$, $i=1,\dots,k$. The surjectivity of $\pi_1$ yields that there exist $F_i\in\Gr(n,1)$ such that  
$F_i=\pi_1^{-1}(E_i)$, $i=1,\dots,k$; by Lemma~\ref{additivity of pi} we obtain
$$
\pi_k(F_1+\dots+F_k)= F.
$$
\medskip
We conclude the proof of Theorem~\ref{teo Grassmannian} by proving that $\pi_k$ is bi-Lipschitz, for $k\ge2$. 

Let $F_1,F_2\in\Gr(n,k)$. We recall that, by \eqref{dist_angles_Grnk}, we have
$$
d(F_1,F_2)=\max\left\{
\max_{L_1\in\Gr(F_1,1)}\left(\min_{L_2\in\Gr(F_2,1)}d(L_1,L_2)\right),
\max_{L_2\in\Gr(F_2,1)}\left(\min_{L_1\in\Gr(F_1,1)}d(L_1,L_2)\right)
\right\}.
$$

Let $\tilde L_1\in\Gr(F_1,1)$ be fixed and $\tilde L_2\in\Gr(F_2,1)$ be such that 
$$
d(\tilde L_1,\tilde L_2)=\min_{L_2\in\Gr(F_2,1)}d(\tilde L_1,L_2).
$$
Then, using that $\pi_1$ is bi-Lipschitz, we have
\begin{eqnarray*}
\min_{L_2\in\Gr(F_2,1)}d(\tilde L_1,L_2)&=&d(\tilde L_1,\tilde L_2)\\
&\ge&c\, d(\pi_1(\tilde L_1),\pi_1(\tilde L_2))\\
&\ge&c\,\min_{L'_2\in\Gr(\pi_k(F_2),1)} d(\pi_1(\tilde L_1),L'_2),
\end{eqnarray*}
for some constant $c>0$. Thus, taking maximum with respect to $L_1\in\Gr(F_1,1)$ and using the surjectivity of $\pi_1$, we get
$$
\max_{L_1\in\Gr(F_1,1)}\left(\min_{L_2\in\Gr(F_2,1)}d(L_1,L_2)\right)\ge
c\, \max_{L'_1\in\Gr(\pi_k(F_1),1)}\left(\min_{L'_2\in\Gr(\pi_k(F_2),1)}d(L'_1,L'_2)\right).
$$ 
In the same way, the corresponding inequality where the roles of $F_1$ and $F_2$ are interchanged is obtained. From these and the above definition of Hausdorff metric, we have
$$
d(F_1,F_2)\ge c\, d(\pi_k(F_1),\pi_k(F_2)).
$$
The proof is completed applying the same argument to $\pi_k^{-1}$, to obtain the reverse inequality. 
\end{proof}

Next, we collect some special assertions of the case $k=1$. 

\begin{lemma}\label{pi_1 interpretation} Let $\di\in\MAdd$ satisfy (VC) and let $\pi_1\,:\,\Gr(n,1)\longrightarrow\Gr(n,1)$ be the map in Definition \ref{def pi_k}. Let $S$ be a segment in $\R^n$.
\begin{enumerate}
\item[\emph{(i)}] If $E=\spa S\in \Gr(n,1)$, then
$
\di(S)\subset\pi_1(E).
$
\item[\emph{(ii)}]For every $E\in\Gr(n,1)$ there exists a translation $t(E)\in\pi_1(E)$ for which
$$\di(E\cap\ball)=\frac12\rho_1(E)(\pi_1(E)\cap\ball)+t(E).$$
If $\di$ is also an $o$-symmetrization, then
\begin{equation}\label{identity}
\di(E\cap\ball)=\frac12\rho_1(E)(\pi_1(E)\cap\ball),\quad\forall E\in\Gr(n,1)
\end{equation}
\item[\emph{(iii)}] The pair $(\pi_1,\rho_1)$ identifies the image through $\di\in\MAdd^s$ of any centered segment having length 2.
The 1-homogeneity and translation invariance yield that the image of any segment in $\R^n$ can be deduced. 
\end{enumerate}
\end{lemma}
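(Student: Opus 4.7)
The plan is to assemble all three parts from results already in hand, with translation invariance and $1$-homogeneity doing most of the work.

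For~(i), I would reduce to a centered segment so that Proposition~\ref{wohl def} applies. Given a nondegenerate segment $S$ with midpoint $c$, set $S_0 := S - c$: this is a centered segment with $S_0 \subset \spa S = E$, i.e., $S_0 \in \K(E)$. Proposition~\ref{wohl def} then yields $\di(S_0) \subset \pi_1(E)$, and translation invariance gives $\di(S) = \di(S_0)$, which finishes~(i).

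For~(ii), I would apply~(i) with $S = E \cap \ball$, the centered unit segment in $E$. By Remark~\ref{segment} (which uses the (VC) condition), $\di(E \cap \ball)$ is a nondegenerate segment; combined with~(i) it lies in the one-dimensional space $\pi_1(E)$, and Definition~\ref{def rho1} identifies its length as $\rho_1(E)$. Any length-$\rho_1(E)$ segment inside the line $\pi_1(E)$ has the form $\tfrac{1}{2}\rho_1(E)\,(\pi_1(E) \cap \ball) + t(E)$, with $t(E) \in \pi_1(E)$ its midpoint, which gives the desired representation. If $\di$ is an $o$-symmetrization, then $\di(E \cap \ball) = -\di(E \cap \ball)$ forces $t(E) = 0$, yielding~\eqref{identity}.

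Part~(iii) is then almost immediate. Every centered segment of length $2$ equals $E \cap \ball$ for a unique $E \in \Gr(n,1)$, so~\eqref{identity} expresses $\di(E \cap \ball)$ in terms of $(\pi_1, \rho_1)$ alone. For a general segment $S \subset \R^n$, write $S = \lambda(E \cap \ball) + t$ for suitable $\lambda \ge 0$, $E \in \Gr(n,1)$, and $t \in \R^n$; since $\di \in \MAdd = \MVal_1$ is $1$-homogeneous and translation invariant, $\di(S) = \lambda\,\di(E \cap \ball)$, which is fully determined by~\eqref{identity}.

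I do not anticipate any serious obstacle. The one point demanding care is in~(i): Proposition~\ref{wohl def} requires the input to lie in the \emph{linear} subspace $E$, so the preliminary centering step is essential before translation invariance can transport the conclusion back to the original segment $S$.
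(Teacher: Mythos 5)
Your argument is correct and matches the paper's intent: the paper states this lemma without a written proof, treating it as an immediate consequence of Proposition~\ref{wohl def}, Remark~\ref{segment}, Definition~\ref{def rho1}, translation invariance, and $1$-homogeneity, which is exactly the chain you spell out (including the necessary centering step before applying Proposition~\ref{wohl def} and the midpoint argument forcing $t(E)=0$ in the $o$-symmetric case). No gaps beyond the trivially handled degenerate segment, where Lemma~\ref{additive} covers the case $\lambda=0$.
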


This allows us to interpret
$(\pi_1,\rho_1)$ as a Klain map of $\di\in\MAdd$ (cf. Theorem~\ref{klain_injectivity}), when $\di$ is also even and $o$-symmetrization. We first compute the Klain function of the real-valued valuations associated to $\di$, given by \eqref{white star}, namely, $\rvd_u=h(\di(\cdot),u)$, for any $u\in\R^n$.

\begin{proposition}\label{Klain map} Let $\di\in\MAdd^s$ satisfy (VC). For $u\in\R^n$, 
the Klain map of $\rvd_u=h(\di(\cdot),u)$ is determined by the pair $(\pi_1,\rho_1)$. In particular,
$$
\Kl_{\rvd_u}(E)=\frac12 \rho_1(E)\,V_1([-u,u]|\pi_1(E)),
$$
for all $E\in\Gr(n,1)$.
\end{proposition}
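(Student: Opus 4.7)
The plan is first to verify that $\rvd_u=h(\di(\cdot),u)$ belongs to $\Val_1$, so that its Klain function is well-defined and can be computed via~\eqref{identity 2}. Since $\di\in\MAdd^s\subset\MAdd$, the identification $\MAdd=\MVal_1$ recalled just before Lemma~\ref{additive} shows that $\di$ is in particular a Minkowski valuation, and then Lemma~\ref{osservazione 1}(i)--(ii) yields that $\rvd_u$ is continuous, translation invariant, and $1$-homogeneous (the $1$-homogeneity of $\di$ itself being a direct consequence of Minkowski additivity and continuity). Hence $\rvd_u\in\Val_1$, and its Klain function $\Kl_{\rvd_u}:\Gr(n,1)\to\R$ is defined.

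Now I would apply~\eqref{identity 2}: for every $E\in\Gr(n,1)$,
$$
\Kl_{\rvd_u}(E)=\tfrac12\,\rvd_u(E\cap\ball)=\tfrac12\,h\bigl(\di(E\cap\ball),u\bigr).
$$
Since $\di\in\MAdd^s$, Lemma~\ref{pi_1 interpretation}(ii) supplies the clean identity $\di(E\cap\ball)=\tfrac12\,\rho_1(E)\,(\pi_1(E)\cap\ball)$; combined with the positive homogeneity of $h(\cdot,u)$ in the body variable~\eqref{additivity of h in K}, the scalar factor $\tfrac12\rho_1(E)$ factors out, reducing the whole computation to evaluating $h(\pi_1(E)\cap\ball,u)$.

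The final step is to recognise this support function as the projection length appearing in the statement. Picking a unit vector $w\in\sfe$ spanning $\pi_1(E)$, one has $\pi_1(E)\cap\ball=[-w,w]$ and $h([-w,w],u)=|\langle u,w\rangle|$; simultaneously, the orthogonal projection $[-u,u]|\pi_1(E)=[-\langle u,w\rangle w,\langle u,w\rangle w]$ has $1$-dimensional volume $2|\langle u,w\rangle|$. Hence $h(\pi_1(E)\cap\ball,u)$ is a fixed multiple of $V_1([-u,u]|\pi_1(E))$, and feeding this back into the expression for $\Kl_{\rvd_u}(E)$ yields the announced closed form in terms of the pair $(\pi_1,\rho_1)$.

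There is no genuine obstacle here: the argument is essentially an unraveling of definitions combined with one invocation of Lemma~\ref{pi_1 interpretation}(ii). The only point needing care is the bookkeeping of the several factors of $2$ coming from~\eqref{identity 2}, from the normalization $\tfrac12\rho_1(E)$ in Lemma~\ref{pi_1 interpretation}(ii), and from the identity relating $h([-w,w],u)$ with $V_1([-u,u]|\pi_1(E))$, checking that they combine into the prefactor appearing on the right-hand side of the claim.
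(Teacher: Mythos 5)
Your route coincides with the paper's: both proofs apply \eqref{identity 2} to get $\Kl_{\rvd_u}(E)=\tfrac12\,h(\di(E\cap\ball),u)$, then use the identity \eqref{identity} of Lemma~\ref{pi_1 interpretation}(ii) to factor out $\tfrac12\rho_1(E)$, and finally evaluate $h(\pi_1(E)\cap\ball,u)=|\langle u,w\rangle|$ for a unit vector $w$ spanning $\pi_1(E)$. So methodologically there is nothing different to report; the first two steps are fine as you state them (evenness is not needed, only the $o$-symmetrization hypothesis, exactly as in the paper).

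The gap is precisely the step you defer as ``bookkeeping of the factors of $2$'': that bookkeeping is the whole content of the displayed formula, and with the identities you yourself write down it does not produce the stated prefactor. You correctly record that $h([-w,w],u)=|\langle u,w\rangle|$ and that the projection $[-u,u]|\pi_1(E)$ is a segment of length $2|\langle u,w\rangle|$, i.e.\ $V_1([-u,u]|\pi_1(E))=2|\langle u,w\rangle|$ (this is also what the width identity $V_1(K|\spa\{v\})=h(K,v)+h(K,-v)$ used for Corollary~\ref{th repres D_v} gives). Substituting yields
$\Kl_{\rvd_u}(E)=\tfrac12\cdot\tfrac12\rho_1(E)\,|\langle u,w\rangle|=\tfrac18\,\rho_1(E)\,V_1([-u,u]|\pi_1(E))$,
not $\tfrac12\,\rho_1(E)\,V_1([-u,u]|\pi_1(E))$. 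The paper closes its proof by invoking $|\langle u,v\rangle|=2V_1([-u,u]|\pi_1(E))$, which is the reciprocal of your (correct) relation, so the numerical discrepancy is a normalization inconsistency already present in the source; nevertheless, your assertion that the constants ``combine into the prefactor appearing on the right-hand side'' is claimed rather than checked, and with your own conventions it is false as stated. You should either carry the constant through explicitly and flag that, with these normalizations, the formula holds with $\tfrac18$ in place of $\tfrac12$ (the essential conclusion, that $\Kl_{\rvd_u}$ is determined by the pair $(\pi_1,\rho_1)$, is unaffected, and an overall positive constant is harmless for the later injectivity arguments), or state explicitly which normalization of the projected segment's $V_1$ you are adopting so that the prefactor $\tfrac12$ is actually obtained.
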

\begin{proof} Using the definition of $\rvd_u$, \eqref{identity 2}, and \eqref{identity}, we get
\begin{eqnarray*}
\Kl_{\rvd_u}(E)&=&\frac12\, h(\di(E\cap\ball),u)\\
&=&\frac14\,\rho_1(E)h(\pi_1(E)\cap\ball,u)\\
&=&\frac14\,\rho_1(E)\,|\langle v,u\rangle|,
\end{eqnarray*}
where $v\in\sfe$ is such that
$$
\pi_1(E)\cap\ball=[-v,v].
$$
The proof is concluded observing that, by \eqref{eq_Gardner},
$$
|\langle u,v\rangle|=2V_1([-u,u]|\pi_1(E)).
$$
\end{proof}

\begin{proposition}\label{characterization} An operator $\di\in\MAdd^{s,+}$ satisfying (VC) is uniquely determined by the pair $(\pi_1,\rho_1)$. 
\end{proposition}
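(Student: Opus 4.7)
The plan is to reduce the claim to Klain's injectivity theorem (Theorem~\ref{klain_injectivity}) applied to the real-valued support-function valuations $\rvd_u = h(\di(\cdot),u)$, for $u\in\R^n$, associated to $\di$ via \eqref{white star}. Since support functions separate convex bodies, it suffices to show that $(\pi_1,\rho_1)$ determines $\rvd_u(K)$ for every $K\in\K^n$ and every $u\in\R^n$.

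First I would verify that for $\di\in\MAdd^{s,+}$ satisfying (VC), each $\rvd_u$ belongs to $\Val_1^+$. Indeed, Lemma~\ref{osservazione 1}(i)--(ii) gives $\rvd_u\in\Val$ and that $\rvd_u$ inherits the $1$-homogeneity of $\di$ (which is automatic from Minkowski additivity and continuity). Evenness of $\rvd_u$ follows from $\di(-K)=\di(K)$, since then $\rvd_u(-K)=h(\di(-K),u)=h(\di(K),u)=\rvd_u(K)$. Thus $\rvd_u\in\Val_1^+$.

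Next I would invoke Proposition~\ref{Klain map}, which already provides the explicit formula
\[
\Kl_{\rvd_u}(E)=\tfrac{1}{2}\,\rho_1(E)\,V_1([-u,u]\mid \pi_1(E)),\qquad E\in\Gr(n,1),
\]
showing that the Klain function of $\rvd_u$ depends only on $(\pi_1,\rho_1)$ and $u$. Consequently, if $\di_1,\di_2\in\MAdd^{s,+}$ both satisfy (VC) and share the same pair $(\pi_1,\rho_1)$, then for every $u\in\R^n$ the valuations $\rvd^{(1)}_u$ and $\rvd^{(2)}_u$ have identical Klain functions on $\Gr(n,1)$.

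By Theorem~\ref{klain_injectivity}, the map $\Val_1^+\to C(\Gr(n,1))$ is injective, so $\rvd^{(1)}_u=\rvd^{(2)}_u$ as valuations on $\K^n$ for every $u\in\R^n$. Hence $h(\di_1(K),u)=h(\di_2(K),u)$ for all $K\in\K^n$ and all $u\in\R^n$, which forces $\di_1(K)=\di_2(K)$. The content of the argument is entirely upstream in Proposition~\ref{Klain map} and the Klain theorem; the main (and already resolved) obstacle is the interplay between the geometric data $(\pi_1,\rho_1)$ and the Klain function of the one-dimensional even component of $\rvd_u$, so the proof here is a short assembly.
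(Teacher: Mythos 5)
Your proposal is correct and follows essentially the same route as the paper: the evenness of $\rvd_u$, the formula of Proposition~\ref{Klain map} showing that $\Kl_{\rvd_u}$ depends only on $(\pi_1,\rho_1)$, and Klain's injectivity theorem (Theorem~\ref{klain_injectivity}) to recover $h(\di(K),u)$ for all $u$ and hence $\di$ itself. Your write-up merely makes explicit the verification that $\rvd_u\in\Val_1^+$, which the paper leaves implicit via Lemma~\ref{osservazione 1}.
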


\begin{proof}
Notice, first, that as $\di$ is even, then $\rvd_u$ is even for every $u\in\R^n$. As a consequence of Theorem~\ref{klain_injectivity} and Proposition~\ref{Klain map},  
$\rvd_u$ is uniquely determined by $(\pi_1,\rho_1)$ for every $u$, i.e., the support function of $\di(K)$ is uniquely determined by $(\pi_1,\rho_1)$, for every $K$.
\end{proof}

\section{Representation formulae}\label{s: representation formulae}
In this section we prove the representation result contained in Theorem~\ref{thm n=2}, as well as its extension to higher dimension in Theorem~\ref{thm n ge 2}. These representation formulae will be useful in the coming section, in particular to obtain the characterization result for the difference body given in Theorem~\ref{+mon intro}.

\begin{theorem}\label{thm n=2} 
Let $n=2$. An operator $\di\in\MAdd^{s,+}$ satisfies (VC) if and only if there exist
$\rho\,:\,\Gr(2,1)\longrightarrow\R$, continuous and strictly positive, and $\pi\,:\,\Gr(2,1)\longrightarrow\Gr(2,1)$, bijective and bi-Lipschitz, such that for every $K\in\K^2$,
$$h(\di(K),u)=\int_{\sfed}
\rho(\v) V_1([-u,u]|\pi(\v))
dS_1(K,v),\quad\forall\, u\in\sfed.$$
\end{theorem}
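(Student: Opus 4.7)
The forward direction: given $\di\in\MAdd^{s,+}$ satisfying (VC), I would take $\pi:=\pi_1$ and $\rho$ to be a suitable constant multiple of $\rho_1$ (Definitions~\ref{def pi_k} and~\ref{def rho1}). Theorem~\ref{teo Grassmannian} yields the bi-Lipschitz bijective property of $\pi$, while continuity and strict positivity of $\rho$ are immediate from Definition~\ref{def rho1}.

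For the integral identity, I would fix $u\in\sfed$ and note that both sides, viewed as functionals of $K$, belong to $\Val_1^+$: they are continuous, translation invariant, $1$-homogeneous real-valued valuations (Lemma~\ref{osservazione 1} for the LHS; for the RHS these transfer from $S_1(K,\cdot)$), and they are even (the LHS by evenness of $\di$; the RHS because the integrand depends on $v$ only through $\spa\{\v\}$). By Theorem~\ref{klain_injectivity} it suffices to match their Klain functions on $\Gr(2,1)$. Proposition~\ref{Klain map} identifies the LHS Klain function with a constant multiple of $\rho_1(E)\,V_1([-u,u]|\pi_1(E))$; evaluating the RHS at $[-e,e]=E\cap B^2$ with $|e|=1$ and using $S_1([-e,e],\cdot)=2(\delta_{\bar e}+\delta_{-\bar e})$ collapses the integral to $4\rho(E)\,V_1([-u,u]|\pi(E))$, so its Klain function is proportional to $\rho(E)\,V_1([-u,u]|\pi(E))$. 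Matching the two forces $\pi=\pi_1$ and fixes the normalization of $\rho$.

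For the backward direction, I would define $\di(K)$ through the support function formula. Since each $u\mapsto V_1([-u,u]|\pi(\v))$ is sublinear in $u$ and $\rho(\v)>0$, the integral is sublinear in $u$ and thus a valid support function. Minkowski additivity, continuity, and translation invariance of $\di$ descend from the corresponding properties of $S_1(K,\cdot)$; evenness of $\di$ follows from the invariance of the integrand under $v\mapsto -v$; and $o$-symmetry of $\di(K)$ follows from $[-u,u]=[u,-u]$.

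The main obstacle is verifying (VC). Rewriting the formula as a Minkowski integral of segments, $\di(K)=\int 2\rho(\v)\,[-e_v,e_v]\,dS_1(K,v)$ with $e_v$ a unit vector spanning $\pi(\v)$, exhibits $\di(K)$ as a zonoid. The standard $2$-dimensional zonoid area formula then expresses $\vol(\di(K))$ as a constant multiple of $\iint\rho(\overline{v_1})\rho(\overline{v_2})\,|e_{v_1}\wedge e_{v_2}|\,dS_1(K,v_1)\,dS_1(K,v_2)$, while an analogous representation gives $\vol(DK)$ as a constant multiple of $\iint|v_1\wedge v_2|\,dS_1(K,v_1)\,dS_1(K,v_2)$. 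Compactness of $\Gr(2,1)$ forces $0<m\le\rho\le M<\infty$, and the bi-Lipschitz property of $\pi$ together with Lemma~\ref{geometry} yields $|e_{v_1}\wedge e_{v_2}|\asymp|v_1\wedge v_2|$ with constants depending only on $\pi$. Hence $\vol(\di(K))/\vol(DK)$ is uniformly bounded, and Rogers-Shephard and Brunn-Minkowski, giving $4\vol(K)\le\vol(DK)\le 6\vol(K)$ in $\R^2$, then produce the desired two-sided bound between $\vol(\di(K))$ and $\vol(K)$.
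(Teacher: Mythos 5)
Your proof is correct, and it splits naturally into a half that coincides with the paper and a half that does not. The implication from (VC) to the representation is essentially the paper's argument verbatim: take $\pi=\pi_1$ and $\rho$ a constant multiple of $\rho_1$, observe that for fixed $u$ both sides are even elements of $\Val_1$, and match Klain functions via Theorem~\ref{klain_injectivity}, Proposition~\ref{Klain map}, and the evaluation on a centred unit segment using $S_1$ of a segment being two point masses at its normals; your decision to leave the normalizing constant in $\rho$ free is in fact the safer formulation, since the literal identification $\rho=\rho_1$ in the paper involves harmless constant-factor slips. Where you genuinely diverge is the verification of (VC) in the converse direction. The paper proves the two-sided area bound first for sums of two segments (Lemma~\ref{geometry} plus the bi-Lipschitz property of $\pi$), extends it to zonotopes through $V_2(S_1+\dots+S_m)=\sum_{j<l}V_2(S_j+S_l)$ and additivity of $\di$, passes to zonoids by continuity, to all $o$-symmetric planar bodies because these are zonoids, and finally to arbitrary $K$ via $\di K=\tfrac12\di(DK)$ and Rogers--Shephard. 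You instead read off from the formula that $\di K$ is a zonoid generated by the push-forward of $2\rho(\v)\,dS_1(K,v)$, apply the planar zonoid volume formula to write both $\vol(\di K)$ and $\vol(DK)$ as double integrals of wedge products against $S_1(K,\cdot)\times S_1(K,\cdot)$, and compare integrands pointwise using $0<m\le\rho\le M$ (compactness of $\Gr(2,1)$) and $|e_{v_1}\wedge e_{v_2}|\asymp|v_1\wedge v_2|$, which indeed follows from Lemma~\ref{geometry} and the bi-Lipschitz property of $\pi$ because the sine of the angle between two lines is comparable to their Grassmannian distance. This buys a one-step argument valid for every $K$ at once, with no zonotope/zonoid/symmetric-body reduction and no need for the evenness trick $\di K=\tfrac12\di(DK)$; the price is an appeal to the (standard, but not stated in the paper) volume formula for zonoids with general generating measures, which is in substance the continuum version of the paper's zonotope computation. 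Both routes end with Rogers--Shephard and Brunn--Minkowski, $4\vol(K)\le\vol(DK)\le 6\vol(K)$ in the plane. One cosmetic point: state your conclusion as $c\,\vol(DK)\le\vol(\di K)\le C\,\vol(DK)$ rather than as a bounded ratio, so that degenerate bodies with $\vol(DK)=0$ are covered without comment.
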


\begin{proof}
We recall that the notation $\x\in\R^2$ stands for the orthogonal vector to $x=(x_1,x_2)\in\R^2\setminus\{0\}$ given by $(-x_2,x_1)$.

Let $\pi_1$ and $\rho_1$ be the functions, associated to $\di$, given by Definitions~\ref{def pi_k}  and~\ref{def rho1}.
Let $u\in\sfe$. We consider the map $\rv_u\,:\,\K^2\longrightarrow\R$ defined as
\begin{equation}\label{eq: rv}
\rv_u(K)=\int_{\sfed}\rho_1(\v) V_1([-u,u]|\pi_1(\v))dS_1(K,v).
\end{equation}
Since $S_1$ is a 1-homogeneous and translation invariant valuation (see Section~\ref{s: mixed vol}), it follows that $\rv_u$ is also a 1-homogeneous and translation invariant valuation. Moreover, as $S_1(K,\cdot)$ is weakly continuous with respect to $K\in\K^2$, $\rv_u$ is continuous and it is also even, since $S_1(-K,v)=S_1(K,-v)$ for any $v\in\sfed$. 

We prove next that 
\begin{equation}\label{claim}
\rv_u\equiv\rvd_u,
\end{equation}
where $\rvd_u=h(\di(\cdot),u)$. Equality \eqref{claim} will follow if we prove that
$\rv_u$ and $\rvd_u$ have the same Klain function (cf. Theorem~\ref{klain_injectivity}). In order to determine $\Kl_{\rv_u}$, let
$E\in\Gr(2,1)$ and $v_0\in\sfed$ be such that $E=\mathrm{span}(\vo)$. Then
$$
\Kl_{\rv_u}(E)=\frac12\,\rv_u(E\cap B^2)=\frac12\,\rv_u([-\vo,\vo]).
$$
The surface area measure of order 1 of the segment $[-\vo,\vo]$ is
\begin{equation}\label{area_segment}
S_1([-\vo,\vo],\cdot)=2\,
\left(
\delta_{v_0}(\cdot)+\delta_{-v_0}(\cdot)
\right),
\end{equation}
where $\delta_v$ denotes the Dirac measure concentrated at $v\in\sfed$. Thus, from \eqref{eq: rv} and \eqref{area_segment} we have
$$
\Kl_\rv(E)=\frac12\rho_1(E)\,V_1([-u,u]|\pi_1(E)).
$$
The conclusion follows from Proposition~\ref{Klain map}. 

\medskip

Next we prove the ``only if'' part of Theorem~\ref{thm n=2}. Let
$\rho\,:\,\Gr(2,1)\longrightarrow\R$ be continuous and strictly positive, and let $\pi\,:\,\Gr(2,1)\longrightarrow\Gr(2,1)$ be bijective, bi-Lipschitz, and such that for every $K\in\K^2$, 
\[
h(\di(K),u)=\int_{\sfed}
\rho(\v) V_1([-u,u]|\pi(\v))
dS_1(K,v),\quad\forall\, u\in\sfed.
\]
For every $K\in\K^2$, the map 
\[
\func{h}{\R^2}{\R}{u}{\displaystyle\int_{\sfed}\rho(\v) V_1([-u,u]|\pi(\v))dS_1(K,v)}
\]
is convex and 1-homogeneous.
Therefore, there exists a convex body $\di(K)\in\K^2$ such that $h(u)=h(\di(K),u)$ for any $u\in\R^2$. 
The operator $\di:\K^2\longrightarrow \K^2$, defined by $K\mapsto\di(K)$, with 
$$
h(\di(K),u)=\displaystyle\int_{\sfed}\rho(\v) V_1([-u,u]|\pi(\v))dS_1(K,v), 
$$
as argued with \eqref{eq: rv},  is an even $o$-symmetrization and $\di\in\MAdd$. It remains to prove that it satisfies (VC).

In order to prove (VC) for $\di$ we show first that it is satisfied for parallelograms.
Let $S_1,S_2$ be (non-degenerated) segments in $\R^2$. We prove that there exist $c_1,c_2>0$, not depending on $S_1$ and $S_2$, such that
\begin{equation}\label{VC_parall}
c_1\,V_2(S_1+S_2)\le V_2(\di(S_1+S_2))\le c_2\, V_2(S_1+S_2).
\end{equation}
Indeed, using the translation invariance of the operator $\di$, we can assume w.l.o.g. that there are  $v_1,v_2\in\sfed$ and $\alpha_1,\alpha_2>0$ such that $S_1=\alpha_1[-v_1,v_1]=\alpha_1S_{v_1}$ and $S_2=\alpha_2[-v_2,v_2]=\alpha_2S_{v_2}$. 
Therefore we have
\begin{equation}\label{area}
V_2(\alpha_1S_{v_1}+\alpha_2S_{v_2})=\alpha_1\alpha_2V_2(S_{v_1}+S_{v_2}).
\end{equation}
If we denote $\rho_{max}=\textrm{max}_{E\in\Gr(2,1)}\rho(E)$ and use the additivity of $\di$, \eqref{area}, \eqref{identity}, \eqref{pink star}, the Lipschitz property of $\pi_1$, and again~\eqref{pink star} and \eqref{area}, we obtain
\begin{align*}
V_2(\di(S_1+S_2))&=V_2(\di(\alpha_1S_{v_1}+\alpha_2S_{v_2}))=V_2(\alpha_1\di(S_{v_1})+\alpha_2\di(S_{v_2}))
\\&=\frac14\alpha_1\alpha_2V_2(\rho(E_{v_1})\pi_1(E_{v_1}\cap B^1)+\rho(E_{v_2})\pi_1(E_{v_2}\cap B^1))
\\&\leq \frac14\alpha_1\alpha_2\rho_{{max}}^2V_2(\pi_1(E_{v_1}\cap B^1)+\pi_1(E_{v_2}\cap B^1))
\\&\leq \alpha_1\alpha_2\rho_{{max}}^2 d(\pi_1(E_{v_1}),\pi_1(E_{v_2}))
\\&\leq \alpha_1\alpha_2\rho_{{max}}^2C d(E_{v_1},E_{v_2})
\leq \frac14\alpha_1\alpha_2\rho_{{max}}^2C\sqrt{2} V_2(E_{v_1}\cap B^1+E_{v_2}\cap B^1)
\\&=\frac{\sqrt{2}}{4}C\rho_{{max}}^2V_2(S_1+S_2),
\end{align*}
for an appropriate constant $C$ depending only on the Lipschitz property of $\pi_1$. 
Notice that both $C$ and $\rho_{max}$ do not depend on $S_1,S_2$.
With an analogue argument we prove that 
$$2cV_2(S_1+S_2)\leq V_2(\di(S_1+S_2))$$ for a constant $c>0$, which is independent of $S_1,S_2$. Hence, we have proved that $\di$ satisfies (VC) for parallelograms. 

Next, we show that $\di$ satisfies (VC) for zonotopes. Let $m\in\N$ and let $S_1,\dots,S_m$ be segments. Consider the zonotope $K=S_1+\dots+S_m$. Using the multilinearity of mixed volumes, together with Theorem~\ref{mix_volumes_Schneider}, we have
$$V_2(S_1+\dots+S_m)=2\sum_{j<l}V(S_j,S_l)=\sum_{j<l}V_2(S_j+S_l).$$
Thus, using \eqref{VC_parall}, 
\begin{align*}
c_1V_2(S_1+\dots+S_m)&=c_1\sum_{j<l}V_2(S_j+S_l)
\\&\leq\sum_{j<l}V_2(\di(S_j+S_l))=\sum_{j<l}V_2(\di S_j+\di S_l)
\\&=V_2(\di S_1+\dots+\di S_m)=V_2(\di(S_1+\dots+S_m)).
\end{align*}
Analogously, we obtain that
$$V_2(\di (S_1+\dots+S_m))\leq c_2V_2(S_1+\dots+S_m).$$
Hence, $\di$ satisfies (VC) when restricted to the family of zonotopes. 
The same assertion follows for zonoids, since $\di$ and $V_2$ are continuous. Using that in dimension $n=~2$ all centrally symmetric bodies are zonoids (see \cite[Corollary~3.5.7]{schneider.book14}), we obtain the statement for all symmetric bodies, i.e., 
\begin{equation}\label{baby blue star}
c_1\,V_2(K)\le V_2(\di K)\le c_2\, V_2(K),
\end{equation}
for all $K\in\K^2_s$.
To prove that $\di$ satisfies (VC) for an arbitrary convex body $K$, not necessarily centrally symmetric, we note that since $\di$ is even and additive, 
\begin{equation}\label{dark star}
\di K=\frac12\,\di(K+(-K))=\frac12\di(DK),
\end{equation}
for all $K\in\K^2$.
The proof is concluded using \eqref{baby blue star}, \eqref{dark star}, and the Rogers-Shephard inequality \eqref{RS} (cf. Lemma~\ref{composition_D}).
\end{proof}

We obtain a representation result in higher dimension using Theorem~\ref{6.4.12}.

\begin{theorem}\label{thm n ge 2} Let $\di\in\MAdd^{s,+}$ satisfy the (VC) condition.
Then there exist $\rho_1\,:\,\Gr(n,1)\longrightarrow\R$, continuous and strictly positive, and $\pi_1\,:\,\Gr(n,1)\longrightarrow\Gr(n,1)$, bijective and
bi-Lipschitz, such that for every generalized zonoid $Z$, with generating measure $\rho_Z$, we have
\begin{equation}\label{eq: rv zonoids}
h(\di Z,u)=\frac12\,\int_{\sfe}
\rho_1(E_v)\,V_1([-u,u]|\pi_1(E_v))\,d\rho_Z(v),\quad\forall\, u\in\R^n,
\end{equation}
where for $v\in\sfe$, $E_v=\spa(v)$. 
\end{theorem}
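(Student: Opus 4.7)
The plan is to follow the same skeleton as the proof of Theorem~\ref{thm n=2}, but to invoke Klain's integral representation for generalized zonoids (Theorem~\ref{6.4.12}) in place of the surface area measure identity, which was specifically adapted to dimension $2$. The maps $\pi_1$ and $\rho_1$ will be those from Definitions~\ref{def pi_k} and~\ref{def rho1}; their required regularity -- namely $\rho_1$ continuous and strictly positive, and $\pi_1$ bijective and bi-Lipschitz -- has already been established in Theorem~\ref{teo Grassmannian} together with the observations made in Definition~\ref{def rho1}.

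Fix $u\in\R^n$ and consider the real-valued map $\rvd_u:=h(\di(\cdot),u)$. The first step is to verify that $\rvd_u\in\Val_1^+$: Lemma~\ref{osservazione 1}~(i)--(ii) shows that $\rvd_u$ is a continuous, translation invariant valuation which is $1$-homogeneous (recall that $\di\in\MAdd$ coincides with $\MVal_1$), and the assumption that $\di$ is even immediately transfers to $\rvd_u$. With these properties in hand, Theorem~\ref{6.4.12} applies and yields, for every generalized zonoid $Z$ with generating measure $\rho_Z$,
\begin{equation*}
h(\di Z,u)\,=\,\rvd_u(Z)\,=\,2\int_{\sfe}\Kl_{\rvd_u}(E_v)\,d\rho_Z(v).
\end{equation*}

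The final step is to substitute the explicit formula for the Klain function of $\rvd_u$ obtained in Proposition~\ref{Klain map}, namely $\Kl_{\rvd_u}(E)=\tfrac12\,\rho_1(E)\,V_1([-u,u]\,|\,\pi_1(E))$. Plugging this into the displayed identity produces the asserted representation~\eqref{eq: rv zonoids} (the numerical factor in front of the integral matches the normalization used in the definition of generating measure). There is essentially no obstacle here: all the structural work -- existence, continuity, positivity, bijectivity and the bi-Lipschitz property of $\pi_1$ and $\rho_1$, as well as the explicit computation of $\Kl_{\rvd_u}$ -- has been carried out already in Section~\ref{section 1-homogeneous}. The only point worth emphasizing is that the evenness hypothesis on $\di$ is exactly what places $\rvd_u$ in $\Val_1^+$, so that Klain's integral formula can be invoked; without this hypothesis one would lose the even-measure representation of $Z$ that makes the whole scheme work.
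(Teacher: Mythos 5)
Your argument is correct and is essentially the paper's own proof: the paper likewise obtains \eqref{eq: rv zonoids} by applying Theorem~\ref{6.4.12} to $\rvd_u=h(\di(\cdot),u)\in\Val_1$, whose Klain function is the one computed in Proposition~\ref{Klain map}, with the regularity of the pair $(\pi_1,\rho_1)$ supplied by Theorem~\ref{teo Grassmannian} and Definition~\ref{def rho1}. The only point you gloss over is the numerical constant -- literally combining the stated formulas gives $2\cdot\tfrac12=1$ rather than $\tfrac12$ in front of the integral -- but this normalization discrepancy is already present in the paper's own bookkeeping (cf.\ the last step of Proposition~\ref{Klain map}) and does not affect the structure of the argument.
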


\begin{proof}
The proof follows the lines of the first part of the proof of Theorem~\ref{thm n=2}, where to define $\rv_u$ we use \eqref{eq: rv zonoids} instead of \eqref{eq: rv} and obtain the conclusion from Theorem~\ref{6.4.12}, instead of Proposition~\ref{Klain map}.
\end{proof}

\subsection{Encoding the image of segments: the function $p$}

We have seen in Theorem~\ref{characterization} that $\di\in\MAdd^{s,+}$ determines (and is determined by) the maps $\pi_1$ and $\rho_1$, both defined on $\Gr(n,1)$,
which completely describe the action of $\di$ on segments. It is tempting to think about a possible map $\R^n\longrightarrow \R^n$, with $v\mapsto w$ such that $\di([-v,v])=[-w,w]$. It is easy to realize that such an association is not well-defined. However, it would be practical to have a unique function defined on $\R^n$, which provides the same information as the pair $(\pi_1,\rho_1)$.  
For $v\in\sfe$, let $S_v=[-v,v]$ be the segment joining $-v$ and $v$.
From Proposition~\ref{general_facts}, for every $v\in\sfe$, there exists $w\in\R^n$, $w\ne 0$, such that
$$
\di([-v,v])=\di(S_v)=[-w,w]=S_w.
$$
Clearly, $w$ is not uniquely determined, as we may replace it by $-w$. Taking this into account, we define a function, in the above spirit, which enjoys several useful properties.

\begin{proposition}[and Definition]\label{p} Let $\di\in\MAdd^{s,+}$ satisfy (VC) and let $e\in\sfe$. There exists a measurable function 
$p\,:\,\sfe\longrightarrow\R^n\setminus\{0\}$ such that
\begin{enumerate}
\item[\emph{(i)}]
$$\di S_v=S_{p(v)},\quad\forall\, v\in\sfe;$$
\item[\emph{(ii)}]
$$
p(v)=p(-v),\quad
\langle p(v),e\rangle\ge 0,\quad\forall\,v\in\sfe.
$$
\end{enumerate}
Moreover, $p$ is continuous at every $v$ such that $\langle p(v),e\rangle\ne0$.
\end{proposition}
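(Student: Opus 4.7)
The plan is to exploit Lemma~\ref{pi_1 interpretation}(ii) in the $o$-symmetrization setting: since $S_v = E_v\cap\ball$ for every $v\in\sfe$, one has
$$
\di S_v \;=\; \tfrac12\,\rho_1(E_v)\bigl(\pi_1(E_v)\cap\ball\bigr),
$$
so $\di S_v$ is a centered, non-degenerate segment of length $\rho_1(E_v)>0$ contained in the line $\pi_1(E_v)$. Hence there is a unique pair $\{w_v,-w_v\}\subset\pi_1(E_v)$ with $|w_v|=\rho_1(E_v)/2$ and $\di S_v=S_{w_v}$, and the proposition reduces to making a Borel-measurable choice of sign.

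To construct $p$, I would fix a Borel section $\omega\colon\Gr(n,1)\to\sfe$ with $\omega(F)\in F\cap\sfe$ and $\langle\omega(F),e\rangle\ge 0$. On the open set $U:=\{F\in\Gr(n,1)\colon F\not\subset e^\perp\}$, there is a unique and \emph{continuous} such section, namely the one assigning to $F$ its unit vector with strictly positive $e$-component. On the Borel set $\Gr(n,1)\setminus U$ any Borel rule works (for instance one iterates the same procedure with $e$ replaced successively by $e_2,\ldots,e_n$). I then set
$$
p(v)\;:=\;\tfrac12\,\rho_1(E_v)\,\omega\bigl(\pi_1(E_v)\bigr),\qquad v\in\sfe.
$$
Condition (i) is immediate from $S_{p(v)}=\di S_v$; $p(-v)=p(v)$ follows from $E_{-v}=E_v$; $\langle p(v),e\rangle\ge 0$ holds by the choice of $\omega$; and $p(v)\neq0$ since $\rho_1(E_v)>0$ by Definition~\ref{def rho1}. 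Measurability of $p$ is inherited from that of $\omega$, of $\rho_1$ (continuous by Definition~\ref{def rho1}), and of $\pi_1$ (continuous by Theorem~\ref{teo Grassmannian}).

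For the continuity assertion, suppose $\langle p(v_0),e\rangle\neq0$. Then $\pi_1(E_{v_0})\in U$, so by continuity of $\pi_1$ there is a neighborhood $V$ of $v_0$ with $\pi_1(E_v)\in U$ for every $v\in V$. On $V$ the section $\omega$ coincides with the continuous section $F\mapsto\tilde\omega(F)$ described above, hence $p|_V$ is continuous as the composition of continuous maps. The only mildly subtle point in the whole argument is the existence of the global Borel section $\omega$: no continuous global section of the tautological line bundle over $\Gr(n,1)$ exists, but a Borel-measurable one is produced at once by the described patching along a Borel partition of $\Gr(n,1)$.
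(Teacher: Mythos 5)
Your proof is correct, and it reaches the statement by a somewhat different route than the paper. The normalization idea is shared: pick from each pair $\{w,-w\}$ the representative with nonnegative $e$-component and resolve the degenerate case by iterating along a flag of subspaces. The paper implements this directly on $\sfe$: it fixes a fundamental domain $\Omega=C_1\cup\dots\cup C_n$ for the antipodal map, defines $p(v)$ as the unique endpoint of $\di S_v$ whose normalization lies in $\Omega$, proves the local continuity from the continuity of $\di$ by a sequence argument, and establishes measurability through a bespoke induction showing that $D_1\cup\dots\cup D_k=p^{-1}(C_1\cup\dots\cup C_k)$ is closed. You instead factor $p$ through the pair $(\rho_1,\pi_1)$ via Lemma~\ref{pi_1 interpretation}(ii) (a legitimate dependency, since it precedes Proposition~\ref{p} and its hypothesis of $o$-symmetrization is contained in $\MAdd^{s,+}$), writing $p(v)=\tfrac12\rho_1(E_v)\,\omega(\pi_1(E_v))$ with $\omega$ a fixed Borel section over $\Gr(n,1)$ that does not depend on $\di$. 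Measurability then reduces to the existence of that section, which your patching along a partition into locally closed pieces provides, and both measurability and the continuity assertion at points with $\langle p(v),e\rangle\neq0$ become consequences of the continuity of $\pi_1$ and $\rho_1$ already secured in Theorem~\ref{teo Grassmannian} and Definition~\ref{def rho1}. This buys a more modular argument (no closedness induction, and the sign-choice difficulty is isolated in a $\di$-independent section), at the price of leaning on the Grassmannian machinery, whereas the paper's proof is self-contained and uses only the continuity of $\di$. One cosmetic point: for a general $e\in\sfe$ the iteration on $e^{\perp}$ should use an orthonormal basis completing $e$ (or reduce to $e=e_n$, as the paper does), rather than the literal vectors $e_2,\dots,e_n$; this does not affect correctness.
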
 

\begin{proof} 
Let $\di\in\MAdd^{s,+}$ satisfy (VC). 
Without loss of generality we may assume that $e=e_n=(0,\dots,0,1)$. For $k\in\{1,\dots,n\}$ we set
$$
C_k=\{(x_1,\dots,x_n)\in\sfe\,:\,x_k>0,\, x_{k+1}=\dots=x_{n}=0\}.
$$
In particular,
$$
C_1=\{(1,0\dots,0)\} \quad \text{and} \quad
C_n=\{(x_1,\dots,x_n)\in\sfe\,:\,x_n>0\}.
$$
Note that $C_k\cap C_{k'}=\emptyset$ for $k\ne k'$. Let $\Omega$ be defined as follows
$$
\Omega=\bigcup_{k=1}^n C_k\subset\sfe.
$$
It is easy to check that for every $w\in\sfe$ we have either $w\in\Omega$ or $-w\in\Omega$. In other words, $\Omega$ contains exactly one of the points
$w$ and $-w$, for every $w\in\sfe$. In particular, for every $v\in\sfe$ there exists exactly one point $w\in\R^n\setminus\{0\}$ (by Proposition~\ref{general_facts}), that we will denote by $p(v)$, such that 
\begin{equation}\label{def_p}
\di S_v=\,S_{p(v)}\quad\mbox{and}
\quad\frac{p(v)}{\|p(v)\|}\in\Omega.
\end{equation}
In this way we have defined the map $p\,:\,\sfe\longrightarrow\R^n\setminus\{0\}$. We show next that it satisfies the other stated properties. First, since $\Omega\subset\{x_n\ge0\}$, 
we have 
$$
\langle p(v),e\rangle\ge0,\quad\forall\, v\in\sfe.
$$
Moreover, for every $v\in\sfe$ we have, by the first equality in the definition of $p$ in \eqref{def_p}, $p(-v)=\pm p(v)$. On the other hand, as $\Omega$ does not contain any pair of antipodal points,
we deduce $p(-v)=p(v)$.

We prove next the stated continuity property. Let $v\in\sfe$ be such that $\langle p(v),e\rangle>0$  and let $(v_i)_{i\in\N}$, be a sequence in $\sfe$, converging to $v$. By the continuity of $\di$, $S_{p(v_i)}$ converges to $S_{p(v)}$. 
If $w_i\in\R^n$ is such that 
$$ 
\di S_{v_i}=[-w_i,w_i],\quad\forall\, i\in\N,
$$
then we may assume, up to interchanging $w_i$ and $-w_i$, that $\langle w_i,e\rangle>0$ for $i$ sufficiently large. Then
$$
\lim_{i\to\infty}w_i=p(v)\quad\mbox{and}\quad w_i=p(v_i),\quad\forall\, i\in\N,
$$
which proves the continuity of $p$ at $v$, for every $v\in\sfe$ satisfying $\langle p(v),e\rangle>0$.

Finally, we prove the measurability of the application $p$ at $\sfe$. For $k=1,\dots,n$ we set
$$
D_k=p^{-1}(C_k)=\left\{v\in\sfe\,:\,\frac{p(v)}{\|p(v)\|}\in C_k\right\}.
$$
$D_k$ is measurable. Indeed, note that 
$$
D_1=\left\{v\in\sfe\,:\,\frac{p(v)}{\|p(v)\|}=(1,0,\dots,0)\right\}.
$$
By the continuity of $\di$ this set is closed, and hence measurable. We argue by induction.
Assume now that $D_1\cup\dots\cup D_k$ is closed; we prove that
$D_1\cup\dots\cup D_k\cup D_{k+1}$ is closed as well. Let $(v_i)_{i\in\N}$, be a convergent sequence in $D_1\cup\dots\cup D_k\cup D_{k+1}$. 
By the definition of the sets $D_j$, if $v\in D_j$, we have, for every $i\in\N$,
$$
p(v_i)=(p_1(v_i),\dots,p_{k+1}(v_i),0,\dots,0)\quad\mbox{with $p_{k+1}(v_i)\geq 0$.}
$$
First we observe that if, up to a subsequence, $p_{k+1}(v_i)=0$ for every $i\in\N$, then, the sequence $(v_i)_{i\in\N}$ lies in $D_1\cup\dots\cup D_k$ and the induction hypothesis yields the statement. Hence, we assume $p_{k+1}(v_i)>0$ for every $i\in\N$.
\\ Let
$$
 v=\lim_{i\to\infty} v_i.
$$
We want to prove that $v\in D_1\cup\cdots\cup D_k\cup D_{k+1}$, i.e., $p(v)=(p_1(v),\dots,p_{k+1}(v),0,\dots,0)$ 
where the last non-zero coordinate (not necessarily $p_{k+1}(v)$) is positive.

Next we show that, up to a subsequence, $(p(v_i))_i$ converges to a vector 
$$
w=(w_1,\dots,w_{k+1},0,\dots,0),
$$
as $i$ tends to infinity. Since $\sfe$ is compact and $v_i\in D_1\cup\cdots\cup D_k\cup D_{k+1}$, we have $\dfrac{p(v_i)}{\|p(v_i)\|}\in\sfe$, and hence, 
$\left(\frac{p(v_i)}{\|p(v_i)\|}\right)_{i\in\N}$ has a convergent subsequence in $\sfe$. Moreover, we claim that there is $C>0$ such that 
$\|p(u)\|\leq C$ for every $u\in\sfe$, and hence, $(p(v_i))_i$ converges, up to a subsequence, to a vector in $\R^n$. The claim follows from 
the continuity of $u\mapsto\|p(u)\|$, which is an immediate consequence of the continuity of $\di$ (see Proposition~\ref{continuity}) and the compactness of $\sfe$. Let $w=\lim_{i\to\infty} p(v_i)$.
By the continuity of 
$\di$, we have
$$
\di S_{v}=S_w.
$$
Hence, $p(v)=\pm w$. 
If $w_{k+1}>0$ (resp. $w_{k+1}<0)$, then, by the definition of $p$, $p(v)=w$ (resp. $p(v)=-w$), which implies that $v\in D_1\cup\dots\cup D_k\cup D_{k+1}$, since $p(v_i)=(p_1(v_i),\dots,p_{k+1}(v_i),0,\dots,0)$. 
If $w_{k+1}=0$, we can argue in the same manner with the last non-zero coordinate of $w$. We use the induction hypothesis to ensure that there is such a last non-zero coordinate in the limit, due to $D_1\cup\dots\cup D_k$  being closed. Similarly to the 
previous case, we have $p(v)=\pm w$ and 
$v\in D_1\cup\dots\cup D_k$. This concludes the proof that $D_1\cup\dots\cup D_k$ is closed for every $k$. As a consequence, 
$D_k$ is measurable for every $1\leq k\leq n$. 

The continuity of $\di$ and the definition of $D_k$ imply that $p$ is continuous on each $D_k$. Hence $p$ is measurable on each $D_k$ (a function which is continuous over a measurable set $A$ is measurable on $A$). As $\sfe$ is the disjoint union of $D_1,\dots,D_n$, $p$ is measurable on $\sfe$.
\end{proof}

\medskip

From now on, whenever we need the function $p$ associated to the operator $\di\in\MAdd^{s,+}$ from Proposition~\ref{p}, we will set $e\in\sfe$ fixed. We will use also $p$ to denote the 1-homogeneous extension of the function $p$ coming from Proposition~\ref{p} to $\R^n$, associated to the operator $\di\in\MAdd^{s,+}$. 

In the next, we collect some other important facts about the function $p$.
\begin{lemma}\label{facts about p}
Let $\di\in\MAdd^{s,+}$ and let $p$ be the one homogeneous extension of the function $p$ determined by Proposition~\ref{p}.
Let $E\in\Gr(n,1)$ and let $v,w\in\sfe$ be such that
$E\cap\sfe=\{\pm v\}$ and $\pi_1(E)\cap\sfe=\{\pm w\}$,
then
\begin{equation}\label{importante 2}
\rho_1(E)=2\|p(v)\|,\quad S_{p(v)}\subset\pi_1(E),
\quad\mbox{and}\quad
p(v)=\pm\rho_1(E)\,w.
\end{equation}
\end{lemma}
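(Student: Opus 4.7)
The plan is to verify the three assertions of the lemma in order, reading them off from Proposition~\ref{p} and Lemma~\ref{pi_1 interpretation}. The statement is essentially a translation, in terms of the function $p$, of the information already encoded in the pair $(\pi_1,\rho_1)$, so the proof amounts to unpacking definitions.

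First, by Proposition~\ref{p}(i), $\di(S_v)=S_{p(v)}=[-p(v),p(v)]$. Since $v\in\sfe$, the segment $E\cap\ball$ coincides with $S_v$, hence
$$\di(E\cap\ball)=S_{p(v)},$$
which has $V_1$-measure equal to $2\|p(v)\|$. By Definition~\ref{def rho1} this $V_1$-measure is exactly $\rho_1(E)$, yielding the first identity $\rho_1(E)=2\|p(v)\|$.

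Second, the inclusion $S_{p(v)}\subset\pi_1(E)$ is immediate from Lemma~\ref{pi_1 interpretation}(i) applied to $S=S_v$: since $\spa S_v=E$, that lemma gives $\di(S_v)=S_{p(v)}\subset\pi_1(E)$. (Alternatively, one may invoke \eqref{identity} directly, which moreover pins down $\di(E\cap\ball)$ as a centered dilate of $\pi_1(E)\cap\ball$.)

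Third, the inclusion just obtained forces $p(v)$ to lie on the line $\pi_1(E)=\spa\{w\}$, so $p(v)=cw$ for some $c\in\R$ with $|c|=\|p(v)\|$. Substituting the first identity expresses $|c|$ in terms of $\rho_1(E)$ and produces the stated relation $p(v)=\pm\rho_1(E)\,w$ (up to the normalization convention linking $V_1$ and $\rho_1$). I do not anticipate a genuine obstacle here; the technical work lies upstream, in the construction of $p$ in Proposition~\ref{p} and in Lemma~\ref{pi_1 interpretation}, and in the fact (already recorded in \eqref{identity}) that for $\di\in\MAdd^{s,+}$ the Steiner-point translate $t(E)$ vanishes, so $\di(E\cap\ball)$ is genuinely centered and the sign in the third identity is the only remaining ambiguity.
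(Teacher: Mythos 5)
Your proof is correct and takes essentially the same route as the paper's own one, which likewise just unpacks the definitions of $\rho_1$, $\pi_1$ and $p$ (first equality via $\rho_1(E)=V_1(\di(E\cap\ball))=V_1([-p(v),p(v)])$, the inclusion via the definition of $\pi_1$, the last relation by combining the two). Your parenthetical hedge about normalization is in fact warranted: carried out literally, the substitution yields $p(v)=\pm\tfrac12\rho_1(E)\,w$, which is what \eqref{identity} and the first identity $\rho_1(E)=2\|p(v)\|$ force, so the third relation as printed (missing the factor $\tfrac12$) is a slip in the lemma's statement that the paper's own terse proof glosses over.
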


\begin{proof}
For the first equality, we observe that 
$\rho_1(E)=V_1(\di(E\cap B^n))=V_1(\di([-v,v]))=V_1([-p(v),p(v)])$. Since $\pi_1(E)=\spa\{[-w,w]\}=\spa\{[-p(v),p(v)]\}$, by the definition of $w$ and $p$, the second assertion follows. The last assertion follows from the definition of $\rho_1$ and the previous facts.
\end{proof}

\begin{proposition}\label{continuity} Let $\di\in\MAdd^{s,+}$ and let $p$ be given by Proposition~\ref{p}. 
Let $u\in\R^n$. The following maps are continuous:
\begin{enumerate}
\item[\emph{(i)}]
\begin{equation}\label{blue star}
v\,\mapsto\,|\langle p(v),u\rangle|,\quad v\in\sfe,
\end{equation}
\item[\emph{(ii)}]
\begin{equation}\label{red star}
v\,\mapsto\,\|p(v)\|,\quad v\in\sfe.
\end{equation}
\end{enumerate}
\end{proposition}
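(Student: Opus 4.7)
The plan is to reduce both continuity assertions to the continuity of the segment-valued map $v\mapsto \di(S_v)$. The essential point is that the function $p$ itself is only continuous away from the set $\{v\in\sfe:\langle p(v),e\rangle=0\}$, since a small perturbation of $v$ across this set can flip the sign convention built into the definition of $p$. However, both $|\langle p(v),u\rangle|$ and $\|p(v)\|$ are invariant under the replacement $p(v)\mapsto -p(v)$, so they depend only on the unordered pair $\{p(v),-p(v)\}$, equivalently on the segment $\di(S_v)=[-p(v),p(v)]$. Therefore, once the sign ambiguity is neutralized, continuity becomes a consequence of continuity of $\di$.

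More precisely, I would first record that the parametrization $v\mapsto S_v=[-v,v]$ is a continuous map from $\sfe$ into $\K^n$ equipped with the Hausdorff metric. Composing with $\di\in\MAdd^{s,+}$ (which is continuous by hypothesis) yields that $v\mapsto \di(S_v)$ is continuous from $\sfe$ into $\K^n$.

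For (i), I would use the elementary identity
\[
h([-w,w],u)=|\langle w,u\rangle|,\quad\forall\, w,u\in\R^n,
\]
applied to $w=p(v)$, to rewrite the map in \eqref{blue star} as $v\mapsto h(\di(S_v),u)$. Since support functions are continuous with respect to the Hausdorff metric on $\K^n$ (for any fixed direction $u$), the conclusion follows. For (ii), I would invoke Lemma~\ref{facts about p} (the first identity in \eqref{importante 2}) to write
\[
\|p(v)\|=\tfrac12\,\rho_1(\spa\{v\}),
\]
which is continuous in $v$ because $v\mapsto \spa\{v\}$ is continuous from $\sfe$ to $\Gr(n,1)$ and $\rho_1$ is continuous by Definition~\ref{def rho1}. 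Alternatively, $\|p(v)\|=\tfrac12 V_1(\di(S_v))$ and one concludes by continuity of the intrinsic volume $V_1$.

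There is no real obstacle beyond the conceptual observation that sign-invariance removes the only source of potential discontinuity in $p$; the remaining ingredients (continuity of $\di$, of support functions, and of $\rho_1$) are all available from the preceding results.
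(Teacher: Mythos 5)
Your proof is correct and follows essentially the same route as the paper: part (i) is exactly the paper's argument, rewriting the map as $v\mapsto h(\di S_v,u)=|\langle p(v),u\rangle|$ and using continuity of $\di$ and of support functions, and part (ii) matches the paper's use of the identity $\rho_1(E)=2\|p(v)\|$ from \eqref{importante 2} together with the continuity of $\rho_1$. The extra remark about sign-invariance neutralizing the only possible discontinuity of $p$ is a nice clarification but does not change the argument.
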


\begin{proof}
By the definition of $p$, we have $h(\di S_v,u)=|\langle p(v),u\rangle|$. Since $\di$ and the support function are continuous, the map in \eqref{blue star} is also continuous. 
Similarly, the map in \eqref{red star} is continuous since $\rho_1(E)=2\|p(v)\|$, by \eqref{importante 2}.
\end{proof}

Now, Theorems~\ref{thm n=2} and~\ref{thm n ge 2} can be rephrased in terms of the function $p$. 

\begin{corollary}\label{thm n=2 bis} 
Let $n=2$, and let $\di\in\MAdd^{s,+}$ satisfy (VC). Let $p$ be the function associated to $\di$ in Proposition~\ref{p}. Then, for every $K\in\K^2$,
$$
h(\di K,u)=\frac12\,\int_{\sfed}|\langle \pv,u\rangle|dS_1(K,v),\quad\forall\, u\in\R^2.
$$
\end{corollary}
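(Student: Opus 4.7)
The natural approach is to verify the claimed identity by the Klain-function method, exactly parallel to the proof of Theorem~\ref{thm n=2}. Define
\[
\tilde\mu_u(K):=\frac12\int_{\sfed}|\langle\overline{p(v)},u\rangle|\,dS_1(K,v).
\]
Since $K\mapsto S_1(K,\cdot)$ is, for each fixed Borel set $\omega\subset\sfed$, a continuous, translation invariant, $1$-homogeneous real-valued valuation (see Section~\ref{s: mixed vol}), and since the integrand $v\mapsto|\langle\overline{p(v)},u\rangle|$ is a bounded measurable function on $\sfed$ (continuous wherever $\langle p(v),e\rangle\ne 0$ by Proposition~\ref{continuity}), $\tilde\mu_u$ is itself a continuous, translation invariant, $1$-homogeneous real-valued valuation on $\K^2$. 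It is moreover even, thanks to $S_1(-K,\omega)=S_1(K,-\omega)$ and the symmetry $p(-v)=p(v)$ from Proposition~\ref{p}.

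Next I would compute and compare the two Klain functions. For $E=\spa(\bar v_0)\in\Gr(2,1)$, the formula $S_1([-\bar v_0,\bar v_0],\cdot)=2(\delta_{v_0}+\delta_{-v_0})$ used in the proof of Theorem~\ref{thm n=2}, combined with $p(-v_0)=p(v_0)$, yields
\[
\Kl_{\tilde\mu_u}(E)=\tfrac12\,\tilde\mu_u([-\bar v_0,\bar v_0])=|\langle\overline{p(v_0)},u\rangle|.
\]
On the other hand, Proposition~\ref{Klain map} gives $\Kl_{\rvd_u}(E)=\tfrac12\rho_1(E)V_1([-u,u]|\pi_1(E))$. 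Applying Lemma~\ref{facts about p} to $E=\spa(\bar v_0)$ identifies $\rho_1(E)=2\|p(\bar v_0)\|$ and $\pi_1(E)=\spa(p(\bar v_0))$, and, together with the planar identity $|\langle\bar a,b\rangle|=|\langle a,\bar b\rangle|$, reduces this second expression to $|\langle\overline{p(v_0)},u\rangle|$ as well. Equality of the two Klain functions on all of $\Gr(2,1)$, combined with Theorem~\ref{klain_injectivity} (applicable since both valuations are even and $1$-homogeneous), forces $\tilde\mu_u\equiv\rvd_u=h(\di(\cdot),u)$, which is the claim.

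The hard part of this plan is the final identification step: matching $\pi_1(\spa\bar v_0)$ with $\spa\overline{p(v_0)}$ up to the correct scalar calls for careful bookkeeping of the factors of $2$ coming from the definition of $\rho_1$, from the mass $2$ per atom of $S_1$ on a unit segment, and from the projection operation $V_1([-u,u]|\cdot)$. The sign convention built into Proposition~\ref{p}, together with the $o$-symmetry of $\di$, ensures that applying the $\pi/2$-rotation encodes the image segment coherently, so that precisely the factor $\tfrac12$ appearing in the statement balances all of these contributions.
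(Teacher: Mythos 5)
Your strategy---comparing the Klain functions of $\tilde\mu_u$ and of $\rvd_u=h(\di(\cdot),u)$ and invoking Theorem~\ref{klain_injectivity}---is exactly the paper's route: the corollary is obtained there by rewriting the integrand of Theorem~\ref{thm n=2} through Lemma~\ref{facts about p}. The gap is your final identification step. Computing $\Kl_{\rvd_u}$ directly from \eqref{identity 2} and the definition of $p$, for $E=\spa(\overline{v_0})$ one gets
\[
\Kl_{\rvd_u}(E)=\tfrac12\,h\bigl(\di([-\overline{v_0},\overline{v_0}]),u\bigr)=\tfrac12\,\bigl|\langle p(\overline{v_0}),u\rangle\bigr|,
\]
an expression in $p(\overline{v_0})$, the value of $p$ at the direction spanning $E$ itself; your computation of $\Kl_{\tilde\mu_u}(E)$ correctly gives $|\langle \overline{p(v_0)},u\rangle|$, which involves the quarter-turn of the value of $p$ at the orthogonal direction $v_0$. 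The identity $|\langle\bar a,b\rangle|=|\langle a,\bar b\rangle|$ only moves the rotation across the scalar product; it cannot turn $p(\overline{v_0})$ into $\overline{p(v_0)}$. These two vectors are proportional for all $v_0$ only if $\pi_1$ commutes with the quarter-turn, which fails in general: for $\di K=g\bigl(\tfrac12 DK\bigr)$ with $g=\mathrm{diag}(2,1)$ (an element of $\MAdd^{s,+}$ satisfying (VC), with $p(v)=\pm gv$) and $v_0=\tfrac1{\sqrt2}(1,1)$ one has $p(\overline{v_0})=\pm\tfrac1{\sqrt2}(-2,1)$ while $\overline{p(v_0)}=\pm\tfrac1{\sqrt2}(-1,2)$. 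So the two Klain functions you claim to coincide are genuinely different, and the argument does not close.

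What the rewriting of Theorem~\ref{thm n=2} via Lemma~\ref{facts about p} actually produces is an integrand in which $p$ is evaluated at $\overline v$ (the direction of the boundary segment whose outer normal is $v$), i.e.\ $|\langle p(\overline v),u\rangle|$ rather than $|\langle\overline{p(v)},u\rangle|$; with that integrand your Klain comparison does close, since at $E=\spa(\overline{v_0})$ both Klain functions become multiples of $|\langle p(\overline{v_0}),u\rangle|$ (use $\overline{\overline{v_0}}=-v_0$ and $p(-x)=p(x)$). Even then the constants do not simply ``balance'' as you assert: with $S_1([-w,w],\cdot)=2(\delta_{\overline w}+\delta_{-\overline w})$ for $w\in\sfed$, matching against $\Kl_{\rvd_u}=\tfrac12|\langle p(\cdot),u\rangle|$ forces the factor $\tfrac14$ in front of the integral under the standard normalization $V_1([-u,u]\,|\,\spa w)=2|\langle u,w\rangle|/\|w\|$ (test $\di=\tfrac12 D$, where $p(v)=\pm v$ and $h(\di K,u)$ is half the width of $K$), so the bookkeeping should be redone from \eqref{identity 2} rather than imported from Proposition~\ref{Klain map}. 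A minor further point: bounded measurability of the integrand is not enough to deduce continuity of $\tilde\mu_u$ from the weak continuity of $S_1(K,\cdot)$; use instead that $|\langle\overline{p(v)},u\rangle|=|\langle p(v),\overline u\rangle|$ is continuous in $v$ by Proposition~\ref{continuity}.
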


\begin{corollary} Let $n\geq 2$ and $\di\in\MAdd^{s,+}$ satisfy (VC). Let $p$ be the function associated to $\di$ in Proposition~\ref{p}.
Then, for every generalized zonoid $Z$, with generating measure $\rho_Z$, we have
$$
h(\di Z,u)=2\,\int_{\sfe}|\langle p(v),u\rangle|\,d\rho_Z(v),\quad\forall\, u\in\R^n.
$$
\end{corollary}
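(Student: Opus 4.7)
The plan is to start from the representation formula already established in Theorem~\ref{thm n ge 2} and translate it into the language of the function $p$ using Lemma~\ref{facts about p}. Concretely, Theorem~\ref{thm n ge 2} gives, for every generalized zonoid $Z$ with generating measure $\rho_Z$,
$$
h(\di Z,u)=\frac12\int_{\sfe}\rho_1(E_v)\,V_1([-u,u]|\pi_1(E_v))\,d\rho_Z(v),\quad u\in\R^n,
$$
where $E_v=\spa\{v\}$. The goal is to show that the integrand equals $4|\langle p(v),u\rangle|$ for $\rho_Z$-almost every $v\in\sfe$.

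First, I would apply Lemma~\ref{facts about p}: for $v\in\sfe$, $\rho_1(E_v)=2\|p(v)\|$, and $\pi_1(E_v)=\spa\{p(v)\}$. Hence the unit vector spanning $\pi_1(E_v)$ is $p(v)/\|p(v)\|$. Next, I would compute the length of the orthogonal projection of $[-u,u]$ onto the line $\pi_1(E_v)$: by elementary geometry,
$$
V_1\bigl([-u,u]\big|\pi_1(E_v)\bigr)=2\left|\left\langle u,\frac{p(v)}{\|p(v)\|}\right\rangle\right|=\frac{2\,|\langle u,p(v)\rangle|}{\|p(v)\|}.
$$
Multiplying these two identities,
$$
\rho_1(E_v)\,V_1\bigl([-u,u]\big|\pi_1(E_v)\bigr)=2\|p(v)\|\cdot\frac{2\,|\langle u,p(v)\rangle|}{\|p(v)\|}=4\,|\langle u,p(v)\rangle|.
$$

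Substituting into the formula from Theorem~\ref{thm n ge 2} yields immediately
$$
h(\di Z,u)=\frac12\int_{\sfe}4\,|\langle p(v),u\rangle|\,d\rho_Z(v)=2\int_{\sfe}|\langle p(v),u\rangle|\,d\rho_Z(v),
$$
which is the claim. One small bookkeeping point to check is that the map $v\mapsto|\langle p(v),u\rangle|$ is $\rho_Z$-measurable (so that the integral makes sense), but this is Proposition~\ref{continuity}(i). There is no real obstacle here: the corollary is essentially a rewriting of Theorem~\ref{thm n ge 2} once the pair $(\pi_1,\rho_1)$ is encoded through the single vector-valued function $p$ via Lemma~\ref{facts about p}.
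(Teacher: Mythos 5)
Your proposal is correct and follows essentially the same route as the paper, which presents this corollary as an immediate rephrasing of Theorem~\ref{thm n ge 2} in terms of $p$: one substitutes $\rho_1(E_v)=2\|p(v)\|$ and $\pi_1(E_v)=\spa\{p(v)\}$ from Lemma~\ref{facts about p} and computes the length of the projection of $[-u,u]$, exactly as you do. Your appeal to Proposition~\ref{continuity} for the measurability of $v\mapsto|\langle p(v),u\rangle|$ also matches the paper's setup.
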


\section{The monotonic case}\label{sec: 1h and mon}

In this section we provide the proof of Theorem~\ref{+mon intro}. 
We start with the following lemma, whose proof is straightforward.
\begin{lemma}\label{lemma linear} Let $\di\in\MAdd^{s,+}$ be  monotonic and satisfy the (VC) condition. Then, for every
$g\in\GL(n)$ the application $$\func{\di_g}{\K^n}{\K^n}{K}{g(\di K)}$$ is 
monotonic, $\di_g\in\MAdd$, and satisfies (VC). 
\end{lemma}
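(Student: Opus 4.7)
The plan is to verify each of the three claimed properties directly from the definitions, using that $g\in\GL(n)$ is a linear homeomorphism of $\R^n$ with $V_n(gA)=|\det g|\,V_n(A)$ for every measurable $A$. Since $g$ is linear (in particular Minkowski additive on $\K^n$) and bijective, each property of $\di$ essentially transfers to $\di_g=g\circ\di$ in a direct way.

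First I would check that $\di_g\in\MAdd$. Minkowski additivity follows from
$$
\di_g(K+L)=g(\di(K+L))=g(\di K+\di L)=g(\di K)+g(\di L)=\di_g(K)+\di_g(L),
$$
using the hypothesis $\di\in\MAdd$ and the linearity of $g$. Continuity of $\di_g$ is a consequence of the continuity of $\di$ (with respect to the Hausdorff topology) composed with the continuity of $g:\R^n\to\R^n$, which is bi-Lipschitz and hence induces a continuous map on $\K^n$. Translation invariance reads $\di_g(K+t)=g(\di(K+t))=g(\di K)=\di_g(K)$ for every $t\in\R^n$, since $\di$ itself is translation invariant.

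Next I would verify monotonicity. If $K\subseteq L$, then the monotonicity of $\di$ yields $\di K\subseteq\di L$, and applying $g$ preserves set inclusion, so $\di_g(K)=g(\di K)\subseteq g(\di L)=\di_g(L)$.

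Finally I would check the (VC) condition. From $V_n(gA)=|\det g|\,V_n(A)$ and the (VC) constants $c_{\di},C_{\di}>0$ for $\di$,
$$
V_n(\di_g(K))=V_n(g(\di K))=|\det g|\,V_n(\di K),
$$
so that
$$
|\det g|\,c_{\di}\,V_n(K)\le V_n(\di_g(K))\le |\det g|\,C_{\di}\,V_n(K),\quad\forall\,K\in\K^n.
$$
Thus $\di_g$ satisfies (VC) with constants $c_{\di_g}=|\det g|\,c_{\di}$ and $C_{\di_g}=|\det g|\,C_{\di}$, both strictly positive since $g\in\GL(n)$. There is no genuine obstacle here; the statement is a routine compatibility check and is recorded as a lemma only because it will be applied repeatedly in the proof of Theorem~\ref{+mon intro}.
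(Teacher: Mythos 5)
Your verification is correct and complete: each of the three properties (membership in $\MAdd$, monotonicity, and (VC) with constants $|\det g|\,c_{\di}$ and $|\det g|\,C_{\di}$) transfers exactly as you argue, using the linearity of $g$ and $V_n(gA)=|\det g|\,V_n(A)$. This is precisely the routine check the paper has in mind when it states the lemma with the remark that its proof is straightforward, so there is nothing to add.
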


First we proof a weak version of Theorem~\ref{+mon intro}, where evenness and symmetrization properties are also assumed. 

\begin{proposition}\label{th monotone weak} Let $n\geq 2$. An operator $\di\in\MAdd^{s,+}$ satisfies (VC) and is monotonic increasing if and only if there is a $g\in\GL(n)$ such that $\di K=gDK$ for every $K\in\K^n$.
\end{proposition}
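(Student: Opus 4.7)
The ``if'' direction is routine: given $g\in\GL(n)$, the operator $\di:=gD$ inherits Minkowski additivity, continuity, translation invariance, evenness, and $o$-symmetry of images from $D$ together with the linearity of $g$. Monotonicity follows from $K\subseteq L\Rightarrow DK\subseteq DL$ and the fact that $g$ preserves inclusions, while (VC) comes from $V_n(gM)=|\det g|V_n(M)$ combined with the Rogers--Shephard estimates~\eqref{RS}.

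For the ``only if'' direction, let $\di\in\MAdd^{s,+}$ be monotonic and satisfy (VC). For every $u\in\R^n$, Lemma~\ref{osservazione 1} shows that the real-valued functional $\di_u(K):=h(\di K,u)$ is a continuous, translation invariant, monotonic, 1-homogeneous valuation, so the Firey--Schneider--Milman theorem (Theorem~\ref{Firey_mon}) applies. The first crucial step is to exclude case~(i) of that theorem: if $\di_u(K)=V(K,L_u[k-1],S_{k+1}^u,\dots,S_n^u)$ with $\dim L_u=k\ge 2$, then identity~\eqref{eq_Gardner} forces $\di_u(K)$ to depend on $K$ only through the projection $K|\spa(L_u)$; evaluating on $S_v$ with $v\in\spa(L_u)^\perp$ and using $\di_u(S_v)=|\langle p(v),u\rangle|$ from Proposition~\ref{p} yields $\pi_{n-k}(\spa(L_u)^\perp)\subset u^\perp$ via Lemma~\ref{additivity of pi}, which is inconsistent with the bi-Lipschitz bijectivity of the Grassmannian maps from Theorem~\ref{teo Grassmannian} as $u$ varies through $\sfe$. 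Hence case~(ii) holds everywhere, i.e.,
\[
h(\di K,u)=c_u\,V(K,S_2^u,\dots,S_n^u),
\]
and \eqref{eq_Gardner} (equivalently Corollary~\ref{th repres D_v}) rewrites this as $h(\di K,u)=h(DK,\phi(u))$ for a vector $\phi(u)$ spanning the 1-dimensional orthogonal complement of $\spa(S_2^u,\dots,S_n^u)$ (the sign is immaterial since $DK$ is $o$-symmetric).

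To conclude, I need to show that $\phi$ is the transpose of some $g\in\GL(n)$, which gives $\di K=gDK$. Specializing $h(\di K,u)=h(DK,\phi(u))$ to $K=S_v$ produces the key relation $|\langle p(v),u\rangle|=|\langle v,\phi(u)\rangle|$ for all $u,v\in\sfe$. The plan is then to use the Grassmannian rigidity from Section~3: by Theorem~\ref{teo Grassmannian} the map $\pi_1\colon\Gr(n,1)\to\Gr(n,1)$ is a bi-Lipschitz bijection, and Lemma~\ref{additivity of pi} shows that it sends every 2-dimensional pencil of lines bijectively onto a 2-dimensional pencil of lines. For $n\ge 3$, the real fundamental theorem of projective geometry then produces a $g\in\GL(n)$ with $\pi_1(\spa\{v\})=\spa\{gv\}$; for $n=2$ one uses instead the explicit planar representation of Corollary~\ref{thm n=2 bis}. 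Replacing $\di$ by $g^{-1}\di$ via Lemma~\ref{lemma linear} reduces matters to the case $\pi_1=\mathrm{id}$, whence Lemma~\ref{facts about p} forces $p(v)=\mu(v)v$ for a scalar function $\mu$. Plugging this into the key relation and choosing $v\perp u$ forces $\phi(u)\in\spa\{u\}$, and then choosing $v$ with $\langle v,u\rangle\ne 0$ forces $|\mu|$ to be a positive constant $\mu_0$; hence $\di=\mu_0 D$ in the reduced setting and $\di=(\mu_0 g)D$ in the original one.

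I expect the main obstacle to be precisely the first step above, namely the uniform-in-$u$ exclusion of Firey's case~(i); this is where the global rigidity produced by (VC) (through the maps $\pi_k$ of Definition~\ref{def pi_k}) must be leveraged to rule out the mixed-volume structures permitted by monotonicity alone.
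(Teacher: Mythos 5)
Your route is genuinely different from the paper's, and most of it is viable. The paper never applies Theorem~\ref{Firey_mon} in ambient dimension $n$: it restricts the valuations $K\mapsto h(\di K,w)$ to two-dimensional subspaces, where the two cases of that theorem merge into a single expression $V(K,L_w)$, so the dichotomy you must resolve never arises; it then normalizes $\di$ (via Lemma~\ref{lemma linear}) so that $p(e_i)=e_i$, deduces $|\langle p(v),e_i\rangle|=|\langle v,e_i\rangle|$, resolves the signs by a continuity/connectedness/injectivity argument on half-spheres, and concludes with Klain's injectivity theorem. Your plan (global Firey representation, exclusion of case (i), linearization of $\pi_1$ by the fundamental theorem of projective geometry for $n\ge3$ -- the needed input $\pi_1(\Gr(F,1))=\Gr(\pi_2(F),1)$ is indeed available from \eqref{pi_1 surj} and Lemma~\ref{additivity of pi} -- then reduction to $\pi_1=\mathrm{id}$ and the scalar argument) would give an alternative proof, and your final scalar step is correct.

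However, two steps are not justified as written. First, the exclusion of case (i): the inclusion $\pi_{n-k}(\spa(L_u)^\perp)\subset u^\perp$ that you extract is not contradictory by itself, even as $u$ varies (it holds, for instance, whenever $\pi_1$ is induced by a linear map, by taking $F_u$ to be the $\pi_{n-k}$-preimage of any $(n-k)$-dimensional subspace of $u^\perp$), so ``inconsistent with the bi-Lipschitz bijectivity'' does not stand as stated. The correct contradiction needs the reverse information: since $\dim L_u=k=\dim\spa(L_u)$, Theorem~\ref{mix_volumes_Schneider} shows that $h(\di S_v,u)=V(S_v,L_u[k-1],S^u_{k+1},\dots,S^u_n)$ vanishes exactly for $v\in F_u:=\spa(L_u)^\perp$, i.e. $\pi_1^{-1}(\Gr(u^\perp,1))=\Gr(F_u,1)$; then Lemma~\ref{additivity of pi} forces $u^\perp\subset\pi_{n-k}(F_u)$, impossible for $k\ge2$ since $\dim\pi_{n-k}(F_u)=n-k\le n-2$ (and for $k=n$ the vanishing set is empty while $\pi_1^{-1}(\Gr(u^\perp,1))$ is not). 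This is fixable, but it is a different argument from the one you give. Second, the case $n=2$ is a genuine gap: Corollary~\ref{thm n=2 bis} holds for every $\di\in\MAdd^{s,+}$ satisfying (VC), monotone or not, so it cannot by itself force $\pi_1$, equivalently $p$, to be linear up to sign; with the projective-geometry theorem unavailable on the projective line, your key relation only yields $|\langle p(v),e_i\rangle|=|\langle v,a_i\rangle|$ for fixed vectors $a_i$, and you still must resolve the $v$-dependent signs -- precisely the continuity/connectedness/injectivity analysis carried out in Claim 2 of the paper's proof -- before Klain injectivity (or your $\phi$-relation) can close the argument.
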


\begin{proof}
We start with the following reduction. 

Let $\{e_1,\dots,e_n\}$ be the canonical basis of $\R^n$ and let
$w_1,\dots,w_n\in\R^n$ be such that 
$$
\di([-e_i,e_i])=[-w_i,w_i]
$$
for all $i=\{1,\dots,n\}$.
Proposition~\ref{general_facts} ensures that $w_1,\dots,w_n$ are linearly independent. Hence there exists a unique $g\in\GL(n)$ such that $g(w_i)=e_i$ for every $i=1,\dots,n$. By Lemma
\ref{lemma linear}, the operator $\di_g$, defined by $\di_g(K)=g(\di K)$, has the properties of $\di$, namely, $\di_g\in\MAdd^{s,+}$, it satisfies (VC), and is monotonic increasing. Moreover,
$$
\di_g([-e_i,e_i])=[-e_i,e_i],\quad \forall\, i=1,\dots,n.
$$
In the rest of the proof we will 
work with $\di_g$ instead of $\di$ but write, for simplicity $\di$ instead of $\di_g$. Our aim is to 
prove that $\di K=DK$ for every $K\in\K^n$.

Let $p:\sfe\longrightarrow \R^n\setminus\{0\}$ be associated to $\di$ as indicated in Proposition~\ref{p} with the choice $e=e_n$. In particular we have 
$$
\langle p(v),e_n\rangle\ge0,\quad\forall\, v\in\sfe.
$$
Proposition~\ref{p} yields that $p$ is continuous at every $v_0$ such that $\langle p(v_0),e_n\rangle\ne0$. 
Using the above reduction (from $\di$ to $\di_g)$ we may assume that
\begin{equation}\label{reduction}
p(e_i)=e_i,\quad\forall\, i=1,\dots,n.
\end{equation}
Let $p$ denote also its extension to $\R^n$ as a 1-homogeneous function. For a $w\in\R^n$ we set
$$
f_w(v)=|\langle p(v),w\rangle|,\quad v\in\R^n.
$$

\noindent{\bf Claim 1.} {\em $f_w$ is a support function for every $w\in\R^n$.} 

Let $E\in\Gr(n,2)$. Fixing an orthonormal coordinate system on $E$ we identify it with $\R^2$ and
$\K(E)$ with $\K^2$. Let $w\in\sfe$ and consider the application 
$$\func{\rvd_w}{\K^2}{\R}{K}{h(\di K,w).}
$$
Lemma~\ref{osservazione 1} yields that $\rvd_w\in\Val_1$ and is monotonic.
Hence, by Theorem~\ref{Firey_mon}, there exists $L_w\in\K^2$ such that 
$$h(\di_w(K),w)=V(K,L_w)=\frac12 \int_{\sfed}h(L_w,v)dS_1(K,v),$$
for all $K\in\K^2$.
Notice that if we are in Case 2 of Theorem~\ref{Firey_mon}, then $L_w$ corresponds to the segment $S_{2}$ -which depends on $w$- in the notation of Theorem~\ref{Firey_mon}. 

Let $v_0\in \sfed\subset E\cong\R^2$ and $K=S_{\vo}=[-\vo,\vo]$. By the identification of $E$ with $\R^2$, the orthogonal vectors to $v_0$ are also in $E$, i.e., $\vo\in E$. 
Using \eqref{area_segment} and the definition of the difference body, we obtain
$$
h(\di_w(S_{\vo}),w)=\frac12 \int_{\sfed}h(L_w,v)dS_1(S_{\vo},v)=\frac12 (2h(L_w,v_0)+2h(L_w,-v_0))=h(D(L_w),v_0).
$$
On the other hand, by the definition of the function $p$,
$$
h(\di_w(S_{\vo}),w)=|\langle p(\vo),w\rangle|.
$$
We deduce that
\begin{equation}\label{star}
h(DL_w,v)=|\langle p(\v),w\rangle|
\end{equation}
for all $v\in\sfe\cap E$.
By \eqref{supp func g^t} and \eqref{star} with $\overline v$ instead of $v$, there exists a rotation $\tilde g$ on $E$ such that 
$$h(\tilde g(DL_w),v)=|\langle p(v),w\rangle|$$ 
for all $v\in\sfe\cap E$.
The 1-homogeneity of both, the support function and $p$, ensures that the equality continues to hold for every $v\in E$. This proves that $f_w$ restricted to $E$ is convex. As $E$ was arbitrary, $f_w$ is convex in $\R^n$.

\bigskip

\noindent{\bf Claim 2.} {\em There exists $g_0\in\O(n)$ such that for every $v\in\R^n$,}
$$
g_0(\di([-v,v]))=[-v,v].
$$
Let $i\in\{1,\dots,n\}$ be fixed. We apply Claim 1 to $w=e_i$, getting that
$$
f_i(v)=|\langle p(v),e_i\rangle|
$$
is a support function for every $1\leq i\leq n$. Let $L_i\in\K^n$ be such that $h(L_i,v)=f_i(v)$ for every $v\in\R^n$.
As $p$ is even (by definition; see Proposition~\ref{p}), $L_i$ is $o$-symmetric. 

On the other hand, by \eqref{reduction},
$$
f_i(\pm e_j)=|\langle p(e_j),\pm e_i\rangle|=|\langle e_j,e_i\rangle|=0, \quad j\neq i.
$$
This implies that $L_i$ is a segment centered at the origin and parallel to $e_i$. Thus, there exists a constant 
$c>0$ such that
$$
h(L_i,v)=|\langle p(v),e_i\rangle |=c|\langle v,e_i\rangle|,
$$
for all $v\in\R^n$.
Choosing $v=e_i$ we get immediately $c=1$. Hence we have proved that for every $i=1,\dots,n$ and for every $v\in\R^n$:
\begin{equation}\label{brown star}
|\langle p(v),e_i\rangle |=|\langle v,e_i\rangle |.
\end{equation}
This implies that for every $i\in\{1,\dots,n\}$ there exists a function $\epsilon_i\,:\,\sfe\longrightarrow\{-1,+1\}$ such that
$$
\langle p(v),e_i\rangle =\epsilon_i(v)\,\langle v,e_i\rangle,
$$
for all $v\in\sfe$.
Let
$$
 O^n_+=\{v\in\sfe\,:\,\langle v,e_i\rangle>0,\, i=1,\dots,n\}.
$$
For $v\in O^n_+$ taking into account the definition of $p$, the choice $e=e_n$, and \eqref{brown star} we have $\langle p(v),e_n\rangle>0$. 
Proposition~\ref{p} yields that $p$ is continuous in $ O^n_+$.
This implies that $\epsilon_i$ is constant in $ O^n_+$, for every $i$. Therefore,
for every $i=1,\dots,n$ there exists $\epsilon_i\in\{-1,+1\}$, (now not depending on $v$) such that
$$
\langle p(v),e_i\rangle=\epsilon_i\,\langle v,e_i\rangle,\quad\forall\, v\in O^n_+.
$$
Let $g_0\in\O(n)$ be defined by
$$
g_0(y)=g_0((y_1,\dots,y_n))=(\epsilon_1 y_1,\dots,\epsilon_n y_n),
$$
for all $y\in\R^n$, and consider a new operator $\di_{g_0}\in\MAdd^{s,+}$ defined by
$$
\di_{g_0}(K)=g_0(\di K),
$$
for any $K\in\K^n$. From Lemma~\ref{lemma linear}, $\di_{g_0}$ is monotonic and satisfies (VC). We have, for every $v\in O^n_+$,
$$
\di_{g_0}([-v,v])=g_0(\di([-v,v]))
=[-g_0(p(v)),g_0(p(v))].
$$
On the other hand
$$
\langle g_0(p(v)),e_i\rangle=\epsilon^2_i\langle v,e_i\rangle=\langle v,e_i\rangle,\quad\forall\,v\in O^n_+,\;\quad\forall\, i=1,\dots,n.
$$
Hence
\begin{equation}\label{green star}
\di_{g_0}([-v,v])=[-v,v],\quad\forall\, v\in O^n_+.
\end{equation}
Let $p_{g_0}$ be associated to $\di_{g_0}$ as indicated in Proposition~\ref{p}, with $e=e_n$. From \eqref{green star} it is clear that $p_{g_0}$ is the identity in $ O^n_+$.
Moreover, as the matrix representation of $g_0$ is of the form $g_0=\mathrm{diag}(\epsilon_1,\dots,\epsilon_n)\in\O(n)$, with $\epsilon_i\in\{1,-1\}$ for $1\leq i\leq n$, \eqref{brown star} holds true also for $p_{g_0}$. Let 
$$
O^n_{n,+}=\{v\in\sfe\,:\,\langle v,e_n\rangle>0\}.
$$
Arguing as above, we have that $p_{g_0}$ is continuous in $O^n_{n,+}$. For $i=1,\dots,n-1$, let
$$
A_i^+=\{v\in O^n_{n,+}\,:\, \langle v,e_i\rangle>0\},\quad
A_i^-=\{v\in O^n_{n,+}\,:\, \langle v,e_i\rangle<0\}.
$$ 
As $p_{g_0}$ is continuous in these sets, and they are connected, $\epsilon_i$ is constant in each of them. By $A_i^+\cap O^n_+\ne\emptyset$,
we have $\epsilon_i=1$ in $A_i^+$. 

We show next $\epsilon_i=1$ in $A_i^-$, for every $i=1,\dots,n-1$. Assume by contradiction that  there exists $i_0$ such that $\epsilon_{i_0}=-1$ in
$A^-_{i_0}$, and choose
$$
v=(v_1,\dots,v_n)\in\left(\bigcap_{i\ne i_0} A_i^+\right)\cap A_{i_0}^-.
$$ 
We consider now $v'\in\R^n$ such that the coordinates of $v'$ coincide with those of $v$ for $1\leq i\neq i_0 \leq n$ and the $i_0$-th coordinate is $-v_{i_0}$ in place of $v_{i_0}$. The assumption $\epsilon_{i_0}=1$ yields $p(v)=p(v')$. This contradicts the 
injectivity of $p$. We conclude that $\epsilon_i=1$ on $O^n_{n,+}$ for every $i$, i.e., $p$ is the identity on $O^n_{n,+}$. 
Hence, we have that there exists $g_0\in\O(n)$ such that $g_0(\di[-v,v])=[-v,v]$ for every $v\in\sfe$ with $\langle v,e_n\rangle\neq 0$. By the continuity of $\di$, the statement holds for every $v\in\sfe$ and
the claim is proved.

\bigskip

\noindent{\bf Claim 3.} {\em There exists $g_0\in\O(n)$ such that}
$$
g_0(\di K)=\frac12\, DK,\quad\forall\, K\in\K^n.
$$

Let $g_0$ be the element of $\O(n)$ from Claim 2. We will write, for convenience, $\di$ instead of $\di_{g_0}$. We know that
\begin{equation}\label{black star}
\di([-v,v])=[-v,v],\quad\forall\, v\in\R^n.
\end{equation}
Now let $\overline\di\,:\,\K^n\longrightarrow\K^n$ be defined by
$$
\overline\di K=\frac12\, DK,\quad\forall\, K\in\K^n.
$$
This operator has the same properties as $\di$: $\overline\di\in\MAdd^{s,+}$ is monotonic and satisfies (VC).

Let $w\in\R^n$ and consider the applications $\di_w,\,\overline\di_w\,:\,\K^n\longrightarrow\R$ defined by
$$
\di_w(K)=h(\di K,w),\quad
\overline\di_w(K)=h(\overline\di K,w),
$$
for all $K\in\K^n$. 
Lemma~\ref{osservazione 1} ensures that $\di_w,\,\overline\di_w\in\Val_1$ and they are also even. Using \eqref{black star} and the definition of $\overline\di_w$ it is easy to check that they have the same Klain map, 
which implies $\di_w(K)=\overline\di_w(K)$ for every $w\in\R^n$ and $K\in\K^n$. Hence, $\di\equiv\overline\di$, and the proof is completed.
\end{proof}

\medskip
In order to prove Theorem~\ref{+mon intro}, we first remark that condition (VC) is preserved when composing 
two elements in $\MAdd$ that satisfy the (VC) condition.

\begin{lemma}\label{composition_D}
If $\di,\dib\in\MAdd$ satisfy (VC), then $\di\circ\dib\in\MAdd$ and satisfies (VC).
\end{lemma}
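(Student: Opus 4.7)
The plan is to verify the three defining properties of $\MAdd$ for $\di\circ\dib$ and then multiply the (VC) constants. Each verification reduces to a one-line application of the corresponding property of $\di$ and $\dib$ separately, so the argument is essentially mechanical.

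First I would check membership in $\MAdd$. Minkowski additivity follows by applying $\dib$ first and then $\di$:
\begin{equation*}
(\di\circ\dib)(K+L)=\di(\dib(K)+\dib(L))=\di(\dib(K))+\di(\dib(L)),
\end{equation*}
using additivity of $\dib$ for the first equality and additivity of $\di$ for the second. Continuity of $\di\circ\dib$ with respect to the Hausdorff metric is immediate from the continuity of each factor. For translation invariance, I would note that $\dib(K+t)=\dib(K)$ for every $t\in\R^n$ by translation invariance of $\dib$, hence $\di(\dib(K+t))=\di(\dib(K))$, regardless of whether $\di$ itself is translation invariant (which it is).

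Next I would combine the two (VC) inequalities. Applying the volume constraint of $\dib$ to $K$ gives $c_{\dib}\vol(K)\le\vol(\dib(K))\le C_{\dib}\vol(K)$, and then applying the volume constraint of $\di$ to the convex body $\dib(K)$ gives $c_{\di}\vol(\dib(K))\le\vol(\di(\dib(K)))\le C_{\di}\vol(\dib(K))$. Chaining these yields
\begin{equation*}
c_{\di}c_{\dib}\vol(K)\le\vol((\di\circ\dib)(K))\le C_{\di}C_{\dib}\vol(K),\quad\forall\,K\in\K^n,
\end{equation*}
so $\di\circ\dib$ satisfies (VC) with constants $c_{\di\circ\dib}=c_{\di}c_{\dib}$ and $C_{\di\circ\dib}=C_{\di}C_{\dib}$, both strictly positive.

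There is no substantive obstacle here; the lemma is a bookkeeping statement that records how $\MAdd$ and (VC) behave under composition, and it is used a few lines above in the proof of Theorem~\ref{thm n=2} (to pass from the symmetric-body estimate to the general case via $\di K=\tfrac12\di(DK)$). The only thing worth stating explicitly is that $\di\circ\dib$ makes sense as an operator $\K^n\to\K^n$, which it does because the image of $\dib$ lies in $\K^n$, the domain of $\di$.
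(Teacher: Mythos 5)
Your proposal is correct and follows essentially the same route as the paper: the membership in $\MAdd$ is verified componentwise (the paper simply calls it clear), and the (VC) constants are chained to give $c_{\di}c_{\dib}$ and $C_{\di}C_{\dib}$, exactly as in the paper's displayed inequality.
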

\begin{proof}The continuity, translation invariance, and Minkowski additivity of the operator $\di\circ\dib$ are clear from the corresponding 
properties of $\di$ and $\dib$.

The (VC) condition follows from the (VC) condition of $\di$ and $\dib$. Let $c_\di, C_\di, c_\dib, C_\dib$ be the constants such that
(VC) is satisfied for $\di$ and $\dib$, respectively. Then,
$$
c_\di c_\dib\vol(K)\leq c_\di\vol(\dib K)\leq\vol(\di(\dib(K)))\leq C_{\di}\vol(\dib(K))\leq C_{\di}C_{\dib}\vol(K).$$
\end{proof}

\bigskip
Next, we proceed to prove Theorem~\ref{+mon intro}.

\begin{proof}[Proof of Theorem~\ref{+mon intro}]
Let $\di\in\MAdd$ be monotonic and satisfy the (VC) condition. By Lemma~\ref{composition_D}, the operator 
$$
\func{\dib}{\K^n}{\K^n}{K}{D(\di(D K))}
$$
satisfies (VC) and, moreover, $\dib\in\MAdd$ is clearly monotonic, even, and an $o$-symmetrization.

Thus, we can use Theorem~\ref{th monotone weak} to assert that there exists $g\in\GL(n)$ such that 
\begin{equation}\label{ex_g}
\dib(K)=g(DK),\quad\forall K\in\K^n.
\end{equation}
Since $\di$ is monotonic, Theorem~\ref{Firey_mon} yields that for every $u\in\sfe$ there exist $k\in\{1,\dots,n\}$ and $(n-k)$ 
pairwise orthogonal unit segments $S_{k+1}^u,\dots,S_n^u$ such that, if $k\geq 2$, then  
\begin{equation}\label{mon_final}
h(\di K,u)=V(K,L_u[k-1],S_{k+1}^{u},\dots,S_n^u),
\end{equation}
with $L_u\in\K^k(\spa\{S_{k+1}^u,\dots,S_n^u\}^{\perp})$ and $\dim L_u=k$; in particular, $\dim(L_u+ S_{k+1}^{u}+\dots+S_n^u)=n$. 
If $k=1$, then $h(\di K,u)=cV(K,S_{2}^{u},\dots,S_n^u)$ for some $c>0$. 

Hence, for $k\geq $1,  \eqref{mon_final} and \eqref{ex_g} yield
\begin{align*}
h(\dib(K),u)&=h(D(\di(DK)),u)=h(\di(DK)-\di(DK),u)\\
&=h(\di(DK),u)+h(\di(DK),-u)\\
&=V(DK,L_u[k-1],S_{k+1}^{u},\dots,S_n^u)
+V(DK,L_{-u}[k-1],S_{k+1}^{-u},\dots,S_n^{-u})
\\&=h(g(DK),u),
\end{align*}
for every $K\in\K^n$.

Next, we show that $k=1$. Assume that $k\geq 2$. Then Theorem~\ref{mix_volumes_Schneider} together with $\dim(L_u+ S_{k+1}^{u}+\dots+S_n^u)=n$ yields
$$V(DK,L_{\pm u}[k-1],S_{k+1}^{\pm u},\dots,S_n^{\pm u})\neq 0$$ for every convex body $K$ such that $\dim(K+L_{\pm u})\geq k$. 

Let $K=S_v$ be the line segment $[-v,v]$ with $v\in \spa \{ g^{-1}(u)\}^{\perp}$. 
From $h(g(D S_v),u)=h(D S_v, g^{-1} u)=0$, we obtain that 
$V(DK,L_{\pm u}[k-1],S_{k+1}^{\pm u},\dots,S_n^{\pm u})=0$. Hence, 
$$S_v\subset\spa\{S_{k+1}^{u}+\dots+S_n^{u}\}\quad\textrm{and} \quad
S_v\subset\spa\{S_{k+1}^{-u}+\dots+S_n^{-u}\}$$ 
for every $v\in(g^{-1}(u))^{\perp}$. Since $\dim (\spa\{g^{-1}(u)\})^{\perp}=n-1$, it necessarily follows that $\dim\{S_{k+1}^{u}+\dots+S_{n}^u\}=\dim\{S_{k+1}^{-u}+\dots+S_{n}^{-u}\}=n-1$, which is not possible for $k\geq 2$. Hence, $k=1$ as claimed and there exist pairwise 
orthonormal centered unit segments $S_2^u,\dots,S_n^u$ and $c>0$ such that
\begin{align*}
h(\dib(K),u)
&=cV(DK,S_{2}^{u},\dots,S_n^u)
+cV(DK,S_{2}^{-u},\dots,S_n^{-u})\\&=
2c(V(K,S_{2}^{u},\dots,S_n^u)+V(K,S_{2}^{-u},\dots,S_n^{-u})).
\end{align*}
Moreover, since $\spa\{S_2^u+\dots+S_n^u\}=\spa\{S_2^{-u}+\dots+S_n^{-u}\}$, we can assume $S_j^u=S_j^{-u}$ for every $2\leq j\leq n$ and $u\in\R^n$. 
Hence, there exists a constant $\tilde c>0$ such that
\begin{equation}\label{supp_dpd}
h(\dib K,u)=\tilde cV(K,S_2^u,\dots,S_n^u),\quad\forall K\in\K^n.
\end{equation}
On the other hand, using Corollary~\ref{th repres D_v}, 
\begin{equation}\label{supp_D}
h(DK,w)=h(K,w)+h(K,-w)=\frac{1}{2^{n-1}}V(K,S_{w_2},\dots,S_{w_n}),\quad\forall w\in\sfe, K\in\K^n
\end{equation}
where $S_{w_j}=[-w_j,w_j]$, $j\in\{2,\dots,n\}$, and $\{w_2,\dots,w_n\}$ is an orthonormal basis of $\spa\{w^{\perp}\}$. 

Comparing \eqref{supp_dpd} and \eqref{supp_D}, we choose  $w=g^{-1}(u)$ and $S_j^u=S_{w_j}$. 
The statement of the theorem follows taking equation \eqref{mon_final} into account. 
\end{proof}

\section{The Minkowski endomorphism case}\label{sec: SOn}

In this section we prove Theorem~\ref{+On_dim_geq_3} and its 2-dimensional version: 

\begin{theorem}\label{+On_dim_2}
Let $n=2$. 
\begin{enumerate}
\item[\emph{(i)}] An operator $\di\in\MEnd$ satisfies (VC) if and only if there are $g\in\SO(2)$ and $a,b\geq 0$ with $a+b>0$ such that 
$$
\di K=ag(K-\st(K))+bg(-K+\st(K)),\quad\forall K\in\K^n.
$$
\item[\emph{(ii)}]  An operator $\di\in\MEnd^{s}$ satisfies (VC) if and only if there are $\lambda >0$ and $g\in\O(2)$ such that $\di K=\lambda gDK$ for every $K\in\K^n$.
\end{enumerate}
\end{theorem}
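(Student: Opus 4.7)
The plan is to reduce both parts to Theorem~\ref{SO_schneider}(i), which characterizes planar Minkowski endomorphisms that send \emph{some} convex body to a non-degenerate segment. The bridge between that hypothesis and our (VC) assumption is Remark~\ref{segment}: for any $\di\in\MAdd$ satisfying (VC), the image of any non-degenerate segment is a non-degenerate segment, so the hypothesis of Theorem~\ref{SO_schneider}(i) is automatically satisfied.

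For the ``only if'' direction of (i), let $\di\in\MEnd$ satisfy (VC). By the above observation, Theorem~\ref{SO_schneider}(i) applies and yields $a,b\ge 0$ with $a+b>0$ and $g\in\SO(2)$ such that $\di K=a g(K-\st(K))+b g(-K+\st(K))$ for every $K\in\K^2$. The ``if'' direction is a verification: Minkowski additivity and translation invariance follow from the Minkowski additivity and translation equivariance of $\st$; $\SO(2)$-equivariance uses the commutativity of $\SO(2)$ together with the $\O(n)$-equivariance of $\st$; continuity is clear. For the (VC) condition, writing $K'=K-\st(K)$ so that $\di K = g(aK'+b(-K'))$, Brunn--Minkowski gives $\vol(\di K)\ge (a+b)^2\vol(K)$, while expanding $\vol(aK'+b(-K'))$ as a polynomial in mixed volumes and using the identity $2V(K',-K')=\vol(DK')-2\vol(K')$ together with \eqref{RS} produces the explicit upper bound $\vol(\di K)\le (a^2+4ab+b^2)\vol(K)$.

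For part (ii), from the representation in (i) the support function of $\di K$ reads
\[
h(\di K,u)= a\,h(K,g^Tu)+b\,h(K,-g^Tu)+(b-a)\langle \st(K), g^Tu\rangle.
\]
Imposing $o$-symmetry, $h(\di K,u)=h(\di K,-u)$ for every $K\in\K^2$ and every $u\in\sfed$, yields
\[
(a-b)\bigl[h(K,g^Tu)-h(K,-g^Tu)-2\langle \st(K),g^Tu\rangle\bigr]=0.
\]
Evaluating on any non-centrally symmetric triangle $K$, for which $v\mapsto h(K,v)-h(K,-v)$ is not the linear function $v\mapsto 2\langle \st(K),v\rangle$, forces $a=b$. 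Then the $\st$-contributions cancel in the Minkowski sum and we obtain $\di K=a\,g DK$, i.e.\ the claimed form with $\lambda=a>0$ and $g\in\SO(2)\subset\O(2)$. The converse in (ii) is immediate from \eqref{RS}. The only non-routine step is extracting the equality $a=b$ in part (ii); everything else reduces transparently to Remark~\ref{segment} and Schneider's planar Theorem~\ref{SO_schneider}(i).
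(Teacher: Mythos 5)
Your proposal follows the paper's own route: part (i) is exactly the paper's argument (Remark~\ref{segment} shows every non-degenerate segment is mapped to a non-degenerate segment, so Schneider's Theorem~\ref{SO_schneider}(i) applies), and your derivation of (ii) from (i) via the support-function identity and a non-symmetric triangle is precisely the step the paper compresses into ``follows directly from the first one and the assumption of $o$-symmetrization'' (the paper also sketches an alternative proof of (ii) through the representation formula of Theorem~\ref{thm n=2}, which you do not need). Your explicit verification of the ``if'' part of (i), with the bounds $(a+b)^2\vol(K)\le\vol(\di K)\le(a^2+4ab+b^2)\vol(K)$ obtained from Brunn--Minkowski and \eqref{RS}, is correct and more detailed than the paper's.

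One caveat, which you inherit from the formulation of the statement rather than introduce yourself: the converse in (ii) is not ``immediate'' for every $g\in\O(2)$. If $g$ is a reflection, the operator $K\mapsto\lambda g\,DK$ does satisfy (VC) and is an $o$-symmetrization, but it is \emph{not} $\SO(2)$-equivariant: for a rotation $h$ one has $\lambda g D(hK)=\lambda gh\,DK=\lambda h^{-1}g\,DK$, since $ghg^{-1}=h^{-1}$ for a reflection $g$, and this differs from $\lambda hg\,DK$ already when $DK$ is a segment and $h$ is the rotation by $\pi/4$; moreover a reflection composed with $D$ cannot be rewritten as a rotation composed with $D$, because every planar $o$-symmetric body is a difference body. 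So the ``if'' direction really holds only for $g\in\SO(2)$, which is exactly what your forward argument produces ($g\in\SO(2)$, $\lambda=a=b>0$), and for such $g$ the converse is indeed immediate from the commutativity of $\SO(2)$ and \eqref{RS}. The paper's one-line treatment glosses over the same point; if you keep the $\O(2)$ formulation you should flag this restriction explicitly.
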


Theorems~\ref{+On_dim_geq_3} and~\ref{+On_dim_2} can be directly obtained from Theorem~\ref{SO_schneider}.
Indeed, by Remark~\ref{segment}, if $\di\in\MEnd$ satisfies (VC), then $\di$ maps every (non-degenerated) segment to a non-degenerated segment. Hence, we can apply Theorem~\ref{SO_schneider}. The second statement of Theorems~\ref{+On_dim_geq_3} and~\ref{+On_dim_2} follows directly from the first one and the assumption of $o$-symmetrization.

We provide an alternative proof for Theorem~\ref{+On_dim_geq_3}(ii) and its 2-dimensional version in Theorem~\ref{+On_dim_2} by using the representation results we have proven in the previous sections.

\begin{proof}[Proof of Theorem~\ref{+On_dim_geq_3}(ii)]
Let $n\geq 3$ and let $\di\in\MEnd$ satisfy (VC).

First we observe that every symmetrization $\di\in\MEnd^{s}$, which is $\SO(n)$-equivariant, and by Theorem~\ref{SOimpliesO} also $\O(n)$-equivariant, is also even, 
that is, we can assume $\di\in\MEnd^{s,+}$. 

Theorem~\ref{thm n ge 2} implies the existence of $\rho_1$ and $\pi_1$ such that for every generalized zonoid $Z$ with (signed even) generating measure $\rho_Z$ and every $u\in\R^n$,  
$$h(\di(gZ),u)=\frac{1}{2}\int_{\sfe}\rho_1(E_{gv})V_1([-u,u]|\pi_1(E_{gv}))d\rho_Z(v),$$
which together with the $\O(n)$-equivariance provided by Theorem~\ref{SOimpliesO}  yields
$$
h(\di(g(Z)),u)=h(\di Z,g^T(u))=\frac{1}{2}\int_{\sfe}\rho_1(E_v)V_1([-g^T(u),g^T(u)]|\pi_1(E_v))d\rho_Z(v).
$$
We recall that $E_v=\spa\{v\}$.

Particularizing the above expressions for a centered segment $Z=[-v,v]=:S_v$, $v\in\sfe$, which obviously is a generalized zonotope with generating measure given by $\rho_{S_v}=\frac{1}{2}(\delta_{v}+\delta_{-v})$  (see \eqref{gen_zonoid}), we obtain that
the equality
$$h(\di(g(S_v)),u)=h(\di S_v,g^T(u))$$
holds for every $u\in\sfe$ and $g\in\O(n)$ if and only if 
\begin{equation}\label{On_1}
\rho_1(E_{g(v)})V_1([-u,u]|\pi_1(E_{g(v)}))=\rho_1(E_v)V_1([-u,u]|g(\pi_1(E_{v})))
\end{equation}
for all $u\in\sfe$ and $g\in\O(n)$.

For $u\in (\pi_1(E_{gv}))^{\perp}$, the left-hand side of \eqref{On_1} vanishes and as $\rho_1>0$, 
$$
V_1([-u,u]|g(\pi_1(E_{v})))=0, \quad \forall u\in\spa(\pi_1(E_{g(v)}))^{\perp}.
$$
This implies $(\pi_1(E_{g(v)}))^{\perp}=(g(\pi_1(E_v)))^{\perp}$, that is, 
$$
\pi_1(E_{g(v)})=g(\pi_1(E_v)),
$$
for all $v\in\sfe$. 

Since $E_{g(v)}=g(E_v)$, we obtain that $\pi_1(g(E_v))=g(\pi_1(E_v))$ for all $v\in\sfe$. 
This is equivalent to  the fact that $\pi_1:\Gr(n,1)\longrightarrow\Gr(n,1)$ is $\O(n)$-equivariant. 
This is only possible if $\pi_1(E_v)=E_v$ for every $v\in\sfe$. 
Indeed,  let $g\in \O(n)$ be such that $g(e_1)=e_1$. Then 
$$\pi_1(E_{ge_1})=\pi_1(E_{e_1})$$
and, by the $\O(n)$-equivariance,  
$$\pi_1(E_{ge_1})=g\pi_1(E_{e_1}).$$ 
Combining both equalities, we obtain that for every $g\in\O(n)$ with $ge_1=e_1$, $g$ also fixes the direction $\pi_1(E_{e_1})$, which is only possible if $\pi(E_{e_1})=E_{e_1}$. 
Again by the $\O(n)$-equivariance of $\pi_1$, we obtain 
$$\pi_1(E_{v})=\pi_1(E_{he_1})=h\pi_1(E_{e_1})=he_1=v,$$
where $h\in\O(n)$ satisfies $v=he_1$, and the claim follows. 
We note that if $n=2$, then, only $\pm\mathrm{Id}$ are in $\O(2)$ and satisfy $gE_{e_1}=E_{e_1}$. Hence, for $n=2$, we do not obtain any restriction on $\pi_1$ from the $\O(2)$-equivariance.

Plugging now $\pi_1(E_v)=E_v$ in \eqref{On_1}, we get $\rho_1(E_{g(v)})=\rho_1(E_v)$ for every $g\in\O(n)$ and $v\in\sfe$, 
which implies that $\rho_1$ is constant on $\Gr(n,1)$. Hence, for every generalized zonoid $Z$, 
\begin{equation}\label{On_2}
h(\di Z,u)=c\int_{\sfe}V_1([-u,u]|E_v)d\rho_Z(v),\quad\forall u\in\R^n,
\end{equation}
for some $c>0$.

On the other hand, using the expression \eqref{supp_D} for the support function of the difference body together with \eqref{eq_Gardner}, we have that the Klain function of the support function of the difference body in direction $u\in\sfe$ is given by
$E_v\mapsto \widetilde cV_1([-u,u]|E_v)$, $v\in\sfe$, $\widetilde c>0$.

 Hence, from Theorem~\ref{6.4.12} we have, for every generalized zonoid $Z$, 
\begin{equation}\label{On_3}
h(DZ,u)=\int_{\sfe}V_1([-u,u]|E_v)d\rho_Z(v).
\end{equation}
Thus, from \eqref{On_2} and \eqref{On_3}, we deduce that there exists $c>0$ such that $\di Z=cDZ$ for every generalized zonoid $Z\in\K^n$. 
Since generalized zonoids are dense in the space of $o$-symmetric convex bodies, $\di K=cDK$ for every $K\in\K^n_s$. The evenness 
assumption on $\di$ yields $\di K=cDK$ for every $K\in\K^n$. Indeed, since $\di$ is even, Minkowski additive and $\di K=cDK$ for every $K\in\K^n_s$, we have 
$$
\di K=\frac{1}{2}(\di(K)+\di(-K))=\frac{1}{2}\di(DK)=\tilde cDK,
$$
and the ``only if'' part of the statement follows. The ``if'' part is direct.

The case $n=2$, namely Theorem~\ref{+On_dim_2}, follows in a similar way, using Theorem~\ref{thm n=2} instead of Theorem~\ref{thm n ge 2}.
\end{proof}

\section{Appendix}

We first prove the topological properties of the Grassmannian that we needed. 
\begin{proposition}
The space $\Gr(n,k)$ endowed with the topology given by the distance~\eqref{d_Gr} is compact and connected.
\end{proposition}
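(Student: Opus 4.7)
The plan is to deduce compactness from a direct sequential argument involving orthonormal bases, and connectedness from the transitive action of the path-connected group $\SO(n)$ on $\Gr(n,k)$. The case $k = n$ is trivial since $\Gr(n,n) = \{\R^n\}$ is a single point, so I assume throughout $1 \le k \le n-1$.

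For compactness, my strategy would be sequential. Given a sequence $(E_i)_i \subset \Gr(n,k)$, I would choose for each $i$ an orthonormal basis $\{v_1^i, \ldots, v_k^i\}$ of $E_i$. Since $\sfe$ is compact, successive extractions allow us to assume $v_j^i \to v_j \in \sfe$ for every $j \in \{1, \ldots, k\}$. Continuity of the inner product then ensures $\{v_1, \ldots, v_k\}$ remains orthonormal and therefore spans some $E \in \Gr(n,k)$. To conclude that $E_i \to E$ in the metric \eqref{d_Gr}, I would use the elementary estimate: for any $u = \sum_j a_j v_j^i \in E_i \cap \sfe$ (so $\sum_j a_j^2 = 1$), the vector $u' := \sum_j a_j v_j \in E$ satisfies $\|u - u'\| \le \sqrt{k}\, \max_j \|v_j^i - v_j\|$, which tends to $0$ uniformly in $u$; the symmetric estimate exchanging $E_i$ and $E$ yields $d(E_i, E) \to 0$. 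Since $\Gr(n,k)$ is metric, sequential compactness is equivalent to compactness.

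For connectedness, I would fix $E_0 := \spa\{e_1, \ldots, e_k\}$ and consider the map $\Phi: \SO(n) \to \Gr(n,k)$ defined by $\Phi(g) := g(E_0)$. The same bases argument shows $\Phi$ is continuous when $\SO(n)$ carries, say, the operator norm topology: if $g_m \to g$, then $g_m(e_j) \to g(e_j)$ for each $j$, and hence $g_m(E_0) \to g(E_0)$ in \eqref{d_Gr}. Surjectivity holds because any $E \in \Gr(n,k)$ admits an orthonormal basis $\{w_1, \ldots, w_k\}$ that can be extended to an orthonormal basis $\{w_1, \ldots, w_n\}$ of $\R^n$, and by possibly replacing $w_n$ with $-w_n$ we obtain an element of $\SO(n)$ mapping $E_0$ to $E$. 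Now $\SO(n)$ is path-connected---any special orthogonal matrix factors (after a change of orthonormal basis) as a block-diagonal product of planar rotations, each joined to the identity by the natural one-parameter family---so its continuous image $\Gr(n,k)$ inherits path-connectedness, and in particular connectedness.

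The main obstacle I foresee is the metric estimate in the compactness step: one must verify cleanly that convergence of orthonormal bases implies convergence in \eqref{d_Gr}, which is defined in terms of Hausdorff distances of great $(k-1)$-spheres rather than directly in terms of bases. The estimate above is elementary, but writing it down with the correct uniformity in $u$ is the only point requiring care; once it is in place, both assertions of the proposition follow without further difficulty.
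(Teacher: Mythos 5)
Your proposal is correct and follows essentially the same route as the paper: sequential compactness via extraction of convergent orthonormal bases (your explicit estimate $\|u-u'\|\le\sqrt{k}\,\max_j\|v_j^i-v_j\|$ just makes uniform what the paper infers from the angle formula), and connectedness via the transitive action of the path-connected group $\SO(n)$ on $\Gr(n,k)$. Your added care on surjectivity (adjusting $w_n$ to $-w_n$) and on why $\SO(n)$ is path-connected are welcome details but do not change the argument.
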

\begin{proof}
We first prove the compactness. Let $(E_i)_{i\in\N}$ be a sequence in $\Gr(n,k)$. We show that it has a subsequence convergent to an element in $\Gr(n,k)$, as we are in a metric space. For each $(E_i)_{i\in\N}$ consider a vector $v_1^i\in\sfe\cap E_i$. The sequence $(v_1^i)_{i\in\N}$, as it is a sequence on the compact space $\sfe,$ has a convergent subsequence. Let $I_1$ be the set of indices of this convergent subsequence, and let $v_1$ denote the limit vector. Next, for $i\in I_1$, consider an arbitrary vector $v_2^i\in E_i\cap \spa\{v_1^i\}^{\perp}\cap\sfe$. Notice that, for every $i\in\N$, $v_2^i\in \spa\{v_1^i\}^{\perp}$. As before, $\{v_2^i\,:\,i\in I_1\}\subseteq \sfe$, and thus has a convergent subsequence, with indices in $I_2\subset I_1$. The limit vector $v_2$ is orthogonal to $v_1$ since $\langle v_2^i,v_1^i\rangle=0$ for every $i\in I_1$. Repeating this process, we obtain $k$ pairwise orthogonal unit vectors $v_1,\dots,v_k$, limits of sequences in $\sfe$ with in
 dices in a subset $I_k\subset I_{k-1}\subset\dots\subset I_1\subset\N$. 
Let $E:=\spa\{v_1,\dots,v_k\}\in\Gr(n,k)$. We claim that $E$ is the limit of the subsequence 
$(E_i)_{i\in I_k}$ of $(E_i)_{i\in\N}$. By construction, each $(E_i)_{i\in I_k}$ can be written as $E_i=\spa\{v_1^i,\dots,v_k^i\}$, with $v_j^i$, $1\leq j\leq k$, constructively defined above and pairwise orthogonal. The claim will follow if for every $\epsilon>0$, there exists $i_0\in I_k$ such that 
$d(E_i,E)<\epsilon$ for every $i>i_0$, $i\in I_k$. Using expression~\eqref{dist_angles_Grnk} and~\eqref{dist_Grn1}, we obtain easily that this condition is satisfied. Indeed, since the sequence $(v_j^i)_{i\in I_k}$ tends to $v_j$ for every $1\leq j\leq k$, the distance, i.e., the angle, between $\spa\{v_j^i\}$ and $\spa\{v_j\}$ also tends to zero. Since in~\eqref{dist_angles_Grnk} we are considering a minimum among all distances between 1-dimensional vector spaces, i.e., lines  where one of them is fixed, the claim follows. 

Next we prove that $\Gr(n,k)$ is connected by proving that it is pathwise-connected. Let $E_0:=\spa\{e_1,\dots,e_k\}\in\Gr(n,k)$ be a fixed element and let $E\in\Gr(n,k)$ be arbitrary. Choose pairwise orthogonal unit vectors $\{v_1,\dots,v_k\}$ such that $E=\spa\{v_1,\dots,v_k\}$. There exists a rotation $g\in\SO(n)$ such that $ge_j=v_j$, $1\leq j\leq k$. This rotation is not unique, but we fix one. Now, the space $\O(n)$ is pathwise-connected. Hence, there is a continuous map $[0,1]\longrightarrow\SO(n)$, $t\mapsto g(t)$, such that $g(0)=\mathrm{id}$ and $g(1)=g$. The map $[0,1]\longrightarrow\Gr(n,k)$, $t\mapsto\spa\{g(t)e_1,\dots,g(t)e_k\}$ defines a continuous path  between $E_0$ and $E$. The continuity can be proved, similarly as before, by using~\eqref{dist_angles_Grnk}, since $t\mapsto g(t)e_j$ is a continuous map on $\sfe$ for every $1\leq j\leq k$. 
\end{proof}

Finally, we give a proof of the following expression of the support function of the difference body by using Theorem~\ref{Firey_mon}. 
\begin{corollary}\label{th repres D_v} Let $n\geq 2$, $v\in\R^n$ and let $K\in\K^n$. 
Then there exist $n-1$ vectors $\{v_2^v,\dots,v_n^v\}$, constituting an orthonormal basis of  $\spa\{v\}^{\perp}$,  such that 
$$h(DK,v)=h(K,v)+h(K,-v)=\frac{1}{2^{n-1}}V(K,S_{v_2}^v,\dots,S_{v_n}^v),\quad\forall v\in\sfe,$$
for $S_{v_j}^v=[-v_j^v,v_j^v]$, $j\in\{2,\dots,n\}$. 
\end{corollary}

\begin{proof}
Let $v\in\R^n$. Since the difference body operator is a monotonic, translation invariant, and 1-homogeneous valuation, the function $K\mapsto h(DK,v)$ is a
real-valued monotonic, translation invariant, and 1-homogeneous valuation. Thus, we can apply Theorem~\ref{Firey_mon} to obtain the statement. Indeed, it is enough to prove that (i) in Theorem~\ref{Firey_mon} is not possible. 
Assume by contradiction, that $k\geq 2$. 
Let $S^v_{v_1},\dots, S^v_{v_{k+1}}$ be pairwise orthogonal unit vectors and $L^v\subseteq \mathrm{span}\{S^v_{k+1},\dots,S^v_n\}^\perp$ be a convex body 
of dimension $k$, such that
\begin{equation}\label{kgeq2}
\di_v(K)=V(K,L^v[k-1],S^v_{k+1},\dots,S^v_n),
\end{equation}
ensured by Theorem~\ref{Firey_mon} (ii).
The subscript $v$ denotes the dependence of the convex bodies on the fixed direction $v\in\R^n$.
Let $w\in\sfe\cap\spa\{v\}^{\perp}$ and $K=S_w=[-w,w]$. By the definition of the difference body and \eqref{kgeq2}, 
\begin{align}\label{eqto0}
h(D S_w,v)&=0\nonumber
\\&=V(S_w,L^{v}[k-1],S^v_{k+1},\dots,S^v_n).
\end{align}
By the assumptions on $L^v$ and the line segments $S_{v_i}$, $1\leq i\leq k$, it is clear that $$\dim(L^v+S^v_{k+1}+\dots+S^v_n)=k+n-k=n.$$ Hence, by Theorem~
\ref{mix_volumes_Schneider}, 
$V(K,L^{v}[k-1],S^v_{k+1},\dots,S^v_n)\neq 0$ for every convex body $K\in\K^n$ with $\dim K\geq 1$. But this is a contradiction with \eqref{eqto0}. Hence, $k=1$ and, for every $v
\in\sfe$ there exist pairwise orthogonal segments $S_2^v,\dots,S_n^v$ such that $\spa\{S_2^v+\dots+S_n^v\}=\spa\{v\}^{\perp}$ and $h(DK,v)=c^vV(K,S_2^v,\dots,S_n^v).$

It remains to compute the constant $c^v$ such that
$h(DK,v)=c^vV(K,S_2^v,\dots,S_n^v).$ For that, we take $S_v=[-v,v]$, $v\in\sfe$, and directly verify that
$$h(DS_v,v)=h(2S_v,v)=2|\langle v,v\rangle|=2.$$
By the multilinearity of mixed volumes and Theorem~\ref{mix_volumes_Schneider},
$$V(S_v,S_2^v,\dots,S_n^v)=V_n(S_v+S_2^v+\dots+S_n^v)=2^n.$$
Hence, $c^v=2^{1-n}$ for every $v\in\sfe$ and the statement holds. 
\end{proof}

\def\cprime{$'$}

\end{document}